\DeclareFontFamily{U}{mathx}{\hyphenchar\font45}
\DeclareFontShape{U}{mathx}{m}{n}{
      <5> <6> <7> <8> <9> <10>
      <10.95> <12> <14.4> <17.28> <20.74> <24.88>
      mathx10
      }{}
\DeclareSymbolFont{mathx}{U}{mathx}{m}{n}
\DeclareMathAccent{\widecheck}{0}{mathx}{"71}
\DeclareMathAccent{\wideparen}{0}{mathx}{"75}
\newtheorem{main-theorem}{Theorem}
\newtheorem{proposition}{Proposition}[section]
\newtheorem{theorem}[proposition]{Theorem}
\newtheorem{corollary}[proposition]{Corollary}
\newtheorem{lemma}[proposition]{Lemma}
\theoremstyle{remark}
\newtheorem{remark}[proposition]{Remark}
\theoremstyle{definition}
\newtheorem{definition}[proposition]{Definition}
\newtheorem*{acknowledgements}{Acknowledgements}
\DeclareMathOperator{\supp}{supp}
\DeclareMathOperator{\sign}{sign}
\DeclareMathOperator*{\esssup}{ess\,sup}
\DeclareMathOperator{\res}{res}
\newcommand{\R}{\mathbb{R}}
\newcommand{\C}{\mathbb{C}}
\newcommand{\Z}{\mathbb{Z}}
\newcommand{\N}{\mathbb{N}}
\newcommand{\Sph}{\mathbb{S}}
\newcommand{\dd}{\mathrm{d}}
\newcommand{\tm}{\mathrm{t}}
\newcommand{\x}{\mathrm{x}}
\newcommand{\Id}{\mathrm{Id}}
\newcommand{\Orth}{\mathrm{O}}
\newcommand{\T}{\mathsf{T}}
\title[Unbounded potentials]{The initial-to-final-state inverse problem with unbounded potentials and Strichartz estimates}
\author{Pedro Caro}
\address{Pedro Caro\\
Ikerbasque \&
Basque Center for Applied Mathematics, Bilbao, Spain\\}
\email{pcaro@bcamath.org}
\author{Alberto Ruiz}
\address{Alberto Ruiz\\
Universidad Aut\'onoma de Madrid, 28049 Cantoblanco, Spain}
\email{alberto.ruiz@uam.es}
\begin{document}

\begin{abstract}
The initial-to-final-state inverse problem consists in determining a 
quantum Hamiltonian assuming the knowledge of the state of the system at some fixed time, for 
every initial state. We formulated this problem to establish a theoretical framework that would explain the viability of data-driven prediction in quantum mechanics.
In a previous work, we analysed this inverse problem for 
Hamiltonians of the form $-\Delta + V$ with an electric potential $V = V(\tm, \x)$, and we showed
that uniqueness holds whenever the potentials are bounded and decay super-exponentially at 
infinity.
In this paper, we extend this result for unbounded potentials.
One of the key steps consists in proving a family of suitable Strichartz estimates---including
the corresponding endpoint of Keel and Tao.

In the context of the inverse Calderón problem this family of inequalities corresponds to the
Carleman inequality proved by Kenig, Ruiz and Sogge. Haberman showed that
this inequality can be also retrieved as an embedding of a suitable Bourgain space. 
The corresponding Bourgain space in our context do not capture the mixed-norm Lebesgue spaces of
Strichartz inequalities. In this paper, we give a counterexample that justifies this fact, and 
shows the limitations of Bourgain spaces to address the initial-to-final-state inverse problem.
\end{abstract}

\date{\today}


\maketitle

\section{Introduction}
The Schrödinger equation determines the evolution of a wave function
\[ u: (t, x) \in [0, T] \times \R^n \longmapsto u(t, x) \in \C \]
throughout an interval of time $[0, T]\subset \R$, with $n \in \N$.
Wave functions provide a mathematical description of 
the dynamical state of quantum particles.

If the motion takes place 
under the influence of an electric potential $V = V(\tm, \x)$ and the initial 
state $u(0,  \centerdot)$ is prescribed by $f$, then the corresponding 
wave function is the solution of
the initial-value problem for the Schr\"odinger equation
\begin{equation}
	\left\{
		\begin{aligned}
		& i\partial_\tm u = - \Delta u + V u & & \textnormal{in} \enspace (0,T) \times \R^n, \\
		& u(0, \centerdot) = f &  & \textnormal{in} \enspace \R^n.
		\end{aligned}
	\right.
	\label{pb:IVP}
\end{equation}

Whenever $V$ has suitable integrability properties one can show that there exists a unique
$u \in C([0,T]; L^2(\R^n))$ solving the problem \eqref{pb:IVP}, and the linear map 
\[ \mathcal{U} : f \in L^2(\R^n) \longmapsto u \in C([0, T]; L^2(\R^n)) \]
is bounded. Solutions of this type are usually referred to as \emph{physical solutions}.

In this context, we formulated in \cite{10.1063/5.0152372} the \textit{initial-to-final-state
inverse problem} aiming at deciding when is possible to determine $V = V(\tm, \x)$ from the 
\textit{initial-to-final-state map}---given by the bounded linear map defined as
\[\mathcal{U}_T : f \in L^2(\R^n) \longmapsto u(T, \centerdot) \in L^2(\R^n)\]
with $u$ denoting the solution of the problem \eqref{pb:IVP}.

The motivation of this inverse problem was to establish 
a theoretical framework that would allow to understand rigorously 
the viability of data-driven prediction in quantum mechanics.
Roughly speaking, data-driven prediction aims at computing an exact or approximative inverse of
\[ \mathcal{U} \longmapsto \mathcal{U}_T \]
ignoring the exact interaction produced by the presence of $V$.


In our previous work \cite{10.1063/5.0152372}, we established that the potential 
$V = V(\tm, \x)$ is uniquely determined by $\mathcal{U}_T$. This result, which implies 
the unique determination of $\mathcal{U}$ itself,
holds for potentials $V \in L^1((0, T); L^\infty (\R^n))$ with $n \geq 2$ that exhibit 
super-exponential decay. Specifically, we required that 
$ e^{\rho |\x|} V \in L^\infty((0, T) \times \R^n) $ for every $\rho > 0$.
The necessity of this strong decay condition stems from the application of specialized 
exponentially-growing solutions in our proof.

The main goal of this paper is to generalize this uniqueness result to the case of 
unbounded potentials. In order to state precisely 
our contributions we need to introduce some notation.
For $R > 0$, let $\mathbf{1}_{>R}$ denote the 
characteristic function of the set $\{ x \in \R^n : |x| > R \}$. For $s \in \R$ and 
$\omega \in \Sph^{n - 1}$, let $H_{s, \omega}$ denote the hyperplane
\[ H_{\omega, s} = \{ x \in \R^n : \omega \cdot x = s \}. \]

\begin{main-theorem}\label{th:uniqueness} \sl
Consider $V_1 $ and $ V_2 $ in $ L^a((0, T); L^b (\R^n))$
with $n \geq 2$ and $(a, b) \in [1, \infty] \times [1, \infty]$
satisfying
\[2 - \frac{2}{a} = \frac{n}{b} \enspace \textnormal{with} \enspace (n, a, b) \neq (2, \infty, 1).\]
For the endpoint $(a, b) = (\infty, n/2)$ with $n \geq 3$, we also suppose that
$V_1 $ and $ V_2 $ belongs to $C ([0, T]; L^{n/2}(\R^n))$.

Let $\mathcal{U}_T^1$ and $\mathcal{U}_T^2$ denote 
the initial-to-final-state map associated to the Hamiltonians
$-\Delta + V_1$ and $-\Delta + V_2$.

Assume that $V_j$  with $j \in \{ 1, 2 \}$ satisfies the following conditions:
\begin{enumerate}[label=\textnormal{(\alph*)}, ref=\alph*]
\item $e^{\rho |\x|} V_j \in L^a((0, T); L^b(\R^n))$ for all $\rho > 0$,
\label{ite:super-exp_decay_non-end}
\item there exists $R_j > 0$ such that
\[ \sup_{\omega \in \Sph^{n-1}} \int_\R \| \mathbf{1}_{>R_j} V_j \|_{L^\infty ((0,T) \times H_{\omega, s})} \, \dd s < \infty.\]
\label{ite:tempered_decay_non-end}
\end{enumerate}

Then,
\[ \mathcal{U}_T^1 = \mathcal{U}_T^2 \, \Rightarrow  \, V_1 = V_2. \] 
\end{main-theorem}

We adopt a unified approach to prove both the case $(a, b) \neq (\infty, n/2)$ and
the endpoint $(a, b) = (\infty, n/2)$ with $n\geq 3$ of the \cref{th:uniqueness}. 
At some point we need to construct a suitable approximation of the potentials. At that stage, we 
must rely on the assumed time continuity at the endpoint.
Nevertheless, this continuity in time is already required to guarantee the 
well-posedness of the initial-value problem \eqref{pb:IVP}, see the work of Ionescu and Kenig
\cite{zbMATH02204588}. An alternative to the continuity assumption, in the context of 
well-posedness, is to impose a smallness condition on 
$\| V \|_{L^\infty ((s, s+\delta); L^{n/2}(\R^n))}$ for some
$\delta > 0$ and all $s \in [0, T-\delta]$, see again \cite{zbMATH02204588}.

As noted earlier, the super-exponential decay condition in \eqref{ite:super-exp_decay_non-end} of
the \cref{th:uniqueness} was originally established in \cite{10.1063/5.0152372}.
This requirement stems from the use of a family of exponentially-growing solutions.
Building on the work of \cite{zbMATH04103631,zbMATH00554321,zbMATH02208390}, one could 
potentially relax this to a weaker exponential decay assumption.
However, it appears unlikely that any form of exponential decay can be entirely removed 
\cite{zbMATH00850580}.

In \cite{10.1088/1361-6420/ae15b8}, Cañizares \textit{et al} restricted their analysis of this
problem to Hamiltonians with time-independent electric potentials $V = V(\x)$.
In that scenario, they showed uniqueness in dimension $n \geq 2$ for bounded integrable 
potentials exhibiting only a super-linear decay at infinity.
This improvement was possible because they avoided the use of exponentially-growing solutions. 
Instead, they relied on the construction of stationary states at different 
energies with an explicit leading term plus a correction term that would vanish as the energy 
grew.

Various inverse problems for the dynamical Schrödinger equation have been previously formulated 
and analysed, see for example 
\cite{zbMATH01886353,zbMATH05549395,zbMATH05655673,zbMATH05839237,zbMATH06733553,zbMATH06864429}). 
Many of these studies even considered time-dependent Hamiltonians 
\cite{zbMATH05379127,zbMATH06516179,zbMATH06769718,zbMATH07033617,zbMATH07242805}.
Despite this extensive body of work, none have addressed an inverse problem capable of tackling 
the data-driven prediction question in quantum mechanics. Typically, these studies examine 
inverse problems where the data consists of a dynamical Dirichlet-to-Neumann map, defined on the 
boundary of a domain that confines the Hamiltonian's non-constant parts. This setup differs 
significantly from our own, as the non-constant part of our Hamiltonians is spread throughout the 
entire space. Furthermore, our approach only requires knowledge of the initial and final states 
as data.


\subsection{General scheme to prove the \cref{th:uniqueness}}
Writing $\Sigma = (0,T)\times\R^n$, we derive from the identity 
$\mathcal{U}_T^1 = \mathcal{U}_T^2$ an orthogonality relation:
\begin{equation}
\int_\Sigma (V_1 - V_2)u_1 \overline{v_2} \, = 0
\label{eq:ortho_intro}
\end{equation}
for all physical solutions $u_1$ and $v_2$
of the equations
\[ (i\partial_\tm + \Delta - V_1) u_1 = 0  \enspace \textnormal{and} \enspace (i\partial_\tm + \Delta - \overline{V_2}) v_2 = 0 \enspace \textnormal{in} \enspace \Sigma. \]

With the identity \eqref{eq:ortho_intro} in mind, we construct solutions of the form
\[ u_1 = e^{i |\nu|^2 \tm - \nu \cdot \x} (u_1^\sharp + u_1^\flat), \qquad v_2 = e^{i |\nu|^2 \tm + \nu \cdot \x} (v_2^\sharp + v_2^\flat)\]
with $\nu \in \R^n \setminus \{ 0 \}$, for the equations 
\[ (i\partial_\tm + \Delta - V_1^{\rm ext}) u_1 = 0 \enspace \textnormal{and} \enspace  (i\partial_\tm + \Delta - \overline{V_2^{\rm ext}}) v_2 = 0 \enspace \textnormal{in} \enspace \R \times \R^n, \]
where $V_j^{\rm ext}$ denotes either the extension by zero outside $\Sigma$ if 
$(a, b) \neq (\infty, n/2)$, or a suitable continuous extension with support in 
$[T^\prime ,T^{\prime \prime}] \times \R^n$
with $T^\prime < 0 < T < T^{\prime \prime}$ if $(a, b) = (\infty, n/2)$ with $n\geq 3$.

These type of exponentially-growing 
solutions are called complex geometrical optics (CGO for short) and will satisfy
\[ \lim_{|\nu| \to \infty} \Big( \Big|\int_\Sigma (V_1 - V_2)u_1^\sharp \overline{v_2^\flat}\, \Big| + \Big| \int_\Sigma (V_1 - V_2)u_1^\flat \overline{v_2^\sharp}\, \Big| + \Big| \int_\Sigma (V_1 - V_2)u_1^\flat \overline{v_2^\flat}\, \Big| \Big) = 0. \]

The functions $ u_1^\sharp $ and $ v_2^\sharp $ will be essentially chosen as $e^{-i |\eta|^2 \tm + i \eta \cdot \x}$ and $e^{-i |\kappa|^2 \tm + i \kappa \cdot \x}$ with 
$\eta, \kappa \in H_{\hat{\nu}}$ where $H_{\hat{\nu}} = H_{\hat{\nu}, 0}$ and 
$\hat{\nu} = \nu / |\nu|$. 

If plugging these CGO solutions 
into the orthogonality relation \eqref{eq:ortho_intro} were allowed, we would conclude, by making $|\nu|$ tend to 
infinity, that
\[\int_\Sigma (V_1 - V_2) e^{-i (|\eta|^2 - |\kappa|^2) \tm - i (\kappa - \eta) \cdot \x} \, = 0 \qquad \forall \eta, \kappa \in H_{\hat{\nu}} .\]
Then, making suitable choices of $\eta$ and $\kappa$, we would prove that
$ V_1 (t, x) = V_2(t, x) $ for  almost every $(t, x) \in \Sigma$.

This approach presents two non-trivial challenges: first to construct the corrections terms 
$u_1^\flat$ and $v_2^\flat$ of the CGO solutions for the Schr\"odinger equation, 
and then showing that the orthogonality relation is satisfied
for CGOs---note that their restrictions to $\Sigma$ grow exponentially 
in certain regions of the space and, in consequence, they are not physical.
These challenges where already addressed in \cite{10.1063/5.0152372} for the case of bounded
potentials, however, the extension to the unbounded case is highly non trivial.

In order to construct the correction term $u^\flat$ of a CGO in the form
\[u = e^{i|\nu|^2 \tm + \nu \cdot \x} (u^\sharp + u^\flat)\]
satisfying the Schrödinger equation 
\[(i\partial_\tm + \Delta - V) u = 0 \enspace \textnormal{in} \enspace \R \times \R^n,\]
one needs to solve
\[(i\partial_\tm + \Delta + 2 \nu \cdot \nabla - V) u^\flat = V u^\sharp \enspace \textnormal{in} \enspace \R \times \R^n.\]

As in \cite{10.1063/5.0152372}, a key step consists of proving suitable estimates for the 
Fourier multiplier $S_\nu$ defined so that if $u = S_\nu f $ with
$f \in \mathcal{S}(\R \times \R^n)$ then
\[ (i\partial_\tm + \Delta + 2 \nu \cdot \nabla) u = f \enspace \textnormal{in} \enspace \R \times \R^n. \]
In our previous work, we proved an inequality for $S_\nu$ with a gain of $|\nu|^{1/2}$ that 
allowed to deal with the bounded case, see \cite[inequality (27)]{10.1063/5.0152372}. 

In order to extend our analysis for unbounded potentials, we need to estimate 
$S_\nu$ in mixed-norm Lebesgue spaces. This requires to exploit properties of the symbol
of $S_\nu$ that were completely ignored in \cite{10.1063/5.0152372}. The types of estimates 
we need here are exact analogues of the classical Strichartz inequalities for the free 
Schrödinger propagator---including the endpoint of Keel and Tao \cite{zbMATH01215570}.

In the same way as classical Strichartz estimates, our 
family of inequalities is scale invariant and therefore it is independent of $|\nu|$.
This forces us to use not only the analogues of Strichartz estimates
but also another inequality with certain gain in $|\nu|$.


While the estimate in \cite[inequality (27)]{10.1063/5.0152372} provides a gain and is an
obvious candidate, we prove a stronger result here. Our improved version achieves the full power $|\nu|$, which seems the natural gain for this problem.

In order to be able to gather the gain in $|\nu|$ with the versatility of our Strichartz 
inequalities for $S_\nu$, we use a trick due to Lavine and Nachman.

It might be convenient for the
reader to mention here that the construction of the reminder term $u^\flat$ only requires a 
tempered decay for the potential, actually one only needs the decay stated in 
\eqref{ite:tempered_decay_non-end} of the \cref{th:uniqueness}.


The second challenge we address is extending the orthogonality relation in 
\eqref{eq:ortho_intro} to exponentially-growing solutions, as CGOs. While this 
constitutes a technical step, its core ideas are adaptations from our previous work. 

It is important to emphasize that the super-exponential decay condition
\eqref{ite:super-exp_decay_non-end} of the \cref{th:uniqueness}
is specifically required for this extension to admit CGO solutions.

\subsection{Bourgain spaces for the initial-to-final-state inverse problem}
The family of Strichartz inequalities required to deal with unbounded potentials in
our context corresponds, in the framework of the Calderón problem, to the Carleman inequality 
proved by Kenig, Ruiz and Sogge \cite{zbMATH04050093}---only for the scale-invariant situation.

Haberman showed using Stein--Tomas that
this inequality can be also retrieved as an embedding of a suitable Bourgain space 
\cite{zbMATH06490961}. This Bourgain
space also recovers the classical Sylvester--Uhlmann inequality \cite{zbMATH04015323},
which has the same role as our inequality with a gain of $|\nu|$. 

Therefore, the corresponding Bourgain space already gathers
the gain of the Sylvester--Uhlmann inequality with the versatility of Kenig--Ruiz--Sogge.
This provides an alternative path to avoid the Lavine--Nachman trick
when working on the Calderón problem. 

Unfortunately, this alternative approach
is not available for the initial-to-final-state inverse problem.
In fact, the corresponding Bourgain space in our context do not capture the mixed-norm Lebesgue 
spaces of Strichartz inequalities.

In this paper, we construct a counterexample that shows that
the corresponding Bourgain spaces is not embedded in the mixed-norm Lebesgue 
spaces appearing in the Strichartz inequalities. This highlights some obvious limitations
of Bourgain spaces in the context of the initial-to-final-state inverse problem.

\subsection{Contents} This paper consists of other six sections plus an appendix with two sections. In the \cref{sec:CGO} we construct the CGO solutions assuming several
inequalities that will be proved in the \cref{sec:boundedness,sec:BSch_op}. In the
\cref{sec:boundedness} we prove the Strichartz inequalities for $S_\nu$ as well as the
one with the full gain of $|\nu|$. The \cref{sec:BSch_op} is devoted to prove a bound
for the Birman--Schwinger operator which is key to make the Lavine--Nachman trick work.
In the \cref{sec:identical_hamiltonians} we first prove the orthogonality relation 
\eqref{eq:ortho_intro}, and then we extend it to exponentially-growing solutions. This extension
requires two results that are straightforward generalizations of our previous work, for that 
reason, these are postponed to the \cref{app:integration_by_parts,app:transference}.
The section \ref{sec:uniqueness}
is devoted to prove the \cref{th:uniqueness}.
In the \cref{sec:bourgain} we compare the inverse Calderón problem with the
initial-to-final-state inverse problem,
and we exhibit the counterexample that prevents us of using Bourgain spaces to
avoid the Lavine--Nachman trick.

\section{Complex geometrical optics solutions}\label{sec:CGO}

In this section we construct the exponentially-growing solutions that will be plugged into
the orthogonality relation \eqref{eq:ortho_intro}. We start by a preliminary discussion
that motivates the adopted approach. The arguments presented here are based on several
key inequalities that will be proved in the \cref{sec:boundedness,sec:BSch_op}.

\subsection{Preliminary discussion}\label{sec:pre-discussion}
We follow the construction of CGO 
carried out in \cite{10.1063/5.0152372}. As 
motivated in there, given a suitable potential $V$ defined in $\R \times \R^n$,
we look for solutions of
\[ 
(i\partial_\tm + \Delta - V) u = 0 \enspace \textnormal{in} \enspace \R \times \R^n, \]
in the form
\[ 
u = e^\varphi (u^\sharp + u^\flat), \]
where
\begin{equation}
\label{def:complex_phase}
\varphi(t,x) = i |\nu|^2 t + \nu \cdot x \qquad \forall (t, x) \in \R \times \R^n
\end{equation}
with $\nu \in \R^n \setminus \{ 0 \}$, and
$u^\sharp$ chosen so that $e^\varphi u^\sharp$ is a solution of 
$(i\partial_\tm + \Delta) (e^\varphi u^\sharp) = 0$ in $\R \times \R^n$,
or equivalently $(i\partial_\tm + \Delta + 2 \nu \cdot \nabla) u^\sharp = 0 $ in
$\R \times \R^n$. These choices force $u^\flat$ to satisfy
\begin{equation}
(i\partial_\tm + \Delta + 2 \nu \cdot \nabla - V) u^\flat = V u^\sharp \enspace \textnormal{in} \enspace \R \times \R^n.
\label{eq:remainder-equation}
\end{equation}

At this point, we will guess the mixed-norm Lebesgue spaces where the potential $V$
might belong to, in order for us, to construct $u^\flat$ solving 
\eqref{eq:remainder-equation}. To do so, we need to introduce the Fourier multiplier 
$S_\nu$ such that if $u = S_\nu f $ with
$f \in \mathcal{S}(\R \times \R^n)$ then
\begin{equation}
\label{eq:constant_coefficients}
(i\partial_\tm + \Delta + 2 \nu \cdot \nabla) u = f \enspace \textnormal{in} \enspace \R \times \R^n.
\end{equation}

We could construct $u^\flat$ as a solution for the equation
\begin{equation}
\label{eq:Neumann}
(\Id - S_\nu \circ M_V ) u^\flat =  S_\nu (V u^\sharp),
\end{equation}
where $M_V $ denotes the operator 
\[ M_V : u \longmapsto V u \]
---since applying the differential 
operator $i\partial_\tm + \Delta + 2 \nu \cdot \nabla$ to both sides of the equation 
\eqref{eq:Neumann} we would see that $u^\flat$ satisfies the identity 
\eqref{eq:remainder-equation}.

A way to solve \eqref{eq:Neumann} consists in finding 
a Banach space where the operator $S_\nu \circ M_V$ is a contraction. This drives us to 
the need of understanding the boundedness of $S_\nu$ in spaces that capture the behaviour
of $V$ that we want to allow.

In the \cref{sec:boundedness}, we prove that, for
$n \in \N$ and $(q, r) \in [1, 2] \times [1, 2]$ satisfying
\begin{equation}
\label{id:Strichartz_indeces}
2 - \frac{2}{q} = \frac{n}{r} - \frac{n}{2} \enspace \textnormal{with} \enspace (n, q, r) \neq (2, 2, 1),
\end{equation}
the Fourier multiplier $S_\nu$ is a bounded operator from the mixed-norm Lebesgue space
$L^q (\R; L^r (\R^n))$ to 
$L^{q^\prime} (\R; L^{r^\prime} (\R^n))$ with a norm that is less or equal than a 
constant $C$ that only depends on $n$ and $(q, r)$---see the \cref{th:non-gain_Snu}.

Here $(q^\prime, r^\prime) \in [2, \infty] \times [2, \infty]$ is the pair 
the conjugate exponents of $q$ and $r$, and satisfies the relation
\begin{equation}
\label{id:Strichartz_dual-indeces}
\frac{2}{q^\prime} = \frac{n}{2} - \frac{n}{r^\prime} \enspace \textnormal{with} \enspace (n, q^\prime, r^\prime) \neq (2, 2, \infty).
\end{equation}

Consequently, the operator $S_\nu \circ M_V$ is bounded on $L^{q^\prime} (\mathbb{R}; L^{r^\prime} (\mathbb{R}^n))$, provided that $M_V$ is bounded from $L^{q^\prime} (\mathbb{R}; L^{r^\prime} (\mathbb{R}^n))$ to $L^{q} (\mathbb{R}; L^{r} (\mathbb{R}^n))$. Then, if
$V \in L^a (\R; L^b (\R^n))$ with 
$(a, b) \in [1, \infty] \times [1, \infty]$ satisfying
\begin{equation}
\label{id:indices4Vab}
\frac{1}{q} - \frac{1}{q^\prime} = \frac{1}{a} \enspace \textnormal{and} \enspace \frac{1}{r} - \frac{1}{r^\prime} = \frac{1}{b}
\end{equation}
one can check---by Hölder's inequality---that $M_V$ is bounded from
$L^{q^\prime} (\R; L^{r^\prime} (\R^n))$ to $L^q (\R; L^r (\R^n))$.

Additionally, one can see
that such $(a, b) \in [1, \infty] \times [1, \infty]$ has to satisfy the relation
\begin{equation}
\label{cond:V_ab} 
2 - \frac{2}{a} = \frac{n}{b} \enspace \textnormal{with} \enspace (n, a, b) \neq \big(2, \infty, 1 \big).
\end{equation}
%

Furthermore, if its norm 
\[ \| M_V \|_{\mathcal{L}(L^{q^\prime} (\R; L^{r^\prime} (\R^n)); L^q (\R; L^r (\R^n)))} = \|  V \|_{L^a (\R; L^b (\R^n))} < 1/C\]
---with $C$ an upper bound for the norm of $S_\nu$,
then $S_\nu \circ M_V$ is a contraction 
on the space $L^{q^\prime} (\R; L^{r^\prime} (\R^n))$. Hence,
we can construct an inverse for
$\Id - S_\nu \circ M_V$.

However, in order to solve \eqref{eq:Neumann} we need to ensure
that $S_\nu (V u^\sharp) \in L^{q^\prime} (\R; L^{r^\prime} (\R^n))$. By the boundedness
of $S_\nu$ from $L^q (\R; L^r (\R^n))$ to $L^{q^\prime} (\R; L^{r^\prime} (\R^n))$,
we need to guarantee that $V u^\sharp \in L^q (\R; L^r (\R^n))$.

In our previous work \cite{10.1063/5.0152372}, $u^\sharp$ was chosen so that
$(i\partial_\tm + \Delta) u^\sharp = 0 $ and $ \nu \cdot \nabla  u^\sharp = 0$ in
$\R \times \R^n$. Assuming that $V$ is supported in $\overline{\Sigma}$ and it has an extra 
decay, we have by H\"older's inequality that
\begin{align*}
\| V u^\sharp \|_{L^q (\R; L^r (\R^n))} & \leq \| \langle \hat{\nu} \cdot \x \rangle^\rho V \|_{L^a (\R; L^b (\R^n))} 
\| \langle \hat{\nu} \cdot \x \rangle^{-\rho} u^\sharp \|_{L^{q^\prime} ((0,T); L^{r^\prime} (\R^{n}))}\\
& \leq \| \langle \hat{\nu} \cdot \x \rangle^\rho V \|_{L^a (\R; L^b (\R^n))} 
\| \langle \centerdot \rangle^{-\rho} \|_{L^{r^\prime} (\R)} \| u^\sharp \|_{L^{q^\prime} ((0,T); L^{r^\prime} (H_{\hat{\nu}}))},
\end{align*}
where
$\langle \hat{\nu} \cdot \x \rangle = (1 + |\hat{\nu} \cdot \x|^2)^{1/2}$ and 
$\rho > 1/r^\prime$.

A weight as $\langle \hat{\nu} \cdot \x \rangle^{-\rho}$ has to be introduced because
$u^\sharp$ does not depend on $ \hat{\nu} \cdot \x $. Moreover, this is enough
since $u^\sharp$ can be chosen to have this behaviour on $(0, T) \times H_{\hat{\nu}}$---for 
further details see the proof of the \cref{th:CGO}.
Thus, we would have 
$V u^\sharp \in L^q (\R; L^r (\R^n))$.

This argument would allows us to 
solve \eqref{eq:Neumann} under the conditions
\[\sup_{\omega \in \Sph^{n-1}}\| \langle \omega \cdot \x \rangle^\rho V \|_{L^a (\R; L^b (\R^n))} < \infty,\]
and $\|  V \|_{L^a (\R; L^b (\R^n))}$ is sufficiently small.

Fortunately, using a trick that involves the Birman--Schwinger operator we will be able to
solve \eqref{eq:Neumann} for potentials $V$ supported in $\overline{\Sigma}$ so that
$\|  V \|_{L^a (\R; L^b (\R^n))}$ is arbitrary and
$\sup_{\omega \in \Sph^{n-1}} \| \mathbf{1}_{>R} \langle \omega \cdot \x \rangle^\rho V \|_{L^\infty (\R \times \R^n)} < \infty$
for some $R>0$ and $\rho>1$.

In order to make this trick work, we will need the boundedness properties of $S_\nu$ stated in
the \cref{th:non-gain_Snu} as well as the one in the \cref{th:gain_Snu}.

\subsection{The Lavine--Nachman trick}
Assume for a moment that we have chosen $u^\sharp$ so that 
$(i\partial_\tm + \Delta + 2 \nu \cdot \nabla) u^\sharp = 0 $ 
in $\R \times \R^n$ with $\nu \in \R^n \setminus\{ 0 \}$, we look for $u^\flat$ solving
\begin{equation}
\label{eq:remainder-equation_LNtrick}
(i\partial_\tm + \Delta + 2 \nu \cdot \nabla - M_V) u^\flat = M_V u^\sharp \enspace \textnormal{in} \enspace \R \times \R^n.
\end{equation}
In order to solve this equation, we will use a trick due to Lavine and Nachman 
that we learnt in \cite{zbMATH06156446}.

Start by observing that
\[ (i\partial_\tm + \Delta + 2 \nu \cdot \nabla - M_V) \circ S_\nu \circ M_{|V|^{1/2}} = M_{|V|^{1/2}} - M_V \circ S_\nu \circ M_{|V|^{1/2}}. \]
Then, defining
\begin{equation}
\label{def:W}
W: (t,x) \in \R \times \R^n \longmapsto \left\{
\begin{aligned}
& \frac{V(t,x)}{|V(t,x)|^{1/2}} & V(t,x)\neq 0\\
& \qquad 0 & V(t,x)=0
\end{aligned}
\right. 
\end{equation}
and using that $|V|^{1/2} = |W|$ and $V = |W| W$, we can write
\[ (i\partial_\tm + \Delta + 2 \nu \cdot \nabla - M_V) \circ S_\nu \circ M_{|V|^{1/2}} = M_{|W|} \circ ( \Id - M_W \circ S_\nu \circ M_{|W|}). \]

Then, $u^\flat$ in the form $u^\flat = S_\nu (|W| v)$ with $v$ satisfying
\begin{equation}
\label{eq:v_L^2}
( \Id - M_W \circ S_\nu \circ M_{|W|}) v = M_W u^\sharp
\end{equation}
solves the equation \eqref{eq:remainder-equation_LNtrick}. Indeed,
\[ (i\partial_\tm + \Delta + 2 \nu \cdot \nabla - M_V) S_\nu (|W| v) = M_{|W|} \circ ( \Id - M_W \circ S_\nu \circ M_{|W|}) v = M_{|W|} \circ M_W u^\sharp. \]
Since $M_{|W|} \circ M_W = M_V$, we have that $u^\flat = S_\nu (|W| v)$ solves 
\eqref{eq:remainder-equation_LNtrick}.

Therefore, our goal is to find $v$ solving \eqref{eq:v_L^2}. To that end, we 
will show that the Birman--Schwinger operator 
$M_W \circ S_\nu \circ M_{|W|}$ is a contraction in $L^2(\R \times \R^n)$ whenever $|\nu|$ is
sufficiently large. In the \cref{prop:MSM}, we provide conditions on $W$ that guarantee that
\[ \lim_{|\nu| \to \infty} \| M_W \circ S_\nu \circ M_{|W|} \|_{\mathcal{L}[L^2(\R \times \R^n)]} = 0. \]

Thus, for $|\nu|$ sufficiently large, the operator $\Id - M_W \circ S_\nu \circ M_{|W|}$
has a bounded inverse in $L^2(\R \times \R^n)$, and $v$ can be chosen as
\[v = ( \Id - M_W \circ S_\nu \circ M_{|W|})^{-1} (W u^\sharp). \]

In the rest of the section we state the conditions on $V$ that make it possible,
first, to invert $\Id - M_W \circ S_\nu \circ M_{|W|}$ and, second, to construct
$u^\flat$.

\begin{proposition}\label{prop:LNtrick}\sl
Consider $S, T \in \R$ such that $S<T$ and $V \in L^a(\R; L^b (\R^n))$ with
$\supp V \subset [S, T]\times \R^n$ and $(a, b) \in [1, \infty] \times [1, \infty]$ 
satisfying \eqref{cond:V_ab}.

Assume that there is $R > 0$ so that
\begin{equation}
\label{cond:decay_lines_for_V}
\sup_{\omega \in \Sph^{n-1}} \int_\R \| \mathbf{1}_{>R} V \|_{L^\infty (\R \times H_{\omega, s})} \, \dd s < \infty,
\end{equation}
and $\mathbf{1}_{\leq R} V \in C(\R; L^{n/2} (\R^n))$ if $(a, b) = (\infty, n/2)$ for
$n\geq 3$.

Then, there exists a constant $c > 0$ that only depends on $n$, $(a,b)$ and $R$ as well as on
$\| V \|_{L^a (\R; L^b(\R^n))}$ and the quantity in \eqref{cond:decay_lines_for_V}, such that 
for every $\nu \in \R^n$ with $|\nu| \geq c$ the operator
\[ \Id - M_W \circ S_\nu \circ M_{|W|},\]
with $W$ as in \eqref{def:W}, has a bounded inverse in $L^2(\R\times \R^n)$.
\end{proposition}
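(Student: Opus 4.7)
The plan is to reduce the invertibility problem to the operator-norm smallness statement of \cref{prop:MSM}, and then to invoke the Neumann series for $\mathrm{Id} - A_\nu$, where I write $A_\nu := M_W \circ S_\nu \circ M_{|W|}$.

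First, I would verify that the hypotheses on $V$ transfer to the hypotheses of \cref{prop:MSM} on $W$. Because $|W|^2 = |V|$ and $V = |W| W$, the assumption $V \in L^a(\R; L^b(\R^n))$ with $(a,b)$ Strichartz-admissible in the sense of \eqref{cond:V_ab} becomes $W \in L^{2a}(\R; L^{2b}(\R^n))$, and the support of $W$ coincides with that of $V$, contained in $[S,T]\times\R^n$. The hyperplane decay hypothesis \eqref{cond:decay_lines_for_V} then yields
\[ \sup_{\omega \in \Sph^{n-1}} \int_\R \|\mathbf{1}_{>R}\, W\|_{L^\infty(\R \times H_{\omega,s})}^2 \,\dd s < \infty \]
by the pointwise identity $|W|^2 = |V|$. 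In the endpoint case $(a,b) = (\infty, n/2)$ with $n\geq 3$, the continuity-in-time assumption on $\mathbf{1}_{\leq R} V$ in $L^{n/2}(\R^n)$ propagates to continuity of the corresponding truncation of $|W|^2$ in $L^{n/2}(\R^n)$, since the square-root map is continuous and the support restriction allows a clean control.

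Once the hypotheses are in place, \cref{prop:MSM} furnishes
\[ \lim_{|\nu| \to \infty} \|A_\nu\|_{\mathcal{L}[L^2(\R\times\R^n)]} = 0, \]
with a convergence rate that depends only on $n$, $(a,b)$, $R$, $\|V\|_{L^a(\R; L^b)}$ and the quantity in \eqref{cond:decay_lines_for_V}. I pick $c > 0$, depending on the same quantities, such that $\|A_\nu\|_{\mathcal{L}[L^2]} \leq 1/2$ for every $\nu \in \R^n$ with $|\nu| \geq c$. For such $\nu$, the Neumann series
\[ (\mathrm{Id} - A_\nu)^{-1} = \sum_{k=0}^\infty A_\nu^k \]
converges absolutely in $\mathcal{L}[L^2(\R\times\R^n)]$ and provides the desired bounded inverse, with operator norm bounded by $2$.

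The main obstacle is the endpoint case $(a,b) = (\infty, n/2)$: away from the endpoint, a truncation argument combined with H\"older on the hyperplane-decay condition suffices to produce smallness of $A_\nu$, but at the endpoint no small Lebesgue parameter is left to exploit. This is precisely where the continuity-in-time assumption enters decisively; it is what allows one to approximate $V$ by nicer potentials uniformly in $t$, and thus it underlies the vanishing of $\|A_\nu\|$ stated in \cref{prop:MSM}. All of these considerations live inside the proof of \cref{prop:MSM}; here the only task is to check that its hypotheses are met and to close the bootstrap with a Neumann series.
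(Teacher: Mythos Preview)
Your proposal is correct and follows exactly the paper's approach: verify that the hypotheses of \cref{prop:MSM} are met, deduce that the Birman--Schwinger operator norm tends to zero, and close with a Neumann series. One small imprecision: in the endpoint case, \cref{prop:MSM} requires $\mathbf{1}_{\leq R} W \in C(\R; L^n(\R^n))$, not merely $|W|^2 \in C(\R; L^{n/2})$; the paper handles this with a separate lemma showing $V \in C(\R; L^{n/2}) \Rightarrow W \in C(\R; L^n)$, which controls the phase $V/|V|$ in addition to the modulus $|V|^{1/2}$.
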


Before proving this proposition,
we note that while $|W|^2 \in L^a(\R; L^b (\R^n))$, 
we must also ensure that, at the endpoint, the time continuity
of $V$ is inherited by $W$.

\begin{lemma}\sl
Consider $V \in L^\infty (\R; L^{n/2} (\R^n))$ such that 
$ V \in C(\R; L^{n/2} (\R^n))$. Then, $ W $ defined as in \eqref{def:W} belongs to 
$ C(\R; L^n (\R^n))$.
\end{lemma}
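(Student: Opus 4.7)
The plan is to reduce the statement to a pointwise $\tfrac12$-Hölder bound for the scalar map $g\colon\C\to\C$ defined by $g(z)=z|z|^{-1/2}$ for $z\neq 0$ and $g(0)=0$, namely
\[
|g(z_1)-g(z_2)|\leq C\,|z_1-z_2|^{1/2}\qquad\textnormal{for every }z_1,z_2\in\C,
\]
with some absolute constant $C$. Granting this, since $W=g\circ V$ pointwise and $|g(z)|^2=|z|$, one has $\|W(t,\centerdot)\|_{L^n(\R^n)}^{2}=\|V(t,\centerdot)\|_{L^{n/2}(\R^n)}$ for every $t\in\R$, so $W(t,\centerdot)\in L^n(\R^n)$. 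Raising the pointwise Hölder inequality to the $n$-th power and integrating in $x$ yields
\[
\|W(t,\centerdot)-W(s,\centerdot)\|_{L^n(\R^n)}\leq C\,\|V(t,\centerdot)-V(s,\centerdot)\|_{L^{n/2}(\R^n)}^{1/2},
\]
and the hypothesis $V\in C(\R;L^{n/2}(\R^n))$ then immediately gives $W\in C(\R;L^n(\R^n))$.

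The one genuine step is the pointwise Hölder bound for $g$, and it is the main obstacle, because $|\nabla g(z)|\sim|z|^{-1/2}$ diverges as $z\to 0$ and so a naive mean value argument fails. I would obtain it by a direct algebraic computation in polar coordinates: writing $z_j=r_j e^{i\theta_j}$ with $r_j\geq 0$, one computes
\[
|g(z_1)-g(z_2)|^2=\bigl(r_1^{1/2}-r_2^{1/2}\bigr)^2+4\sqrt{r_1 r_2}\,\sin^2\!\tfrac{\theta_1-\theta_2}{2},
\]
\[
|z_1-z_2|^2=(r_1-r_2)^2+4\,r_1 r_2\sin^2\!\tfrac{\theta_1-\theta_2}{2}.
\]
Squaring the first expression and applying $(A+B)^2\leq 2(A^2+B^2)$ together with the elementary bounds $\bigl(r_1^{1/2}-r_2^{1/2}\bigr)^4\leq(r_1-r_2)^2$ (which is just $(r_1^{1/2}-r_2^{1/2})^2\leq(r_1^{1/2}+r_2^{1/2})^2$) and $\sin^4\leq\sin^2$ will give $|g(z_1)-g(z_2)|^4\leq 8\,|z_1-z_2|^2$, which is the desired Hölder estimate with an explicit constant.

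An alternative, geometric route that I would keep in reserve is a two-case split on whether $\min(|z_1|,|z_2|)$ is comparable to $|z_1-z_2|$ or much larger: in the near-origin case one uses the triangle inequality and $|g(z)|=|z|^{1/2}$, while in the far case the segment between $z_1$ and $z_2$ avoids a neighbourhood of the origin, so integrating $|\nabla g|\lesssim|z|^{-1/2}$ along it gives the bound. With the pointwise estimate in hand, the $L^n$ integration and the continuity conclusion are routine, so no further obstacles are anticipated.
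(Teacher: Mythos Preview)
Your proposal is correct and follows essentially the same strategy as the paper: both arguments reduce to a pointwise $\tfrac12$-Hölder bound for the scalar map $z\mapsto z|z|^{-1/2}$ and then integrate in $x$ to get $\|W(t,\centerdot)-W(s,\centerdot)\|_{L^n}\lesssim\|V(t,\centerdot)-V(s,\centerdot)\|_{L^{n/2}}^{1/2}$.

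The only difference is in how the pointwise bound is obtained. The paper uses a phase--amplitude decomposition, writing $W(t,x)-W(s,x)$ as a phase-difference term times $|V(t,x)|^{1/2}$ plus a term with the amplitude difference $|V(t,x)|^{1/2}-|V(s,x)|^{1/2}$, and bounds each piece separately (using $|a-b|^2\leq|a^2-b^2|$ for the amplitude part). This forces a mild case split on whether $W(s,\centerdot)$ vanishes. Your polar-coordinate computation is a single clean identity that covers all cases at once, including $r_1=0$ or $r_2=0$, and delivers the explicit constant $8^{1/4}$; it is a somewhat tidier execution of the same idea.
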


\begin{proof}
Consider $s \in \R$ such that $W(s, \centerdot) \neq 0$. Then, 
$V(t, \centerdot) \neq 0$ for all $t$ sufficiently close to $s$.
By \eqref{def:W}, we have that $W(t, \centerdot) \neq 0$
for all $t$ sufficiently close to $s$.
Then, for almost every $x \in \R^n$ we have that
\begin{align*}
W (t, & x) - W(s, x) = \frac{V(t,x)}{|V(t,x)|} |V(t,x)|^{1/2} - \frac{V(s,x)}{|V(s,x)|} |V(s,x)|^{1/2} \\
& = \Big( \frac{V(t,x)}{|V(t,x)|} - \frac{V(s,x)}{|V(s,x)|} \Big)|V(t,x)|^{1/2} + \frac{V(s,x)}{|V(s,x)|} \big(|V(t,x)|^{1/2} - |V(s,x)|^{1/2}\big).
\end{align*}
Hence,
\begin{align*}
\| W (t,  \centerdot) - W(s, \centerdot) \|_{L^n(\R^n)} & \leq \Big\| \Big| \frac{V(t,\centerdot)}{|V(t,\centerdot)|} - \frac{V(s,\centerdot)}{|V(s,\centerdot)|} \Big|^2 |V(t,\centerdot)| \Big\|_{L^{n/2}(\R^n)}^{1/2} \\
& \qquad + \big\| \big||V(t,\centerdot)|^{1/2} - |V(s,\centerdot)|^{1/2}\big|^2 \big\|_{L^{n/2}(\R^n)}^{1/2}.
\end{align*}

On the one hand, observe that for almost every $x \in \R^n$ we have
\begin{align*}
& \Big| \frac{V(t,x)}{|V(t,x)|} - \frac{V(s,x)}{|V(s,x)|} \Big|^2 |V(t,x)| \leq 2 \Big| \frac{V(t,x)}{|V(t,x)|} - \frac{V(s,x)}{|V(s,x)|} \Big| |V(t,x)| \\
& \leq 2\big( |V(t,x) - V(s,x)| + \big| |V(s,x)| - |V(t,x)| \big| \big) \leq 4 |V(t,x) - V(s,x)|
\end{align*}
for almost every $x \in \R^n$. Hence,
\[ \Big\| \Big| \frac{V(t,\centerdot)}{|V(t,\centerdot)|} - \frac{V(s,\centerdot)}{|V(s,\centerdot)|} \Big|^2 |V(t,\centerdot)| \Big\|_{L^{n/2}(\R^n)}^{1/2} \leq 2  \| V(t,\centerdot) - V(s,\centerdot) \|_{L^{n/2}(\R^n)}^{1/2} \]

On the other hand,
since $|a - b|^2 \leq |a^2 - b^2|$ for all positive constants $a$ and $b$, we have
\begin{align*}
\big||V(t,x)|^{1/2} - |V(s,x)|^{1/2}\big|^2 \leq \big| |V(s,x)| - |V(t,x)| \big| \leq |V(t,x) - V(s,x)|.
\end{align*}
Hence
\[ \big\| \big||V(t,\centerdot)|^{1/2} - |V(s,\centerdot)|^{1/2}\big|^2 \big\|_{L^{n/2}(\R^n)}^{1/2} \leq \| V(t,\centerdot) - V(s,\centerdot) \|_{L^{n/2}(\R^n)}^{1/2}. \]

The inequality $|a - b|^2 \leq |a^2 - b^2|$ follows from
\[ (a - b)^2 + b^2 \leq (a - b)^2 + b^2 + 2 (a - b)b = a^2 \]
in the case $a\geq b \geq 0$.

Gathering these inequalities we have that
\[\| W (t,  \centerdot) - W(s, \centerdot) \|_{L^n(\R^n)} \leq 3 \| V(t,\centerdot) - V(s,\centerdot) \|_{L^{n/2}(\R^n)}^{1/2}, \]
which means that $t \in \R \mapsto W(t, \centerdot) \in L^n(\R^n) $ is continuous at $s$,
by the continuity of $t \in \R \mapsto V(t, \centerdot) \in L^{n/2}(\R^n) $.

In order to prove the lemma, it remains to see what happens in the case that 
$W(s, \centerdot) = 0$. Note that
\begin{align*}
\| W (t,  \centerdot) - W(s, \centerdot) \|_{L^n(\R^n)} &= \| W (t,  \centerdot) \|_{L^n(\R^n)} = \| V (t,  \centerdot) \|_{L^{n/2}(\R^n)}^{1/2} \\
&= \| V (t,  \centerdot) - V(s, \centerdot) \|_{L^{n/2}(\R^n)}^{1/2}
\end{align*}
since $V(s, \centerdot) = 0$. Hence, again by the continuity of 
$t \in \R \mapsto V(t, \centerdot) \in L^{n/2}(\R^n) $, we can conclude that 
$t \in \R \mapsto W(t, \centerdot) \in L^n(\R^n) $ is 
continuous at $s$ even when $W(s, \centerdot) = 0$.
\end{proof}

\begin{proof}[Proof of the \cref{prop:LNtrick}]
The fact $ \Id - M_W \circ S_\nu \circ M_{|W|}$
has a bounded inverse in $L^2(\R\times \R^n)$ for every $\nu \in \R^n$ with $|\nu|$ 
sufficiently large, is a consequence of
the Neumann-series theorem together with the fact that the Birman--Schwinger operator 
$M_W \circ S_\nu \circ M_{|W|}$ is a contraction in $L^2(\R \times \R^n)$---see the
\cref{prop:MSM}.
\end{proof}

We finish this section by stating the theorem which establishes the existence and properties
of the CGO solutions $u = e^\varphi (u^\sharp + u^\flat)$.
In order to present our choice for $u^\sharp$, it is convenient make some comments.

For $s \in \R$ and 
$\omega \in \Sph^{n - 1}$, the hyperplane 
$ H_{\omega, s} = \{ x \in \R^n : \omega \cdot x = s \} $
is endowed with the measure $\sigma_{\omega, s}$. This measure is
defined as the push-forward of the 
Lebesgue measure in $\R^{n-1}$ via the map
\[ y^\prime \in \R^{n-1} \longmapsto y=(y^\prime, s) \in \R^{n-1} \times \{ s \} \longmapsto Q y \in H_{\omega, s}, \]
were $Q$ is any matrix in $\Orth(n)$ such that $\omega = Q e_n$ with 
$\{ e_1, \dots , e_n \}$ the standard basis of $\R^n$. The definition of 
$\sigma_{\omega, s}$ is 
independent of the choice of $Q$. We then denote by $L^p(H_{\omega, s})$, 
for $p \in [1, \infty]$, the usual Lebesgue spaces on $H_{\omega, s}$ with respect to the 
measure $\sigma_{\omega, s}$.

Now we introduce our choice of $u^\sharp$. For $\psi \in \mathcal{S} (H_{\hat{\nu}})$, we define
\begin{equation}
\label{id:ushrap}
u^\sharp (t, x) = \frac{1}{(2\pi)^\frac{n-1}{2}} \int_{H_{\hat{\nu}}} e^{ix \cdot \xi} e^{-i t |\xi|^2} {\psi}(\xi) \, \dd \sigma_{\hat{\nu}} (\xi) \quad \forall (t, x) \in \R \times \R^n,
\end{equation}
where $\sigma_{\hat{\nu}} = \sigma_{\hat{\nu}, 0}$.

\begin{theorem}\label{th:CGO}\sl
Consider $T>0$ and $V \in L^a((0,T); L^b (\R^n))$ with $(a, b) \in [1, \infty] \times [1, \infty]$ 
satisfying \eqref{cond:V_ab}.

Assume that there is $R > 0$ so that
\begin{equation}
\label{cond:decay_lines_for_V_(0,T)}
\sup_{\omega \in \Sph^{n-1}} \int_\R \| \mathbf{1}_{>R} V \|_{L^\infty ((0,T) \times H_{\omega, s})} \, \dd s < \infty,
\end{equation}
and $\mathbf{1}_{\leq R} V \in C([0,T]; L^{n/2} (\R^n))$ if $(a, b) = (\infty, n/2)$ for
$n\geq 3$.

For $\nu \in \R^n \setminus \{ 0 \}$ and $\psi \in \mathcal{S}(H_{\hat{\nu}})$,
consider $\varphi$ as in \eqref{def:complex_phase} and $u^\sharp$ as in \eqref{id:ushrap}.

Then, there is a positive 
constant $c > 0$ that only depends on $n$, $(a,b)$ and $R$ as well as on
$\| V \|_{L^a ((0,T); L^b(\R^n))}$ and the quantity in
\eqref{cond:decay_lines_for_V_(0,T)}, such that for every $\nu \in \R^n$, with 
$|\nu| \geq c$, and every $\psi \in \mathcal{S}(H_{\hat{\nu}})$
there exists $u^\flat \in L^{q^\prime} (\R; L^{r^\prime} (\R^n))$ so that
\[ u = e^\varphi (u^\sharp + u^\flat) \]
solves the equation
\[ i\partial_\tm u = - \Delta u + V^{\rm ext} u \enspace \textnormal{in} \enspace \R \times \R^n. \]
Here $(q^\prime, r^\prime) \in [2, \infty] \times [2, \infty]$
satisfying
\begin{equation}
\label{id:indicesVprimes}
1 - \frac{2}{q^\prime} = \frac{1}{a} \enspace \textnormal{and} \enspace 1 - \frac{2}{r^\prime} = \frac{1}{b},
\end{equation}
and $V^{\rm ext}$ denotes either the extension by zero outside $\Sigma$ if 
$(a, b) \neq (\infty, n/2)$, or a suitable continuous extension with support in 
$[T^\prime ,T^{\prime \prime}] \times \R^n$
with $T^\prime < 0 < T < T^{\prime \prime}$ if $(a, b) = (\infty, n/2)$ with $n\geq 3$.

Additionally, for any measurable function $\tilde{W}$ in $\R\times\R^n$ supported in 
$[T^\prime ,T^{\prime \prime}] \times \R^n$ such that $|\tilde{W}|^2 \in L^a(\R; L^b (\R^n))$,
\begin{equation}
\label{cond:decay_lines_for_Wtilde}
\sup_{\omega \in \Sph^{n-1}} \int_\R \| \mathbf{1}_{>R} |\tilde W|^2 \|_{L^\infty (\R \times H_{\omega, s})} \, \dd s < \infty
\end{equation}
and $\mathbf{1}_{\leq R} |\tilde W|^2 \in C(\R; L^{n/2} (\R^n))$ in the endpoint 
$(a, b) = (\infty, n/2)$ for $n \geq 3$,
we have that
\begin{equation}
\label{boun:CGO-leading}
\| \tilde{W} u^\sharp \|_{L^2(\R\times \R^n)} \leq C \| {\psi} \|_{L^2(H_{\hat{\nu}})},
\end{equation}
and
\begin{equation}
\label{lim:CGO-remainder}
\lim_{|\nu| \to \infty} \| \tilde{W} u^\flat \|_{L^2(\R\times \R^n)} = 0.
\end{equation}

The constant $C > 0$ of the inequality \eqref{boun:CGO-leading} 
only depends on $n$, $(a,b)$, $T$ and $R$, as well as on
$\| |\tilde W|^2 \|_{L^a (\R; L^b(\R^n))}$ and the quantity 
in \eqref{cond:decay_lines_for_Wtilde}.
\end{theorem}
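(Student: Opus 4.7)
The plan is to execute the Lavine--Nachman construction of the preliminary discussion using an extension $V^{\rm ext}$ of $V$, to invoke the \cref{prop:LNtrick} to build $u^\flat$ as $S_\nu(|W| v)$ for a suitable $v \in L^2$, and then to verify the two quantitative estimates \eqref{boun:CGO-leading}--\eqref{lim:CGO-remainder}. First, I extend $V$: when $(a, b) \neq (\infty, n/2)$ I take $V^{\rm ext}$ to be the zero extension off $\Sigma$; at the endpoint, I use a continuous extension compactly supported in $[T^\prime, T^{\prime\prime}] \times \R^n$ that preserves both the $C(\R; L^{n/2})$-continuity of $V$ and the line-decay condition \eqref{cond:decay_lines_for_V_(0,T)}. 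The \cref{prop:LNtrick} then yields a threshold $c > 0$ such that, for every $|\nu| \geq c$, the operator $\Id - M_W \circ S_\nu \circ M_{|W|}$ has a bounded inverse on $L^2(\R \times \R^n)$ with uniform norm bound via the Neumann series. Granting \eqref{boun:CGO-leading} with $\tilde W = W$, I set $v := (\Id - M_W \circ S_\nu \circ M_{|W|})^{-1}(W u^\sharp) \in L^2(\R\times\R^n)$ and $u^\flat := S_\nu(|W| v)$. The Strichartz estimate of the \cref{th:non-gain_Snu} places $u^\flat$ in $L^{q^\prime}(\R; L^{r^\prime}(\R^n))$, and the algebraic identity recalled in the Lavine--Nachman discussion yields that $u = e^\varphi(u^\sharp + u^\flat)$ solves $i\partial_\tm u = -\Delta u + V^{\rm ext} u$ in $\R \times \R^n$.

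The heart of the proof is the weighted bound \eqref{boun:CGO-leading}. The crucial structural feature is that $u^\sharp(\tm, \x)$ is independent of $\hat\nu \cdot \x$. Writing $\x = y + s\hat\nu$ with $y \in H_{\hat\nu}$ and $s \in \R$, and splitting $|\tilde W|^2 = \mathbf{1}_{\leq R}|\tilde W|^2 + \mathbf{1}_{>R}|\tilde W|^2$, I treat the two pieces separately. For the far part, Fubini together with the pointwise majorization
\[ \int_\R \mathbf{1}_{>R}|\tilde W(\tm, y + s\hat\nu)|^2 \, \dd s \leq \int_\R \|\mathbf{1}_{>R}|\tilde W|^2\|_{L^\infty(\R \times H_{\hat\nu, s})} \, \dd s, \]
Plancherel on $H_{\hat\nu}$ (which gives $\|u^\sharp(\tm, \centerdot)\|_{L^2(H_{\hat\nu})} = \|\psi\|_{L^2(H_{\hat\nu})}$), and the compact time support of $\tilde W$ in $[T^\prime, T^{\prime\prime}]$ together deliver the required bound. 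For the near part, supported in the slab $|\x| \leq R$, a H\"older pairing with $|\tilde W|^2 \in L^a_\tm L^b_\x$ combined with the compactness in the $s$ variable reduces the estimate to a mixed-norm restriction bound for $u^\sharp$ on the $(n{-}1)$-dimensional hyperplane $H_{\hat\nu}$; this is of Stein--Tomas/Strichartz type for the free Schr\"odinger evolution in dimension $n-1$, with the Keel--Tao endpoint entering precisely when $(a, b) = (\infty, n/2)$.

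For the vanishing \eqref{lim:CGO-remainder}, rewrite $\tilde W u^\flat = (M_{\tilde W} \circ S_\nu \circ M_{|W|}) v$. The Neumann series bound together with \eqref{boun:CGO-leading} applied to $\tilde W = W$ give $\|v\|_{L^2} \leq C\|\psi\|_{L^2}$ uniformly in $|\nu| \geq c$. The argument of the \cref{prop:MSM} is bilinear in the multipliers, so it also yields $\|M_{\tilde W} \circ S_\nu \circ M_{|W|}\|_{\mathcal{L}(L^2)} \to 0$ as $|\nu| \to \infty$ under the stated hypotheses on the pair $(\tilde W, W)$; the product then converges to zero. I expect the hardest step to be \eqref{boun:CGO-leading} at the endpoint $(a, b) = (\infty, n/2)$, where the reduction calls for the Keel--Tao endpoint in dimension $n - 1$ and the continuity $\mathbf{1}_{\leq R}|W|^2 \in C(\R; L^{n/2}(\R^n))$ inherited from the extension is what permits the approximation arguments required there.
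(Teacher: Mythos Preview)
Your proposal follows the paper's proof closely and is essentially correct: the Lavine--Nachman construction, the invocation of \cref{prop:LNtrick} and \cref{th:non-gain_Snu}, the near/far splitting for \eqref{boun:CGO-leading}, and the use of \cref{prop:MSM} for \eqref{lim:CGO-remainder} all match.

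The one mislocation is your expectation that the hardest step is \eqref{boun:CGO-leading} at the endpoint $(a,b)=(\infty,n/2)$ and that it requires the Keel--Tao endpoint in dimension $n-1$. In fact it does not: with $r'=2n/(n-2)$ the admissible $(n{-}1)$-dimensional Strichartz pair $(s,r')$ satisfies $2/s=(n-1)/2-(n-1)/r'=(n-1)/n$, so $s=2n/(n-1)>2$ for every $n\ge 3$. Thus ordinary (non-endpoint) Strichartz in dimension $n-1$ plus H\"older in time from $L^s$ down to $L^{q'}=L^2$ on the finite interval $[T',T'']$ suffices, and no continuity hypothesis on $|\tilde W|^2$ is used anywhere in the proof of \eqref{boun:CGO-leading}. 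The continuity assumption $\mathbf{1}_{\leq R}|W|^2\in C(\R;L^{n/2})$ enters only in \cref{prop:MSM}, where it is needed to approximate $W$ by piecewise-constant-in-time functions in order to force the Birman--Schwinger operator norm to vanish as $|\nu|\to\infty$; that is the sole place where the endpoint genuinely differs from the non-endpoint case.
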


\begin{proof}
Start by proving the inequality \eqref{boun:CGO-leading}, which in particular
shows that $ W u^\sharp \in L^2(\R \times \R^n)$ with $W$ as in \eqref{def:W} so that 
$|W| W = V^{\rm ext}$. We have that
\begin{align*}
\| \tilde W u^\sharp \|_{L^2(\R \times \R^n)} \leq \| \mathbf{1}_{\leq R} \tilde W u^\sharp \|_{L^2(\R \times \R^n)} + \| \mathbf{1}_{> R} \tilde W u^\sharp \|_{L^2(\R \times \R^n)},
\end{align*}
with $\mathbf{1}_{\leq R}$ denoting the characteristic 
function of $\{ x \in \R^n : |x| \leq R \}$. 

By Hölder's inequality we have that
\begin{align*}
& \| \mathbf{1}_{\leq R} \tilde W u^\sharp \|_{L^2(\R \times \R^n)} \leq (2R)^{1/r^\prime} \| |\tilde W|^2 \|_{L^a(\R; L^b (\R^n))}^{1/2} \| u^\sharp \|_{L^{q^\prime}((T^\prime,T^{\prime \prime}); L^{r^\prime} (H_{\hat{\nu}}))}, \\
& \| \mathbf{1}_{> R} \tilde W u^\sharp \|_{L^2(\R \times \R^n)} \leq S^{1/2} \Big( \int_\R \|\mathbf{1}_{> R} |\tilde W|^2 \|_{L^\infty (\R \times H_{\hat{\nu}, s})} \, \dd s \Big)^{1/2} \sup_{t \in \R} \| u^\sharp(t, \centerdot) \|_{L^2 (H_{\hat{\nu}})}.
\end{align*}
Here and below $S $ stands for $ T^{\prime \prime} -T^\prime$.

Note that there is $s \in [q^\prime, \infty]$ so that $(s, r^\prime)$ is an admissible Strichartz pair for dimension $n- 1$:
\[\frac{2}{s} = \frac{n-1}{2} - \frac{n-1}{r^\prime}  \enspace \textnormal{with} \enspace (n-1, s, r^\prime)\neq (2, 2, \infty).\]
Then, by Hölder's and Strichartz's inequalities, we have that
\[ \| u^\sharp \|_{L^{q^\prime}((T^\prime,T^{\prime\prime}); L^{r^\prime} (H_{\hat{\nu}}))} \leq S^{1/q^\prime - 1/s} \| u^\sharp \|_{L^s(\R; L^{r^\prime} (H_{\hat{\nu}}))} \lesssim S^{1/q^\prime - 1/s} \| {\psi} \|_{L^2(H_{\hat{\nu}})}. \]

Furthermore, by Plancherel's identity we have that
\[\sup_{t \in \R} \| u^\sharp(t, \centerdot) \|_{L^2 (H_{\hat{\nu}})} \leq \| {\psi} \|_{L^2(H_{\hat{\nu}})}.\]
This concludes the proof \eqref{boun:CGO-leading}.

The \cref{prop:LNtrick} justifies the existence and boundedness of the operator
$(\Id - M_W \circ S_\nu \circ M_{|W|})^{-1}$ in $L^2(\R \times \R^n)$.

Moreover, for $v \in L^2(\R \times \R^n)$, we have that $|W| v \in L^q (\R; L^r (\R^n))$, since Hölder's inequality implies
\[\| |W| v \|_{L^q(\R; L^r (\R^n))} \leq \| |W|^2 \|_{L^a(\R; L^b (\R^n))}^{1/2} \| v \|_{L^2(\R \times \R^n)}\]
with
$(q, r) \in [1, 2] \times [1, 2]$ and $(a, b) \in [1, \infty] \times [1, \infty]$ 
satisfying the relation 
\begin{equation}
\label{id:indices4Vqr}
\frac{2}{q} - 1 = \frac{1}{a} \enspace \textnormal{and} \enspace  \frac{2}{r} - 1 = \frac{1}{b}
\end{equation}
or equivalently \eqref{id:indices4Vab}.

Then, since 
$ W u^\sharp \in L^2(\R \times \R^n)$ and the Fourier multiplier
$S_\nu$ is a bounded operator from 
$L^q (\R; L^r (\R^n))$ to $L^{q^\prime} (\R; L^{r^\prime} (\R^n))$, we have that
\[ u^\flat = S_\nu \circ M_{|W|} \circ ( \Id - M_W \circ S_\nu \circ M_{|W|})^{-1} (W u^\sharp) \]
belongs to $ L^{q^\prime} (\R; L^{r^\prime} (\R^n))$ and is solution of 
\eqref{eq:remainder-equation_LNtrick}.
On the one hand, the fact that $u^\flat$ is solutions of
\eqref{eq:remainder-equation_LNtrick} was discussed when
explaining the Lavine--Nachman trick. On the other hand, $u^\flat $ belongs to $ L^{q^\prime} (\R; L^{r^\prime} (\R^n))$ since naming
\begin{equation}
\label{id:v_LNtrick}
v = ( \Id - M_W \circ S_\nu \circ M_{|W|})^{-1} (W u^\sharp) \in L^2(\R \times \R^n),
\end{equation}
we have that $u^\flat = S_\nu (|W| v)$.
This concludes the proof that $u^\flat \in L^{q^\prime} (\R; L^{r^\prime} (\R^n))$.

Additionally, the limit 
\eqref{lim:CGO-remainder} holds thanks to the bound of the operator 
$M_{\tilde{W}} \circ S_\nu \circ M_{|W|}$ given in the \cref{prop:MSM},
since $\tilde{W} u^\flat = M_{\tilde{W}} \circ S_\nu \circ M_{|W|} v$ with
$v$ as in \eqref{id:v_LNtrick} satisfying
\[ \| v \|_{L^2(\R \times \R^n)} \lesssim \| W u^\flat \|_{L^2(\R \times \R^n)} \lesssim \| {\psi} \|_{L^2(H_{\hat{\nu}})}.\]

Last chain of inequalities holds because of the boundedness of
$(\Id - M_W \circ S_\nu \circ M_{|W|})^{-1}$ in $L^2(\R \times \R^n)$, and
\eqref{boun:CGO-leading}.
\end{proof}

\section{Boundedness properties of $S_\nu$}\label{sec:boundedness}
Here we focus on inverting the constant-coefficient differential operator 
\begin{equation}
\label{term:constant-coefficient_operator}
i\partial_\tm + \Delta + 2 \nu \cdot \nabla,
\end{equation}
where $\nu \in \R^n \setminus \{ 0 \}$,
and finding bounds for its inverse on different spaces. The symbol of 
\eqref{term:constant-coefficient_operator} is the polynomial function
\begin{equation}
\label{id:symbol}
p_\nu (\tau, \xi) = - \tau - |\xi|^2 + i2 \nu \cdot \xi \qquad \forall (\tau, \xi) \in \R \times \R^n.
\end{equation}

For $\nu \in \R^n \setminus \{ 0 \}$, introduce the set 
\[\Gamma_\nu = \{ (\tau, \xi) \in \R \times \R^n : p_\nu (\tau, \xi) = 0 \}.\]
Then, the function 
\[(\tau, \xi) \in (\R \times \R^n) \setminus \Gamma_{\nu} \longmapsto \frac{1}{p_\nu (\tau, \xi)} \in \C\]
can be extended to $\R \times \R^n$ as a locally integrable function. 

For 
$f \in \mathcal{S}(\R \times \R^n)$, we define
\begin{equation}
\label{id:multiplier_definition}
S_\nu f (t, x) = \frac{1}{(2\pi)^\frac{n+1}{2}} \int_{\R \times \R^n} e^{i (t \tau + x \cdot \xi)} \frac{1}{p_\nu (\tau, \xi)} \widehat{f}(\tau, \xi) \, \dd (\tau, \xi)
\end{equation}
for all $(t, x) \in \R \times \R^n$.

It might be convenient for the reader to mention at this point that we choose to define the
Fourier transform in $\R^d$, with $d \in \N$, so that for functions $f \in \mathcal{S}(\R^d)$,
it reads as 
\[\widehat{f} (\xi) = \frac{1}{(2\pi)^{d/2}} \int_{\R^d} e^{-i\xi \cdot x} f(x) \, \dd x,\]
while its inverse is given by
\[\widecheck{f} (\xi) = \frac{1}{(2\pi)^{d/2}} \int_{\R^d} e^{i\xi \cdot x} f(x) \, \dd x.\]

The goal of this section is to prove the following inequalities:

\begin{theorem}\label{th:gain_Snu}\sl
Consider $n \in \N$. There exists an absolute constant $C > 0$ such that
\[ \sup_{s \in \R} \| S_\nu f \|_{L^2(\R \times H_{\hat{\nu}, s})} \leq \frac{C}{|\nu|} \int_\R \| f \|_{L^2(\R \times  H_{\hat{\nu}, s})} \, \dd s \]
for all $f \in \mathcal{S}(\R \times \R^n)$.
\end{theorem}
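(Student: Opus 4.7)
The plan is to reduce the estimate to a uniform pointwise bound on a one-dimensional Fourier integral. After rotating so that $\nu = |\nu| e_n$, write $x = (y, x_n) \in H_{\hat{\nu}} \times \R$ and denote by $\widetilde{g}$ the partial Fourier transform of $g$ in $(\tm, y)$. For fixed dual parameters $(\tau, \eta) \in \R \times H_{\hat{\nu}}$, set
\[ H_{\tau, \eta}(s) = \frac{1}{2\pi} \int_\R \frac{e^{is \xi_n}}{p_\nu(\tau, \eta, \xi_n)} \, \dd \xi_n. \]
Plancherel in $(\tm, y)$, the factorisation $\widetilde{S_\nu f}(\tau, \eta, \centerdot) = H_{\tau, \eta} \ast \widetilde{f}(\tau, \eta, \centerdot)$, and Minkowski's integral inequality reduce the theorem to the uniform kernel bound
\[ \sup_{(\tau, \eta, s)} |H_{\tau, \eta}(s)| \leq \frac{C}{|\nu|}. \]

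Computing $H_{\tau, \eta}$ by residues, the quadratic $p_\nu(\tau, \eta, \xi_n)$ in $\xi_n$ has two simple roots located at $i|\nu| \pm \sqrt{-|\nu|^2 - c}$, with $c = \tau + |\eta|^2$. The analysis splits into three regimes. If $c \geq 0$, the two roots split between the upper and lower half-planes, giving $|H_{\tau, \eta}(s)| \leq 1/(2\sqrt{|\nu|^2+c}) \leq 1/(2|\nu|)$ for every $s$. If $c < -|\nu|^2$, both roots lie in the upper half-plane, so $H_{\tau, \eta}$ is supported in $\{s > 0\}$ and equals there $e^{-|\nu|s} \sin(s\sqrt{-|\nu|^2-c})/\sqrt{-|\nu|^2-c}$; the bound $|\sin y| \leq |y|$ combined with $\max_{s>0} s e^{-|\nu|s} = 1/(e|\nu|)$ yields the desired control.

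The delicate regime is $-|\nu|^2 \leq c < 0$, in which both roots are still in the upper half-plane and
\[ H_{\tau, \eta}(s) = \frac{e^{-|\nu|s}\,\sinh\bigl(s \sqrt{|\nu|^2 + c}\bigr)}{\sqrt{|\nu|^2 + c}} \qquad (s > 0), \]
with $H_{\tau, \eta}(s) = 0$ for $s < 0$. Here the naive estimate $|\sinh y| \leq e^{|y|}/2$ fails because the remaining factor $1/\sqrt{|\nu|^2+c}$ can be much larger than $1/|\nu|$ as $c \to 0^-$; this is the true technical core of the proof. The plan is to maximise in $s$ directly: the critical equation $\tanh(s^* \sqrt{|\nu|^2+c}) = \sqrt{|\nu|^2+c}/|\nu|$ has a unique positive solution, and after the substitution $\alpha = \sqrt{|\nu|^2+c}/|\nu| \in [0, 1)$ one obtains
\[ \max_{s>0} H_{\tau, \eta}(s) = \frac{1}{|\nu|}\, g(\alpha), \qquad g(\alpha) = \frac{1}{\sqrt{1-\alpha^2}} \biggl(\frac{1-\alpha}{1+\alpha}\biggr)^{1/(2\alpha)}. \]
A short calculation shows that $g$ extends continuously to $[0, 1]$, with boundary values $g(0^+) = 1/e$ and $g(1^-) = 1/2$, and is therefore bounded by an absolute constant. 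This delivers the uniform kernel bound in the critical regime and closes the argument.
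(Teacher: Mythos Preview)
Your proof is correct and follows essentially the same route as the paper: Plancherel in the tangential variables $(\tm,y)$ reduces the estimate to a uniform $L^\infty$ bound on the one-dimensional kernel, which is then computed by residues and bounded case by case. The only cosmetic differences are that the paper first rescales $S_\nu$ to a normalised operator $S$ (so the kernel bound becomes an absolute constant rather than $C/|\nu|$) and that the paper leaves the sinh-regime bound as a ``rudimentary verification'', whereas you carry out the explicit optimisation in $s$ and identify the function $g(\alpha)$.
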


\begin{theorem}\label{th:non-gain_Snu} \sl
Consider $n \in \N$ and $(q, r) \in [1, 2] \times [1, 2]$ satisfying \eqref{id:Strichartz_indeces}.
There exists a constant $C > 0$ that only depends on $n$, $q$ and $r$ such that
\[ \| S_\nu f \|_{L^{q^\prime} (\R; L^{r\prime}(\R^n))} \leq C \| f \|_{L^q (\R; L^r(\R^n))} \]
for all $f \in \mathcal{S}(\R \times \R^n)$.
\end{theorem}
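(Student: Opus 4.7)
My plan is to reduce \cref{th:non-gain_Snu} to the Keel--Tao Strichartz machinery for the free Schr\"odinger propagator via a kernel representation of $S_\nu$ obtained through contour integration. I split $S_\nu = S_\nu^+ + S_\nu^-$ according to the sign of $\nu \cdot \xi$, where $S_\nu^\pm$ are the Fourier multipliers with symbols $\mathbf{1}_{\pm \nu \cdot \xi > 0}/p_\nu$; it suffices to treat $S_\nu^+$, the piece $S_\nu^-$ being symmetric (close the $\tau$-contour on the opposite side). For each $\xi$ with $\nu \cdot \xi > 0$ the only pole of $\tau \mapsto 1/p_\nu(\tau,\xi)$ lies in the upper half-plane at $\tau_0 = -|\xi|^2 + 2i\nu\cdot\xi$, so the residue theorem yields
\[
\mathcal{F}_\tau^{-1}\!\left[\frac{\mathbf{1}_{\nu \cdot \xi > 0}}{p_\nu(\cdot, \xi)}\right](t) = -i\sqrt{2\pi}\,\mathbf{1}_{\nu \cdot \xi > 0}\,\mathbf{1}_{t > 0}\,e^{-it|\xi|^2 - 2t\nu \cdot \xi}.
\]
Inverting in $\xi$ as well produces the representation
\[
S_\nu^+ f(t, x) = -i \int_{-\infty}^{t} \bigl[e^{i(t - s)\Delta}\, T_\nu^+(t - s)\, f(s, \cdot)\bigr](x)\, \mathrm{d}s,
\]
where $T_\nu^+(\sigma)$, for $\sigma > 0$, is the Fourier multiplier in $x$ with symbol $\mathbf{1}_{\nu \cdot \xi > 0}\, e^{-2\sigma\nu \cdot \xi}$.

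\textbf{Kernel bounds.} Set $K_\nu^+(\sigma) := e^{i\sigma\Delta}\, T_\nu^+(\sigma)$. Since the multiplier of $K_\nu^+(\sigma)$ has $L^\infty_\xi$-norm at most $1$, we have $\|K_\nu^+(\sigma)\|_{L^2 \to L^2} \leq 1$, uniformly in $\nu$ and $\sigma$. For the dispersive estimate I decompose $\xi = \xi_\parallel \hat{\nu} + \xi_\perp$ so that the integral kernel of $K_\nu^+(\sigma)$ factorises as $K_\perp \cdot K_\parallel$, with $K_\perp$ the free Schr\"odinger kernel in $n - 1$ dimensions (hence $\|K_\perp\|_\infty \lesssim \sigma^{-(n-1)/2}$) and
\[
K_\parallel(\sigma, a) = \frac{1}{\sqrt{2\pi}}\int_0^\infty e^{ia\xi - i\sigma\xi^2 - 2\sigma|\nu|\xi}\, \mathrm{d}\xi.
\]
I estimate $K_\parallel$ two ways: trivially by $(2\sigma|\nu|)^{-1}$ via the exponential decay in $\xi$, and by $C\sigma^{-1/2}$ via a Fresnel-type oscillatory integral bound after completing the square in the quadratic phase (the bounded factor $e^{-2\sigma|\nu|\xi} \in (0,1]$ is harmless for the oscillation argument). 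Their minimum controls $\|K_\parallel\|_\infty$ by $C\sigma^{-1/2}$ uniformly in $\nu$ and $a$, so that $\|K_\nu^+(\sigma)\|_{L^1 \to L^\infty} \leq C\sigma^{-n/2}$ uniformly in $\nu \neq 0$ and $\sigma > 0$.

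\textbf{Conclusion and main obstacle.} With these two bounds in hand, the bilinear form $(F, G) \mapsto \iint_{s<t} \langle K_\nu^+(t-s) F(s), G(t)\rangle\, \mathrm{d}s\,\mathrm{d}t$ satisfies exactly the $L^2$-energy and $|t-s|^{-n/2}$-dispersive estimates that drive the proof of \cite{zbMATH01215570}; the abstract bilinear $TT^\ast$ interpolation developed there uses only these two inputs and therefore delivers
\[
\|S_\nu^+ f\|_{L^{q'}(\R; L^{r'}(\R^n))} \leq C\, \|f\|_{L^q(\R; L^r(\R^n))}
\]
for every admissible $(q, r)$ in the theorem's range, including the endpoint $(q, r) = (2, 2n/(n+2))$ when $n \geq 3$. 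In the non-endpoint cases one may alternatively interpolate the two kernel bounds into $\|K_\nu^+(\sigma)\|_{L^r \to L^{r'}} \leq C\sigma^{-n(1/r - 1/2)}$ and close by Hardy--Littlewood--Sobolev in time, whose required scaling $1/q - 1/q' = 1 - n(1/r - 1/2)$ coincides with the admissibility condition. Adding the symmetric bound for $S_\nu^-$ concludes the proof. The main obstacle is securing the $\nu$-uniformity of the dispersive estimate: the sharp cutoff $\mathbf{1}_{\nu \cdot \xi > 0}$ together with the Laplace-type factor $e^{-2\sigma\nu \cdot \xi}$ distorts the free Schr\"odinger kernel in the $\hat{\nu}$-direction, so $K_\parallel$ is not itself a genuine Schr\"odinger kernel and a priori the dispersion rate could degrade. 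What saves the day is that $K_\parallel$ admits the two independent $L^\infty$ bounds above, whose minimum matches the Schr\"odinger decay regardless of $|\nu|$, which is precisely the uniformity that the scale invariance of the final Strichartz estimate demands.
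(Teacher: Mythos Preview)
For the non-endpoint range your argument coincides with the paper's: after rescaling $\nu$ to $e_n/2$, the paper derives the same kernel representation $Sf(t,\cdot)=\int_\R U_s[f(t-s,\cdot)]\,\dd s$ via the residue in $\tau$, proves the same $L^2\to L^2$ and $L^1\to L^\infty$ bounds on $U_s$---your $K_\parallel$ estimate is precisely their \cref{lem:oscillatory_integral}---and closes by interpolation and Hardy--Littlewood--Sobolev. (Your ``minimum of two bounds'' for $K_\parallel$ is actually redundant: the Fresnel bound is already uniform in $\nu$, since the monotone factor $e^{-2\sigma|\nu|\xi}$ has total variation at most $1$ on $[0,\infty)$ and one can integrate by parts against the bounded Fresnel antiderivative.)

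The endpoint $(q,r)=(2,r_n)$ is where your sketch has a gap. You claim the Keel--Tao machinery ``uses only these two inputs'' (the $L^2$ and $L^1\to L^\infty$ bounds on $K_\nu^+(\sigma)$), but it does not: those two bounds interpolate only to the \emph{diagonal} family $\|K_\nu^+(\sigma)\|_{L^r\to L^{r'}}$, which yields dyadic bilinear bounds $|B_j(F,G)|\lesssim 2^{j(1-n(1/r-1/2))}\|F\|_{L^2L^r}\|G\|_{L^2L^r}$; at $r=r_n$ the exponent vanishes and you get no \emph{off-diagonal} estimates in $(1/a,1/b)$ near $(1/r_n,1/r_n)$, which is exactly what the atomic-decomposition summation requires. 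Keel--Tao obtain the off-diagonal estimates from the factorisation $U(t)U(s)^*$ together with the homogeneous Strichartz bound for $U(t)$, and your $K_\nu^+(t-s)$ does not factor this way (one would get damping $e^{-2(t+s)\nu\cdot\xi}$, not $e^{-2(t-s)\nu\cdot\xi}$). The paper fills this gap by computing $U_t^*U_s$ explicitly and proving a \emph{second} dispersive estimate $\|U_t^*U_s\|_{L^1\to L^\infty}\lesssim|s-t|^{-n/2}$ (same one-dimensional Fresnel lemma, now with damping $e^{(s+t)\xi_n}$); this gives the non-endpoint bound $\|T\|_{L^q_tL^r_x\to L^2_x}\lesssim 1$ for $Tf=\int U_s f(s,\cdot)\,\dd s$ (\cref{lem:T_boundedness}), which then supplies the missing off-diagonal inputs to the dyadic interpolation. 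Your approach is therefore correct in spirit but needs this extra oscillatory-integral computation to close the endpoint.
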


The first inequality recalls a local smoothing, while the second is the counterpart of the
classical Strichartz inequalities.

Since the operator $S_\nu$ has certain {homogeneity}, we prove the 
\cref{th:gain_Snu,th:non-gain_Snu}
studying a {normalized} version of it.

Consider the polynomial function
\begin{equation}
\label{id:normalized_symbol}
p(\tau, \xi) = \tau - |\xi|^2 + i\xi_n \quad \forall\, (\tau, \xi) \in \R \times \R^n,
\end{equation}
where $\xi_n = \xi \cdot e_n$ and $\{e_1,\ldots,e_n\}$ is
the standard basis of $\R^n$.

Setting
\[\Gamma = \{ (\tau, \xi) \in \R \times \R^n : p (\tau, \xi) = 0 \},\]
the function 
\[(\tau, \xi) \in (\R \times \R^n) \setminus \Gamma \longmapsto \frac{1}{p (\tau, \xi)} \in \C\]
can be extended to $\R \times \R^n$ as a locally integrable function. 

For 
$f \in \mathcal{S}(\R \times \R^n)$, we define
\begin{equation}
\label{def:S}
S f (t, x) = \frac{1}{(2\pi)^\frac{n+1}{2}} \int_{\R \times \R^n} e^{i (t \tau + x \cdot \xi)} \frac{1}{p (\tau, \xi)} \widehat{f}(\tau, \xi) \, \dd (\tau, \xi) \quad \forall (t, x) \in \R \times \R^n.
\end{equation}

In order to relate the operators $S$ and $S_\nu$, we choose 
$Q \in \Orth(n)$ so that $\nu = |\nu| Q e_n$ and make the change
$ \sigma = -|\nu|^2 4 \tau$ and $ \eta = 2 |\nu| Q \xi$ that implies
\begin{equation}
\label{id:rescaling}
p_\nu (\sigma, \eta) = 4 |\nu|^2 p(\tau, \xi).
\end{equation}

One can check that if $f \in \mathcal{S}(\R \times \R^n)$ and we define
\begin{equation}
\label{id:scaling_function}
g (s, y) = f \Big(-\frac{s}{4 |\nu|^2}, \frac{Q y}{2 |\nu|} \Big) \quad \forall (s, y) \R \times \R^n,
\end{equation}
then
\[ \widehat{g}(\tau, \xi) = (2 |\nu|)^{n+2} \widehat{f} (- 4 |\nu|^2 \tau, 2 |\nu| Q \xi) \qquad \forall (\tau, \xi) \in \R \times \R^n, \]
and consequently
\begin{equation}
\label{id:scaling_operator}
S_\nu f (s, y) = \frac{1}{4|\nu|^2} S g (-4 |\nu|^2 s, 2 |\nu| Q^\T y) \quad \forall (s, y) \R \times \R^n.
\end{equation}

\subsection{Proof of the \cref{th:gain_Snu}}
After the relations \eqref{id:scaling_operator} and \eqref{id:scaling_function}, it is a 
simple task to check that
\begin{align*}
& \sup_{s \in \R} \| S_\nu f \|_{L^2(\R \times H_{\hat{\nu}, s})} = \frac{1}{4|\nu|^2} \frac{1}{(2 |\nu|)^{(n+1)/2}} \sup_{x_n \in \R} \| S g (\centerdot, x_n) \|_{L^2(\R \times \R^{n-1})},\\
& \int_{\R} \| g (\centerdot, y_n) \|_{L^2(\R \times \R^{n-1})} \, \dd y_n = 2|\nu| (2 |\nu|)^{(n+1)/2} \int_\R \| f \|_{L^2(\R \times  H_{\hat{\nu}, s})} \, \dd s.
\end{align*}

Then, the \cref{th:gain_Snu} is an immediate consequence of the following inequality
for the multiplier $S$.

\begin{proposition}\label{prop:L2_S} \sl
Consider $n \in \N$.
There exists an absolute constant $C > 0$ such that
\[ \sup_{x_n \in \R} \| S f (\centerdot, x_n) \|_{L^2(\R \times \R^{n-1})} \leq C \int_{\R} \| f (\centerdot, y_n) \|_{L^2(\R \times \R^{n-1})} \, \dd y_n \]
for all $f \in \mathcal{S}(\R \times \R^n)$.
\end{proposition}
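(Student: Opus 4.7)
The plan is to take partial Fourier transforms in the tangential variables $(t, x')$, writing $x = (x', x_n) \in \R^{n-1} \times \R$ and $\xi = (\xi', \xi_n)$, and reduce the statement to a uniform bound on a one-dimensional kernel in the $x_n$ direction. Denoting by $\mathcal{F}_{t, x'}$ the partial Fourier transform in $(t, x')$, Plancherel's theorem shows that the inequality of \cref{prop:L2_S} is equivalent to
\[ \sup_{x_n \in \R} \| \mathcal{F}_{t,x'}[S f](\centerdot, x_n) \|_{L^2(\R \times \R^{n-1})} \lesssim \int_{\R} \| \mathcal{F}_{t,x'} f (\centerdot, y_n) \|_{L^2(\R \times \R^{n-1})} \, \dd y_n. \]
A direct inspection of \eqref{def:S} shows that, for each fixed $(\tau, \xi') \in \R \times \R^{n-1}$, the map $\mathcal{F}_{t,x'}[S f](\tau, \xi', \centerdot)$ is (up to a harmless constant) the one-dimensional convolution in $x_n$ of $\mathcal{F}_{t,x'} f(\tau, \xi', \centerdot)$ with the kernel $K(\tau, \xi', \centerdot) := \mathcal{F}_{\xi_n}^{-1}[1/p(\tau, \xi', \centerdot)]$.

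Next, Minkowski's integral inequality applied in the $L^2_{\tau, \xi'}$-norm lets me pull that norm through the convolution, so the proposition reduces to the uniform kernel bound
\[ \sup_{(\tau, \xi') \in \R \times \R^{n-1}} \| K(\tau, \xi', \centerdot) \|_{L^\infty(\R)} \leq C. \]
To establish it I would compute $K$ explicitly via the residue theorem. Setting $a := \tau - |\xi'|^2$, one factors $p(\tau, \xi', \xi_n) = -(\xi_n - z_+)(\xi_n - z_-)$ with $z_\pm = i/2 \pm \sqrt{a - 1/4}$, and distinguishes three regimes according to the location of the roots: (i) $a \geq 1/4$, both roots on $\{\operatorname{Im} \zeta = 1/2\}$; (ii) $0 \leq a < 1/4$, both roots on the positive imaginary axis; (iii) $a < 0$, one root in each half plane. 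Closing the $\xi_n$-contour in the half plane dictated by the sign of $z$ and summing residues yields, respectively, $K(\tau, \xi', z) \propto e^{-z/2} \sin(wz)/w \cdot \mathbf{1}_{z > 0}$ with $w = \sqrt{a - 1/4}$; $K(\tau, \xi', z) \propto e^{-z/2} \sinh(\beta z)/\beta \cdot \mathbf{1}_{z > 0}$ with $\beta = \sqrt{1/4 - a}$; and a sum of two exponentials decaying away from $z = 0$ with prefactor $1/\beta$ in regime (iii).

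The main obstacle is the apparent singularity of the prefactors $1/w$ and $1/\beta$ as the two roots $z_\pm$ coalesce at $\xi_n = i/2$, i.e., as $w, \beta \to 0$. This is cancelled by the simultaneous vanishing of $\sin(wz)$ or $\sinh(\beta z)$ at $z = 0$: the elementary bound $|\sin(wz)/w| \leq |z|$ and the monotonicity of $s \mapsto \sinh(sz)/s$ on $(0, \infty)$---which gives $\sinh(\beta z)/\beta \leq 2 \sinh(|z|/2)$ for $\beta \in (0, 1/2]$---lead to the uniform estimate
\[ |K(\tau, \xi', z)| \lesssim |z|\, e^{-|z|/2} + e^{-|z|/2} \sinh(|z|/2) \lesssim 1 \]
in all three regimes, since in regime (iii) the prefactor $1/\beta \leq 2$ is already bounded. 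A final application of Plancherel in $(t, x')$ then concludes the proof.
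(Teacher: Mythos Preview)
Your proposal is correct and follows essentially the same approach as the paper: both take partial Fourier transforms in the tangential variables $(t,x')$, reduce via Plancherel and Minkowski/Young to a uniform $L^\infty$ bound on the one-dimensional kernel $K_\sigma$ (the paper's notation, with $\sigma = \tau - |\xi'|^2$ playing the role of your $a$), and then compute that kernel explicitly by residues in the same three regimes, observing that the $1/w$ and $1/\beta$ prefactors are harmless as the roots coalesce. The only cosmetic differences are that the paper performs the Fourier transforms in stages and records the explicit formulas for $K_\sigma$ in a separate lemma (\cref{lem:kernel_sigma}), leaving the final boundedness as a ``rudimentary verification,'' whereas you supply the concrete bounds $|\sin(wz)/w|\le |z|$ and $\sinh(\beta z)/\beta \le 2\sinh(|z|/2)$ directly.
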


\begin{proof}
Start by noticing that for $(t, x_n) \in \R \times \R$ we have that the Fourier transform 
of $x^\prime \in \R^{n-1} \mapsto Sf(t, x^\prime , x_n)$ can be written as
\[ \mathcal{F}[Sf(t, \centerdot, x_n)] (\xi^\prime) = \frac{e^{it|\xi^\prime|^2}}{2\pi} \int_{\R \times \R} e^{i (t \sigma + x_n \xi_n)} \frac{1}{\sigma - \xi_n^2 + i \xi_n} \widehat{f}(\sigma+|\xi^\prime|^2, \xi^\prime, \xi_n) \, \dd (\sigma, \xi_n) \]
for all $\xi^\prime \in \R^{n - 1}$. For that, we have performed the translation 
$\sigma = \tau - |\xi^\prime|^2$.

Applying Plancharel's identity in $\R$,
we have for $(\xi^\prime, x_n) \in \R^{n - 1} \times \R$ that
\[ \int_{\R} | \mathcal{F}[Sf(t, \centerdot, x_n)] (\xi^\prime) |^2 \, \dd t = \int_{\R} | u (\sigma, \xi^\prime, x_n) |^2 \, \dd \sigma \]
where
\[ u (\sigma, \xi^\prime, x_n) = \frac{1}{(2\pi)^{1/2}} \int_{\R} e^{i x_n \xi_n} \frac{1}{\sigma - \xi_n^2 + i \xi_n} \widehat{f}(\sigma+|\xi^\prime|^2, \xi^\prime, \xi_n) \, \dd \xi_n. \]

Plancharel's identity now in $\R^{n - 1}$ implies that
\[ \int_{\R} \| Sf(t, \centerdot, x_n) \|_{L^2(\R^{n - 1})}^2 \, \dd t = \int_{\R} \| \mathcal{F}[Sf(t, \centerdot, x_n)] \|_{L^2(\R^{n - 1})}^2 \, \dd t = \| u (\centerdot, x_n) \|_{L^2(\R \times \R^{n - 1})}^2  \]
for every $x_n \in \R$.

Before going on with the argument, we introduce a family of convolution operators.
For $\sigma \in \R \setminus \{ 0 \}$, the function
\[ \eta \in \R \longmapsto \frac{1}{\sigma - \eta^2 + i \eta} \in \C \]
is integrable---since the polynomial function in the denominator does not vanish in $\R$,
so we can define
\begin{equation}\label{id:kernel_sigma}
K_\sigma (y) = \frac{1}{2 \pi} \int_{\R} e^{i y \eta} \frac{1}{\sigma - \eta^2 + i \eta} \, \dd \eta \quad \forall y \in \R.
\end{equation}
Then, we introduce the family
\[ T_\sigma g = K_\sigma \ast g \quad \forall g \in \mathcal{S} (\R). \]

One can check that
\[ u (\sigma, \xi^\prime, x_n) = T_\sigma \mathcal{F}^{-1} [ \widehat{f} (\sigma+|\xi^\prime|^2, \xi^\prime, \centerdot) ] (x_n), \]
and consequently, for $(\sigma, \xi^\prime, x_n) \in (\R \setminus \{ 0 \}) \times \R^{n-1} \times \R$, we obtain
\[ |u (\sigma, \xi^\prime, x_n)| \leq \| K_\sigma \|_{L^\infty (\R)} \| \mathcal{F}^{-1} [ \widehat{f} (\sigma+|\xi^\prime|^2, \xi^\prime, \centerdot) ] \|_{L^1(\R)}. \]

Using the \cref{lem:kernel_sigma} and performing the translation 
$\tau = \sigma + |\xi^\prime|^2$, we can conclude that
\[ \| u (\centerdot, x_n) \|_{L^2(\R \times \R^{n - 1})}^2 \lesssim \int_{\R \times \R^{n - 1}} \| \mathcal{F}^{-1} [ \widehat{f} (\tau, \xi^\prime, \centerdot) ] \|_{L^1(\R)}^2 \, \dd (\tau, \xi^\prime). \]
The implicit constant of this inequality is absolute.

Since
\[ \mathcal{F}^{-1} [ \widehat{f} (\tau, \xi^\prime, \centerdot) ] (y_n) = \mathcal{F} [ f (\centerdot, y_n) ](\tau, \xi^\prime)\quad \forall (\tau, \xi^\prime, y_n) \in \R \times \R^{n-1} \times \R, \]
Young's inequalities for integrals, and then Plancherel's identity in 
$\R \times \R^{n - 1}$ yield
\begin{gather*}
\Big( \int_{\R \times \R^{n - 1}} \| \mathcal{F}^{-1} [ \widehat{f} (\tau, \xi^\prime, \centerdot) ] \|_{L^1(\R)}^2 \, \dd (\tau, \xi^\prime) \Big)^{1/2} \leq \int_{\R} \| \mathcal{F} [ f (\centerdot, y_n) ] \|_{L^2(\R \times \R^{n - 1})} \, \dd y_n \\
= \int_{\R} \| f (\centerdot, y_n) \|_{L^2(\R \times \R^{n - 1})} \, \dd y_n.
\end{gather*}
Summing up, we have shown that
\[ \| Sf (\centerdot, x_n) \|_{L^2(\R \times \R^{n - 1})} =  \| u (\centerdot, x_n) \|_{L^2(\R \times \R^{n - 1})} \lesssim \int_{\R} \| f (\centerdot, y_n) \|_{L^2(\R \times \R^{n - 1})} \, \dd y_n. \]
Taking supremum in $x_n \in \R$, we obtain the inequality of the statement.
\end{proof}

We now state and prove the lemma we used in the proof of the \cref{prop:L2_S}. In the
statement, $\mathbf{1}_I$ denotes the characteristic function of the interval $I \subset \R$.

\begin{lemma}\label{lem:kernel_sigma} \sl
For almost every $\sigma \in \R \setminus \{ 0 \}$, the function $K_\sigma$ defined in 
\eqref{id:kernel_sigma} is explicitly given by
\begin{align*}
K_\sigma (x) = & - \mathbf{1}_{(1/4, \infty)} (\sigma) \mathbf{1}_{(-\infty, 0)} (x) \frac{2e^{x/2}}{|4\sigma - 1|^{1/2}} \sin (|4\sigma - 1|^{1/2} x /2) \\
& - \mathbf{1}_{(0, 1/4)} (\sigma) \mathbf{1}_{(-\infty, 0)} (x) \frac{2e^{x/2}}{|4\sigma - 1|^{1/2}} \sinh (|4\sigma - 1|^{1/2} x /2) \\
& - \mathbf{1}_{(-\infty, 0)} (\sigma) \mathbf{1}_{(-\infty, 0)} (x) \frac{1}{|4\sigma - 1|^{1/2}} e^{(1 + |4\sigma - 1|^{1/2}) x /2} \\
& + \mathbf{1}_{(-\infty, 0)} (\sigma) \mathbf{1}_{(0, \infty)} (x) \frac{1}{|4\sigma - 1|^{1/2}} e^{-(|4\sigma - 1|^{1/2} - 1) x /2}
\end{align*}
for almost every $x \in \R$.

Consequently,
\[ \esssup_{\sigma \in \R \setminus \{ 0 \}} \| K_\sigma \|_{L^\infty (\R)} < \infty. \]
\end{lemma}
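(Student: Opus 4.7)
The plan is to evaluate the defining integral
\[ K_\sigma(y) = \frac{1}{2\pi} \int_\R e^{iy\eta} \frac{1}{\sigma - \eta^2 + i\eta}\, \dd\eta \]
by Cauchy's residue theorem, and then read off the uniform $L^\infty$ bound directly from the resulting explicit formula. First I would factor the denominator as $\sigma - \eta^2 + i\eta = -(\eta - \eta_+)(\eta - \eta_-)$ with $\eta_\pm = (i \pm \sqrt{4\sigma-1})/2$ the roots of $\eta^2 - i\eta - \sigma = 0$, and locate these simple poles by a short case analysis on $\sigma$: for $\sigma > 1/4$ both $\eta_\pm$ sit on the horizontal line $\operatorname{Im}\eta = 1/2$ and hence in the open upper half-plane; for $0 < \sigma < 1/4$ both are purely imaginary of the form $i(1 \pm \sqrt{1-4\sigma})/2$, strictly above the real axis; for $\sigma < 0$ the square root $\sqrt{1-4\sigma}$ exceeds $1$, so $\eta_+$ remains in the upper half-plane while $\eta_-$ crosses into the lower half-plane.

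In each regime I would close the contour by a large semicircle in the half-plane where $e^{iy\eta}$ decays---the upper half-plane when $y > 0$ and the lower one when $y < 0$. Since the integrand decays like $|\eta|^{-2}$ at infinity, Jordan's lemma kills the semicircular contribution and the residue theorem reduces $K_\sigma(y)$ to a signed sum of terms of the form $i\, e^{iy\eta_0}/g'(\eta_0)$ over the enclosed poles, where $g(\eta) = \sigma - \eta^2 + i\eta$ satisfies $g'(\eta_\pm) = -2\eta_\pm + i = \mp\sqrt{4\sigma - 1}$. Substituting the explicit values of $\eta_\pm$ and simplifying via Euler's identity $e^{i\theta} - e^{-i\theta} = 2i\sin\theta$ in the regime $\sigma > 1/4$, and the analogous identity $e^\theta - e^{-\theta} = 2\sinh\theta$ in the regime $0 < \sigma < 1/4$, yields the four pieces of the stated formula, each supported on the half-line dictated by the location of the poles.

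For the uniform $L^\infty$ bound it suffices to inspect each piece separately. The only factor that could a priori blow up is the prefactor $|4\sigma - 1|^{-1/2}$ as $\sigma \to 1/4$, but in both the regimes $\sigma > 1/4$ and $0 < \sigma < 1/4$ it multiplies $\sin$ or $\sinh$ of the same argument. Setting $\alpha = |4\sigma - 1|^{1/2}$ and writing the expression as
\[ \frac{e^{-y(1-\alpha)/2} - e^{-y(1+\alpha)/2}}{\alpha} = \frac{1}{\alpha}\int_{(1-\alpha)y/2}^{(1+\alpha)y/2} e^{-s}\, \dd s \]
(and similarly with oscillatory integrand in the trigonometric case), the right-hand side is dominated by $\min(|y|e^{-|y|(1-\alpha)/2}, 1/\alpha)$, which is uniformly bounded in $y \in \R$ and in the admissible range of $\alpha$ by a universal constant---e.g.\ by splitting $\alpha \leq 1/2$ and $\alpha > 1/2$ and using $\sup_{y\in\R} |y|e^{-|y|/4} < \infty$. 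For $\sigma < 0$ the prefactor $(1-4\sigma)^{-1/2} \leq 1$ is automatically bounded, and the exponents are nonpositive on the relevant half-line, so the corresponding pieces are bounded by $1$.

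The main technical obstacle will be the sign and orientation bookkeeping in the residue calculation: one must be consistent about which half-plane is closed for each sign of $y$, and keep track of the $\pm 2\pi i$ factor from the residue theorem against the signs of $g'(\eta_\pm)$, so that the four cases in the statement are obtained with the correct signs and supports. Once this bookkeeping is settled, both the explicit formula and the uniform $L^\infty$ bound fall out immediately.
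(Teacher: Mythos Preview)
Your approach is correct and essentially identical to the paper's: both evaluate $K_\sigma$ via the residue theorem after the same case analysis on $\sigma$ locating the two simple poles of $-(\eta-\eta_+)(\eta-\eta_-)$, closing the contour in the half-plane dictated by the sign of the spatial variable and invoking Jordan's lemma for the semicircular arc. The paper dispatches the uniform $L^\infty$ bound as a ``rudimentary verification'', so your explicit treatment of the near-degenerate regime $\sigma\to 1/4$ via the integral representation $(e^{-y(1-\alpha)/2}-e^{-y(1+\alpha)/2})/\alpha$ actually goes beyond what the paper writes out.
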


\begin{proof}
Start by defining the complex polynomial function
\[p_\sigma (\zeta) = - \zeta^2 + i \zeta + \sigma \quad \forall \zeta \in \C, \]
and observe that
\begin{equation}
\label{id:integral}
K_\sigma (x) = \int_\R f_{(\sigma, x)} (\xi) \, \dd \xi,
\end{equation}
where
\[f_{(\sigma, x)} (\zeta) = \frac{1}{2\pi} \frac{e^{-ix\zeta}}{p_\sigma(\zeta)} \quad \forall \zeta \in \C \setminus \{ \eta_\sigma, \kappa_\sigma \} \]
with $\eta_\sigma$ and $\kappa_\sigma$ denoting the roots of $p_\sigma$.

One can check that, whenever $\sigma > 1/4$, $p_\sigma$ vanishes at
\[\eta_\sigma = \frac{|4\sigma - 1|^{1/2} + i}{2} \quad \textnormal{and} \quad \kappa_\sigma = \frac{- |4\sigma - 1|^{1/2} + i}{2}. \]
For $\sigma = 1/4$, the polynomial function vanishes to second order at $i/2$. However, 
whenever $\sigma < 1/4$, it vanishes at the imaginary points
\[\eta_\sigma = i \Big[ \frac{|4\sigma - 1|^{1/2} + 1}{2} \Big] \quad \textnormal{and} \quad \kappa_\sigma = i \Big[ \frac{1 - |4\sigma - 1|^{1/2}}{2} \Big]. \]

The idea to prove the first part of this lemma is to compute the integral on the right-hand
side of \eqref{id:integral} for $\sigma \in \R \setminus \{ 0, 1/4 \}$, avoiding in this way
the zero of order $2$ at $\sigma = 1/4$. To do so, we will use Cauchy's residue theorem.

The residues of $f_{(\sigma,x)}$ at the points $\eta_\sigma$ and $\kappa_\sigma$ are 
respectively
\[ \res_{\eta_\sigma} f_{(\sigma,x)} = \frac{1}{2\pi} \frac{e^{-ix\eta_\sigma}}{\kappa_\sigma - \eta_\sigma}, \qquad \res_{\kappa_\sigma} f_{(\sigma,x)} = \frac{1}{2\pi} \frac{e^{-ix\kappa_\sigma}}{\eta_\sigma - \kappa_\sigma}. \]

For $\sigma > 1/4$ and $x \leq 0$, we choose a contour $\gamma_R$ so that
\begin{equation}
\label{id:upper_contour}
\int_{\gamma_R} f_{(\sigma, x)} (\zeta) \, \dd \zeta = \int_{-R}^R f_{(\sigma, x)} (\xi) \, \dd \xi + \int_0^\pi f_{(\sigma, x)} (Re^{i\theta}) i R e^{i\theta} \dd \theta.
\end{equation}

On the one hand, we have that
\[ \Big| \int_0^\pi f_{(\sigma, x)} (Re^{i\theta}) i R e^{i\theta} \dd \theta \Big| \lesssim \frac{R e^{Rx}}{|R - |\eta_\sigma|| |R - |\kappa_\sigma||}, \]
so that
\begin{equation}
\label{lim:upper_contour}
\lim_{R \to \infty} \Big| \int_0^\pi f_{(\sigma, x)} (Re^{i\theta}) i R e^{i\theta} \dd \theta \Big| = 0
\end{equation}
since $x \leq 0$.

On the other hand, for $R$ sufficiently large, Cauchy's residue theorem 
ensures that
\[\int_{\gamma_R} f_{(\sigma, x)} (\zeta) \, \dd \zeta = 2\pi i [ \res_{\eta_\sigma} f_{(\sigma,x)} + \res_{\kappa_\sigma} f_{(\sigma,x)} ] = - \frac{2e^{x/2}}{|4\sigma - 1|^{1/2}} \sin (|4\sigma - 1|^{1/2} x /2).\]

Therefore, whenever $\sigma > 1/4$, we have that
\[ K_\sigma (x) = - \frac{2e^{x/2}}{|4\sigma - 1|^{1/2}} \sin (|4\sigma - 1|^{1/2} x /2) \quad \forall x \in (-\infty, 0]. \]

For $\sigma > 1/4$ and $x \geq 0$, we choose a contour $\gamma_R$ so that
\begin{equation}
\label{id:lower_contour}
\int_{\gamma_R} f_{(\sigma, x)} (\zeta) \, \dd \zeta = \int_{-R}^R f_{(\sigma, x)} (\xi) \, \dd \xi + \int_0^\pi f_{(\sigma, x)} (Re^{-i\theta}) (-i) R e^{-i\theta} \dd \theta.
\end{equation}

On the one hand, we have that
\[ \Big| \int_0^\pi f_{(\sigma, x)} (Re^{-i\theta}) (-i) R e^{-i\theta} \dd \theta \Big| \lesssim \frac{R e^{-Rx}}{|R - |\eta_\sigma|| |R - |\kappa_\sigma||}, \]
so that
\begin{equation}
\label{lim:lower_contour}
\lim_{R \to \infty} \Big| \int_0^\pi f_{(\sigma, x)} (Re^{-i\theta}) (-i) R e^{-i\theta} \dd \theta \Big| = 0
\end{equation}
since $x \geq 0$.

On the other hand, for every $R > 0$ the Cauchy--Goursat theorem 
implies that
\[\int_{\gamma_R} f_{(\sigma, x)} (\zeta) \, \dd \zeta = 0.\]

Therefore, whenever $\sigma > 1/4$, we have that
\[ K_\sigma (x) = 0 \quad \forall x \in [0, \infty). \]

For $0 < \sigma < 1/4$ and $x \leq 0$, we choose a contour $\gamma_R$ as in 
\eqref{id:upper_contour}.
Since \eqref{lim:upper_contour} holds, Cauchy's residue theorem ensures that, for $R$ large 
enough, we know that
\[\int_{\gamma_R} f_{(\sigma, x)} (\zeta) \, \dd \zeta = 2\pi i [ \res_{\eta_\sigma} f_{(\sigma,x)} + \res_{\kappa_\sigma} f_{(\sigma,x)} ] = - \frac{2e^{x/2}}{|4\sigma - 1|^{1/2}} \sinh (|4\sigma - 1|^{1/2} x /2).\]

Therefore, whenever $0 < \sigma < 1/4$, we have that
\[ K_\sigma (x) = - \frac{2e^{x/2}}{|4\sigma - 1|^{1/2}} \sinh (|4\sigma - 1|^{1/2} x /2) \quad \forall x \in (-\infty, 0]. \]

For $0 < \sigma < 1/4$ and $x \geq 0$, we choose a contour $\gamma_R$ as in 
\eqref{id:lower_contour}.
Since \eqref{lim:lower_contour} holds, the Cauchy--Goursat theorem implies that, for all 
$R > 0$, we have that
\[\int_{\gamma_R} f_{(\sigma, x)} (\zeta) \, \dd \zeta = 0.\]

Therefore, whenever $0 < \sigma < 1/4$, we have that
\[ K_\sigma (x) = 0 \quad \forall x \in [0, \infty). \]

For $\sigma < 0$ and $x \leq 0$, we choose a contour $\gamma_R$ as in 
\eqref{id:upper_contour}.
Since \eqref{lim:upper_contour} holds, Cauchy's residue theorem ensures that, for $R$ large 
enough, we know that
\[\int_{\gamma_R} f_{(\sigma, x)} (\zeta) \, \dd \zeta = 2\pi i \res_{\eta_\sigma} f_{(\sigma,x)} = - \frac{1}{|4\sigma - 1|^{1/2}} e^{(1 + |4\sigma - 1|^{1/2}) x /2}.\]

Therefore, whenever $\sigma < 0$, we have that
\[ K_\sigma (x) = - \frac{1}{|4\sigma - 1|^{1/2}} e^{(1 + |4\sigma - 1|^{1/2}) x /2} \quad \forall x \in (-\infty, 0]. \]

For $\sigma < 0$ and $x \geq 0$, we choose a contour $\gamma_R$ as in \eqref{id:lower_contour}.
Since \eqref{lim:lower_contour} holds, Cauchy's residue theorem ensures that, for $R$ large 
enough, we know that
\[\int_{\gamma_R} f_{(\sigma, x)} (\zeta) \, \dd \zeta = 2\pi i \res_{\kappa_\sigma} f_{(\sigma,x)} = \frac{1}{|4\sigma - 1|^{1/2}} e^{-(|4\sigma - 1|^{1/2}- 1) x /2}.\]

Therefore, whenever $\sigma < 0$, we have that
\[ K_\sigma (x) = \frac{1}{|4\sigma - 1|^{1/2}} e^{-(|4\sigma - 1|^{1/2}- 1) x /2} \quad \forall x \in [0, \infty). \]

This concludes the computation of $K_\sigma$, which corresponds to the first part of this 
lemma. The second part consists of a rudimentary verification.
\end{proof}

\subsection{Proof of the \cref{th:non-gain_Snu}} After the relations in 
\eqref{id:scaling_operator} and \eqref{id:scaling_function}, one can check that
\begin{align*}
& \| S_\nu f \|_{L^{q^\prime} (\R; L^{r\prime}(\R^n))} = \frac{1}{4|\nu|^2} \frac{1}{(2 |\nu|)^{2/q^\prime + n/r^\prime}} \| S g \|_{L^{q^\prime} (\R; L^{r\prime}(\R^n))},\\
& \| g \|_{L^q (\R; L^r(\R^n))} = (2 |\nu|)^{2/q + n/r} \| f \|_{L^q (\R; L^r(\R^n))}.
\end{align*}

Then, since
\[ \frac{2}{q} + \frac{n}{r} - 2 - \frac{2}{q^\prime} - \frac{n}{r^\prime} = 2 \Big[ \frac{2}{q} - 2 + \frac{n}{r} - \frac{n}{2} \Big] = 0,  \]
the \cref{th:non-gain_Snu} is a consequence of the following inequality for $S$.

\begin{proposition} \label{prop:scale_invariant} \sl
Consider $n \in \N$ and $(q, r) \in [1, 2] \times [1, 2]$ satisfying 
\eqref{id:Strichartz_indeces}.
There exists a constant $C > 0$ that only depends on $n$, $q$ and $r$ such that
\[ \| S f \|_{L^{q^\prime} (\R; L^{r\prime}(\R^n))} \leq C \| f \|_{L^q (\R; L^r(\R^n))} \]
for all $f \in \mathcal{S}(\R \times \R^n)$.
\end{proposition}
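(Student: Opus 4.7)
The plan is to prove the inhomogeneous Strichartz-type estimate for $S$ by a $TT^\ast$ argument in the spirit of Keel and Tao. The first step is to obtain a representation of $S$ as a superposition of spatial propagators. By closing a contour in the $\tau$-variable, one computes the partial Fourier transform in $\tau$ alone of $1/p(\tau,\xi)$: the pole at $\tau = |\xi|^2 - i\xi_n$ lies in the lower (resp.\ upper) half-plane when $\xi_n > 0$ (resp.\ $\xi_n < 0$), and the residue calculus produces the formula
\[
Sf(t,\cdot) \;=\; i\sqrt{2\pi}\int_0^\infty V_-(s)\, f(t-s,\cdot)\,\dd s \;-\; i\sqrt{2\pi}\int_0^\infty V_+(s)\, f(t+s,\cdot)\,\dd s,
\]
where $V_-(s)$ (resp.\ $V_+(s)$) is the spatial Fourier multiplier with symbol $\mathbf{1}_{\xi_n < 0}\, e^{s\xi_n}\, e^{is|\xi|^2}$ (resp.\ $\mathbf{1}_{\xi_n > 0}\, e^{-s\xi_n}\, e^{-is|\xi|^2}$). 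The damping factor together with the half-space cut-off makes each $V_\pm(s)$ a contraction on $L^2(\R^n)$ uniformly in $s\ge 0$.

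Next, by time-reversal symmetry it suffices to bound the retarded piece $Rf(t,\cdot) = \int_0^\infty V_-(s) f(t-s,\cdot)\,\dd s$ from $L^q(\R;L^r(\R^n))$ into $L^{q'}(\R;L^{r'}(\R^n))$. Applying the $TT^\ast$ method reduces this to a bilinear estimate involving the product $V_-(s)V_-(r)^\ast$, which a direct computation identifies as the spatial multiplier with symbol $\mathbf{1}_{\xi_n<0}\, e^{(s+r)\xi_n}\, e^{i(s-r)|\xi|^2}$.

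The third step is the dispersive inequality. Exploit the tensor-product structure $V_-(s) = M_s \otimes e^{-is\Delta'}$ in the variables $(x_n, x')$, where $\Delta'$ is the $(n-1)$-dimensional Laplacian in $x'$ and $M_s$ is the one-dimensional Fourier multiplier with symbol $\mathbf{1}_{\xi_n<0}\, e^{s\xi_n + is\xi_n^2}$. Then the kernel of $V_-(s)V_-(r)^\ast$ factors into the standard $(n-1)$-dimensional Schrödinger kernel in $x'$, which contributes a factor of size $|s-r|^{-(n-1)/2}$, times the one-dimensional convolution kernel of $M_sM_r^\ast$, whose modulus is bounded pointwise by $C/(s+r)$ after a direct calculation of the $\xi_n$-integral. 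This yields the dispersive estimate
\[
\|V_-(s)V_-(r)^\ast\|_{L^1(\R^n)\to L^\infty(\R^n)} \;\lesssim\; \frac{1}{(s+r)\,|s-r|^{(n-1)/2}}.
\]

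Finally, assemble the Strichartz inequality. The \emph{main obstacle} is that this dispersive bound is not of the pure $|s-r|^{-\sigma}$ shape required by the standard Keel--Tao theorem, so the abstract result cannot be invoked as a black box. The plan is to exploit the extra $(s+r)^{-1}$ gain through a dyadic decomposition of the $(s,r)$-integration into regions where $|s-r|\sim 2^j$ and $s+r\sim 2^k$, apply the Keel--Tao bilinear atomic argument to the $(n-1)$-dimensional Schrödinger factor on each scale, and sum in $(j,k)$ by a Schur-type test that absorbs the one-dimensional damping. After careful bookkeeping this should yield the full $n$-dimensional Schrödinger admissibility $2/q' + n/r' = n/2$ for the desired range of exponents, including the endpoint $(q',r') = (2,\,2n/(n-2))$ when $n \ge 3$; the excluded case $(n,q,r) = (2,2,1)$ corresponds precisely to the forbidden Keel--Tao endpoint in dimension two, consistent with the stated exclusion.
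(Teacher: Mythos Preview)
Your representation of $S$ as a time-convolution with the half-space-projected propagators $V_\pm(s)$, and the tensor splitting $V_-(s)=M_s\otimes e^{-is\Delta'}$, are correct and match the paper's set-up. The gap is in the dispersive estimate for the one-dimensional factor.

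The bound you quote for the kernel of $M_sM_r^\ast$, namely $C/(s+r)$, is obtained by integrating the modulus $e^{(s+r)\xi_n}$ over $\xi_n<0$ and discards the quadratic oscillation $e^{i(s-r)\xi_n^2}$ altogether. The resulting estimate
\[
\|V_-(s)V_-(r)^\ast\|_{L^1\to L^\infty}\;\lesssim\;(s+r)^{-1}\,\lvert s-r\rvert^{-(n-1)/2}
\]
is homogeneous of total degree $-(n+1)/2$ in $(s,r)$; equivalently, the single-time version reads $\|V_-(s)\|_{L^1\to L^\infty}\lesssim s^{-(n+1)/2}$. After interpolating against the $L^2$ energy bound and inserting into any bilinear form, the scaling forces the admissibility relation $2-2/q=(n+1)(1/r-1/2)$, not the required $2-2/q=n(1/r-1/2)$. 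Concretely, test the bilinear upper bound against $F=G=\mathbf{1}_{[0,\lambda]}$ in time and send $\lambda\to 0$: the ratio to $\|F\|_{L^q}^2$ diverges like $\lambda^{-(1/r-1/2)}$. No dyadic reorganisation in $(j,k)$ can repair a homogeneity mismatch, since the deficit already appears on a single scale near the time origin; the ``extra $(s+r)^{-1}$ gain'' is in fact a half-power too strong at small times and cannot substitute for a missing half-power of $\lvert s-r\rvert$.

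What is missing is precisely the oscillatory contribution of the $\xi_n$-phase. The paper shows (\cref{lem:oscillatory_integral}) that the one-dimensional kernel with symbol $\mathbf{1}_{\xi_n<0}\,e^{(s+r)\xi_n}\,e^{i(s-r)\xi_n^2}$ also obeys the bound $C\lvert s-r\rvert^{-1/2}$, via a stationary-phase argument that balances the exponential damping against the quadratic phase. This upgrades the dispersive estimate to the standard $\lvert s-r\rvert^{-n/2}$, after which the non-endpoint case follows directly from Hardy--Littlewood--Sobolev in the time variable and the endpoint from the ordinary one-parameter Keel--Tao argument; the proposed two-parameter $(j,k)$ decomposition becomes unnecessary.
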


Before proceeding with the proof, it might be convenient for the reader to recall the notation
of the Schrödinger propagator $e^{-i\tm\Delta}$ in $\R^d$:
\[ e^{-it \Delta} f (x) = \frac{1}{(2\pi)^{d/2}} \int_{\R^d} e^{i\xi \cdot x} e^{it|\xi|^2} \widehat{f}(\xi) \, \dd \xi \quad \forall (t, x) \in \R \times \R^d, \]
as well as the inequality
\begin{equation}
\label{in:dispersive}
\| e^{-it \Delta} f \|_{L^\infty(\R^d)} \lesssim \frac{1}{|t|^{d/2}} \| f \|_{L^1(\R^d)},
\end{equation}
where the implicit constant only depends on $d$.

\begin{proof}[Proof of the \cref{prop:scale_invariant} when $(q, r) \neq (2, 2n/(n+2))$]
In order to prove the inequality, it will be useful to write $Sf$ in the following form: 
\begin{equation}
\label{id:Sf_u}
Sf(t, x) = \int_\R U_s [f(t-s, \centerdot )] (x) \, \dd s \quad
\forall (t, x) \in \R \times \R^n.
\end{equation}
Here $U_s$ denotes the Fourier multiplier given by
\[ U_s \phi (x) = i \sign(s) \frac{1}{(2\pi)^{n/2}} \int_{\R^n} e^{ix \cdot \xi} e^{is|\xi|^2} \mathbf{1}_{(-\infty, 0)} (s \xi_n) e^{s \xi_n} \widehat{\phi}(\xi) \, \dd \xi \quad \forall x \in \R^n, \]
for $s \in \R$ and $\phi \in \mathcal{S}(\R^n)$.
 
To check that \eqref{id:Sf_u} holds, start by performing the translation $\sigma = \tau - |\xi|^2$,
and writing
\[S f (t, x) = \frac{1}{(2\pi)^\frac{n+1}{2}} \int_{\R \times \R^n} e^{i (t \sigma + x \cdot \xi)} \frac{e^{i t |\xi|^2}}{\sigma + i \xi_n} \widehat{f}(\sigma + |\xi|^2, \xi) \, \dd (\sigma, \xi).\]

Observe that
\[ S f (t, x) = \frac{1}{(2\pi)^\frac{n+1}{2}} \int_{\R^n} e^{i x \cdot \xi} \Big( \int_\R \frac{1}{\sigma + i \xi_n} e^{i t \sigma} e^{i t |\xi|^2}  \widehat{f}(\sigma + |\xi|^2, \xi) \, \dd \sigma \Big) \, \dd \xi \]
with
\[e^{i t \sigma} e^{i t |\xi|^2} \widehat{f}(\sigma + |\xi|^2, \xi) = \frac{1}{(2\pi)^{1/2}} \int_\R e^{i \sigma s} e^{is|\xi|^2} \mathcal{F}[f(t-s, \centerdot)] (\xi) \, \dd s.\]

In the view of these two identities, it is natural to compute
\[\int_\R \frac{1}{\sigma + i \xi_n} \widecheck{\phi}(\sigma) \, \dd \sigma \]
for $\phi \in \mathcal{S}(\R)$.
We could proceed using Cauchy's residue theorem as in the \cref{lem:kernel_sigma}, but 
instead, we will use the well-known formula
\[ \mathcal{F}^{-1} \Big( \frac{1}{c + i\, \centerdot } \Big) (\xi) = (2\pi)^\frac{1}{2} e^{- c \xi} \mathbf{1}_{(0, \infty)} (\xi) \quad \forall \xi \in \R, \]
with $c \in (0, \infty)$.

For $\xi_n \in \R \setminus \{ 0 \}$, we have that
\begin{align*}
& \int_\R \frac{1}{\sigma + i \xi_n} \widecheck{\phi}(\sigma) \, \dd \sigma = \int_\R \frac{i}{- \xi_n + i \sigma} \widecheck{\phi}(\sigma) \, \dd \sigma \\
& = \mathbf{1}_{\R_-} (\xi_n)  \int_\R \mathcal{F}^{-1} \Big( \frac{i}{|\xi_n| + i\, \centerdot } \Big) (s) \phi(s) \, \dd s - \mathbf{1}_{\R_+} (\xi_n) \int_\R \mathcal{F}^{-1} \Big( \frac{i}{|\xi_n| + i\, \centerdot } \Big) (s) \phi(-s) \, \dd s \\
& = i (2\pi)^{1/2} \int_\R e^{s \xi_n}  \big[ \mathbf{1}_{\R_-} (\xi_n) \mathbf{1}_{\R_+} (s) - \mathbf{1}_{\R_+} (\xi_n) \mathbf{1}_{\R_-} (s) \big] \phi(s)  \, \dd s.
\end{align*}
Here $\R_-$ and  $\R_+$ denote the intervals $(-\infty, 0)$ and $(0, \infty)$ respectively.

Hence,
\[\int_\R \frac{1}{\sigma + i \xi_n} \widecheck{\phi}(\sigma) \, \dd \sigma = i (2\pi)^{1/2} \int_\R \sign(s) \mathbf{1}_{(-\infty, 0)} (s \xi_n) e^{s \xi_n} \phi(s)  \, \dd s \]
whenever $\xi_n \in \R \setminus \{ 0 \}$.

All these computations show that
\[ S f (t, x) = \frac{i}{(2\pi)^\frac{n}{2}} \int_{\R^n} e^{i x \cdot \xi} \Big( \int_\R \sign(s) \mathbf{1}_{(-\infty, 0)} (s \xi_n) e^{s \xi_n} e^{is|\xi|^2} \mathcal{F}[f(t-s, \centerdot)] (\xi) \, \dd s \Big) \, \dd \xi. \]
Then, changing the order of integration, we obtain the expression \eqref{id:Sf_u}.

Now, we start with the actual boundedness of $S$. For that, we will bound
the norm of the Fourier multiplier $U_s$ from $L^r(\R^n)$ to $L^{r^\prime} (\R^n)$ with
with $r \in [1, 2]$ and $r^\prime$ denoting its conjugate exponent. We proceed by 
interpolating the following two inequalities:
\begin{align}
\label{in:L2L2}
& \| U_s \phi \|_{L^2(\R^n)} \leq \| \phi \|_{L^2(\R^n)},\\
\label{in:LinftyL1}
& \| U_s \phi \|_{L^\infty(\R^n)} \lesssim \frac{1}{|s|^{n/2}} \|\phi \|_{L^1 (\R^n)},
\end{align}
for all $\phi \in \mathcal{S}(\R^n)$ and $s \in \R \setminus \{0\}$.
The implicit constant of the second inequality only depends on $n$.

The first inequality follows from Plancherel's identity, while to prove the second inequality, we observe 
that
\[ U_s \phi (x) = \frac{i \sign(s)}{2\pi} \int_\R K_s(x_n - y_n) e^{-is\Delta} [\phi(\centerdot, y_n)](x^\prime) \, \dd y_n \]
with $x^\prime = (x \cdot e_1, \dots , x \cdot e_{n-1})$ and
\[ K_s (y) = \int_{\R} e^{iy \eta} e^{is\eta^2} \mathbf{1}_{(-\infty, 0)} (s \eta) e^{s \eta}  \, \dd \eta \quad \forall y \in \R. \]

Then,
\[ \| U_s \phi \|_{L^\infty(\R^n)} \lesssim \| K_s \|_{L^\infty(\R)} \int_\R \| e^{-is\Delta} [\phi(\centerdot, y_n)] \|_{L^\infty(\R^{n-1})} \, \dd y_n. \]

From the estimate \eqref{in:dispersive} with $d=n-1$, we have that \eqref{in:LinftyL1} holds if
\begin{equation}
\label{in:s^1/2}
\| K_s \|_{L^\infty(\R)} \lesssim \frac{1}{|s|^{1/2}}.
\end{equation}

The inequality $\| K_s \|_{L^\infty(\R)} \leq 1/|s|$ follows directly from the definition of $K_s$
since
\[ \| K_s \|_{L^\infty(\R)} \leq \int_{\R} \mathbf{1}_{(-\infty, 0)} (s \eta) e^{s \eta}  \, \dd \eta. \]
However, this inequality is weaker than \eqref{in:s^1/2} whenever $|s|<1$. In order to obtain 
\eqref{in:s^1/2}, we need to consider the effect coming from the oscillations of 
$\eta \mapsto e^{is\eta^2}$.

Note that changing $\eta = \xi/s$ in the integral defining $K_s$, evaluated in $2y$, we have
\begin{align*}
K_s (2y) &= \frac{1}{|s|} \int_{\R} e^{i2y \xi/s} e^{i\xi^2/s} \mathbf{1}_{(-\infty, 0)} (\xi) e^{\xi}  \, \dd \xi = \frac{e^{-iy^2/s}}{|s|} \int_{\R} e^{i(\xi + y)^2/s} \mathbf{1}_{(-\infty, 0)} (\xi) e^{\xi}  \, \dd \xi\\
&= \frac{e^{-iy^2/s}}{|s|} e^{-y} \int_{-\infty}^y e^{i\eta^2/s} e^{\eta}  \, \dd \eta.
\end{align*}

Using the bound in the \cref{lem:oscillatory_integral}, we have that \eqref{in:s^1/2} holds, which confirms that \eqref{in:LinftyL1} also holds.

From the inequalities \eqref{in:L2L2} and \eqref{in:LinftyL1},
we deduce by the Riesz--Thorin interpolation that
\begin{equation}\label{in:Lr'Lr}
\| U_s \phi \|_{L^{r^\prime}(\R^n)} \lesssim \frac{1}{|s|^{n/r - n/2}}  \| \phi \|_{L^r(\R^n)} \quad \forall s \in \R \setminus \{ 0 \},
\end{equation}
where $r \in [1, 2]$ and $r^\prime$ is its conjugate 
exponent. The implicit constant of this inequality only depends on $n$ and $r$.

From the inequality \eqref{in:Lr'Lr} we derive the the wanted estimate as follows:
\[ \| S f (t, \centerdot) \|_{L^{r^\prime} (\R^n)} \leq \int_\R \| U_s [f(t-s, \centerdot )] \|_{L^{r^\prime}(\R^n)} \, \dd s \lesssim \int_\R \frac{\| f (t-s, \centerdot) \|_{L^r(\R^n)}}{|s|^{n/r - n/2}}   \, \dd s \quad \forall t \in \R. \]

Applying the Hardy--Littlewood--Sobolev inequality in $\R$ for
$n/r - n/2 = 1 - n(1/2 + 1/n - 1/r) $ with $1/q^\prime = 1/q - n(1/2 + 1/n - 1/r)$ we have
\[ \| S f \|_{L^{q^\prime} (\R; L^{r\prime}(\R^n))} \lesssim \| f \|_{L^q (\R; L^r(\R^n))} \]
whenever $0 < n(1/2 + 1/n - 1/r) < 1$. This yields the relation between $q$, $r$ and $n$.

Note that the condition $0 < n(1/2 + 1/n - 1/r)$ prevents us of reaching the case 
$(q, r) = \big( 2, 2n/(n+2) \big)$.
\end{proof}

\begin{lemma}\label{lem:oscillatory_integral} \sl For every $y \in \R$ and $s \in \R \setminus \{ 0 \}$, 
we have that
\[ \Big| \int_{-\infty}^y e^{i\frac{\xi^2}{s}} e^{\xi} \, \dd \xi \Big| \lesssim e^y |s|^{1/2}. \]
\end{lemma}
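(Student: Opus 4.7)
I would split the range of $|s|$: for $|s|\geq 1$ the estimate is immediate, since
\[ \Big|\int_{-\infty}^y e^{i\xi^2/s+\xi}\,\dd\xi\Big|\leq \int_{-\infty}^y e^\xi\,\dd\xi = e^y \leq e^y|s|^{1/2}. \]
The substantive case is $|s|<1$. Here I would first change variables via $\eta = y-\xi$ to factor out the growth, obtaining
\[ \int_{-\infty}^y e^{i\xi^2/s+\xi}\,\dd\xi = e^y\int_0^\infty e^{i(y-\eta)^2/s-\eta}\,\dd\eta. \]
Then, rescaling by $\eta = y+|s|^{1/2}v$ (so that $(y-\eta)^2/s=\epsilon v^2$ with $\epsilon=\sign(s)$) and subsequently shifting $v = c+w$ with $c = -y/|s|^{1/2}$, the $e^{-y}$ emerging from the rescaling cancels the original $e^y$ while the shift restores a factor of $e^{-Ac}=e^y$ (writing $A=|s|^{1/2}$) times a unimodular phase. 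This reduces the lemma to the uniform bound
\[ \sup_{\epsilon\in\{\pm 1\},\,\mu\in\R,\,A\in(0,1]}\,\Big|\int_0^\infty e^{i\epsilon w^2+i\mu w-Aw}\,\dd w\Big|\leq C. \]

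To establish this uniform bound I would integrate by parts using the antiderivative
\[ F(w)=\int_0^w e^{i\epsilon t^2+i\mu t}\,\dd t. \]
Completing the square identifies $F$, up to a unimodular constant, with a truncated Fresnel integral $\int_{\mu/(2\epsilon)}^{w+\mu/(2\epsilon)}e^{i\epsilon s^2}\,\dd s$, and the classical uniform bound on such truncations yields $\sup_{w,\mu}|F(w)|\leq C_0$ with an absolute constant $C_0$. Since $F(0)=0$ and $e^{-Aw}\to 0$ at infinity,
\[ \int_0^\infty e^{i\epsilon w^2+i\mu w-Aw}\,\dd w = \int_0^\infty F'(w)e^{-Aw}\,\dd w = A\int_0^\infty F(w)e^{-Aw}\,\dd w, \]
so the modulus is at most $A\,C_0\int_0^\infty e^{-Aw}\,\dd w=C_0$, as required.

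The principal obstacle is really bookkeeping rather than analysis: one must verify that the prefactors generated by the two changes of variable, namely an $|s|^{1/2}$ and an $e^{-y}$ from the rescaling and an $e^{-Ac}=e^y$ from the shift, combine to exactly $e^y|s|^{1/2}$ times a modulus-one factor multiplied by the controllable integral above. Once this tracking is done, the Fresnel-type uniform bound for $F$ is standard, and the integration by parts is unproblematic because the weight $e^{-Aw}$ annihilates the boundary contributions at infinity while $F(0)=0$ handles the boundary at the origin.
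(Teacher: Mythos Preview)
Your proof is correct, and it takes a genuinely different route from the paper's. The paper introduces a smooth cutoff $\chi_{\leq r}+\chi_{>r}=1$: on $|\xi|\lesssim r$ it uses $e^\xi\leq e^y$ together with the trivial size bound $\|\chi_{\leq r}\|_{L^1}\lesssim r$ to get $\lesssim re^y$; on $|\xi|\gtrsim r$ it integrates by parts once against $\partial_\xi[e^{i\xi^2/s}]$ (legitimate since $|\xi|\geq r$ there) to gain a factor $|s|/|\xi|$ and obtain $\lesssim(|s|/r)e^y$; then it chooses $r=|s|^{1/2}$ to balance the two pieces. You instead normalise the integral by affine changes of variable to a damped Fresnel integral $\int_0^\infty e^{i\epsilon w^2+i\mu w-Aw}\,\dd w$ and bound this uniformly by integrating by parts against the Fresnel antiderivative $F(w)=\int_0^w e^{i\epsilon t^2+i\mu t}\,\dd t$, whose uniform boundedness in $w$ and $\mu$ is classical. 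Your route is slightly slicker---there is no parameter to optimise and the integration by parts produces no loss since $A\int_0^\infty e^{-Aw}\,\dd w=1$---but it invokes the Fresnel bound as a black box; the paper's argument is entirely self-contained and follows the standard non-stationary-phase template, which tends to generalise more readily.
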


\begin{proof}
Take $\chi \in \mathcal{S}(\R)$ such that 
$\supp \chi \subset \{ \xi \in \R : |\xi| \leq 2 \}$ and
$\chi (\xi) = 1$ whenever $|\xi| \leq 1$. For $r \in (0, \infty)$, we define
\begin{align*}
& \chi_{\leq r} (\xi) = \chi (\xi/r) \quad \forall \xi \in \R,\\
& \chi_{>r}(\xi) = 1 - \chi_{\leq r} (\xi)\quad \forall \xi \in \R.
\end{align*}

On the one hand,
\begin{equation}
\label{in:leq_r}
\Big| \int_{-\infty}^y e^{i\frac{\xi^2}{s}} e^{\xi} \chi_{\leq r} (\xi) \, \dd \xi \Big| \leq r e^y \| \chi \|_{L^1(\R)}.
\end{equation}

On the other hand
\[\int_{-\infty}^y e^{i\frac{\xi^2}{s}} e^{\xi} \chi_{> r} (\xi) \, \dd \xi = -i \frac{s}{2} \int_{-\infty}^y \partial [ e^{i\frac{\, \centerdot^2}{s}} ] (\xi) \frac{e^{\xi}}{\xi} \chi_{> r} (\xi) \, \dd \xi \]
since
\[ \partial [ e^{i\frac{\, \centerdot^2}{s}} ] (\xi) = i \frac{2}{s} \xi e^{i\frac{\xi^2}{s}} \quad \forall \xi \in \R. \]

Integrating by parts, we obtain
\begin{align*}
\int_{-\infty}^y \partial [ e^{i\frac{\, \centerdot^2}{s}} ] (\xi) \frac{e^{\xi}}{\xi} \chi_{> r} (\xi) \, \dd \xi &= e^{i\frac{\, y^2}{s}} e^{y} \frac{\chi_{> r} (y)}{y} \\
& \quad -\int_{-\infty}^y e^{i\frac{\, \xi^2}{s}} e^{\xi} \Big[ \frac{\chi_{> r} (\xi)}{\xi} - \frac{1}{r} \frac{\chi^\prime_{\leq r} (\xi)}{\xi} - \frac{\chi_{> r} (\xi)}{\xi^2} \Big] \, \dd \xi.
\end{align*}
Here $\chi^\prime_{\leq r} (\xi) = \chi^\prime (\xi/r) $ for all $\xi \in \R$, with 
$\chi^\prime$ denoting the first-order derivative of $\chi$. 

The integral in the 
right-hand side of the previous identity can be bounded as follows:
\begin{align*}
& \Big| \int_{-\infty}^y e^{i\frac{\, \xi^2}{s}} e^{\xi} \Big[ \frac{\chi_{> r} (\xi)}{\xi} - \frac{1}{r} \frac{\chi^\prime_{\leq r} (\xi)}{\xi} - \frac{\chi_{> r} (\xi)}{\xi^2} \Big] \, \dd \xi \Big| \\
& \qquad \leq \sup_{\xi \in \R} \frac{|\chi_{> r} (\xi)|}{|\xi|} \int_{-\infty}^y e^{\xi} \, \dd \xi + e^y \Big[ \frac{1}{r} \int_\R \frac{|\chi^\prime_{\leq r} (\xi)|}{|\xi|} \, \dd \xi + \int_\R \frac{|\chi_{> r} (\xi)|}{\xi^2} \, \dd \xi \Big] \\
& \qquad \leq \frac{e^y}{r} \Big[ \sup_{\eta \in \R} \frac{|1 - \chi (\eta)|}{|\eta|} + \int_\R \frac{|\chi^\prime (\eta)|}{|\eta|} \, \dd \eta + \int_\R \frac{|1 - \chi (\eta)|}{\eta^2} \, \dd \eta \Big].
\end{align*}
Therefore, we have that
\[\Big| \int_{-\infty}^y \partial [ e^{i\frac{\, \centerdot^2}{s}} ] (\xi) \frac{e^{\xi}}{\xi} \chi_{> r} (\xi) \, \dd \xi \Big| \leq \frac{e^y}{r} \Big[ 2 \sup_{\eta \in \R} \frac{|1 - \chi (\eta)|}{|\eta|} + \int_\R \Big(\frac{|\chi^\prime (\eta)|}{|\eta|} + \frac{|1 - \chi (\eta)|}{\eta^2} \Big) \, \dd \eta \Big].  \]

Consequently,
\begin{equation}
\label{in:grt_r}
\Big| \int_{-\infty}^y e^{i\frac{\xi^2}{s}} e^{\xi} \chi_{> r} (\xi) \, \dd \xi \Big| \lesssim \frac{|s|}{r} e^y. 
\end{equation}

From the inequalities \eqref{in:leq_r} and \eqref{in:grt_r}, we have that
\[ \Big| \int_{-\infty}^y e^{i\frac{\xi^2}{s}} e^{\xi} \, \dd \xi \Big| \lesssim e^y \Big( r + \frac{|s|}{r} \Big). \]
Choosing $r\in (0, \infty)$ so that $r = |s|/r$, we obtain the bound stated.
\end{proof}

In order to prove the \cref{prop:scale_invariant} in the endpoint $(q, r) = (2, 2n/(n+2))$
with $n \geq 3$, we need some preliminary results.
Recall the definition of the Fourier multiplier $U_s$ defined in the proof of the 
\cref{prop:scale_invariant}
\[ U_s \phi (x) = i \sign(s) \frac{1}{(2\pi)^{n/2}} \int_{\R^n} e^{ix \cdot \xi} e^{is|\xi|^2} \mathbf{1}_{(-\infty, 0)} (s \xi_n) e^{s \xi_n} \widehat{\phi}(\xi) \, \dd \xi \quad \forall x \in \R^n, \]
for $s \in \R$ and $\phi \in \mathcal{S}(\R^n)$.
Define the map
\[ Tf(x) = \int_\R U_s [f(s, \centerdot )] (x) \, \dd s \quad
\forall x \in \R^n, \]
for all $f \in \mathcal{S}(\R \times \R^n)$.

\begin{lemma} \label{lem:T_boundedness} \sl
Consider $n \in \N$ and $(q, r) \in [1, 2] \times [1, 2]$ satisfying 
\eqref{id:Strichartz_indeces} with $(q, r) \neq (2, 2n/(n+2))$.
There exists a constant $C > 0$ that only depends on $n$, $q$ and $r$ such that
\[ \| T f \|_{L^2 (\R^n)} \leq C \| f \|_{L^q (\R; L^r(\R^n))} \]
for all $f \in \mathcal{S}(\R \times \R^n)$.
\end{lemma}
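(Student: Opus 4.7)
The plan is to prove \cref{lem:T_boundedness} by the bilinear $TT^\ast$ method, which is the standard route to dual Strichartz-type inequalities. The starting identity is
\begin{equation*}
\|Tf\|_{L^2(\R^n)}^2 = \langle Tf, Tf\rangle_{L^2(\R^n)} = \int_\R\int_\R \langle U_{s'}^\ast U_s f(s,\centerdot), f(s',\centerdot)\rangle_{L^2(\R^n)} \, \dd s \, \dd s',
\end{equation*}
so the lemma reduces to estimating this bilinear form by $\|f\|_{L^q(\R; L^r(\R^n))}^2$.

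The crucial input is a bilinear dispersive bound,
\begin{equation*}
\|U_{s'}^\ast U_s h\|_{L^{r'}(\R^n)} \lesssim |s-s'|^{n/2 - n/r}\, \|h\|_{L^r(\R^n)}, \qquad s \neq s',
\end{equation*}
which I would obtain by Riesz--Thorin interpolation from the trivial $\|U_{s'}^\ast U_s\|_{L^2 \to L^2} \leq 1$ (by Plancherel, since the Fourier symbol is pointwise bounded by $1$) and the dispersive endpoint $\|U_{s'}^\ast U_s\|_{L^1 \to L^\infty} \lesssim |s-s'|^{-n/2}$. To prove the latter I would compute the convolution kernel of $U_{s'}^\ast U_s$ from its Fourier symbol. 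A direct computation shows the symbol vanishes unless $\sign(s) = \sign(s')$, and when this holds it factorizes as $e^{i(s-s')|\xi|^2}\mathbf{1}_{\pm \xi_n > 0}\, e^{(s+s')\xi_n}$. The kernel therefore splits as the product of a Schrödinger kernel in $\R^{n-1}$, whose $L^\infty$ norm is $\lesssim |s-s'|^{-(n-1)/2}$, and a one-dimensional oscillatory integral of the form $\int_0^\infty e^{i(s-s')\xi_n^2 + ix_n\xi_n}\, e^{(s+s')\xi_n}\, \dd \xi_n$ (or its mirror image when $s, s' > 0$). Completing the square in $\xi_n$, translating, and rescaling reduces this one-dimensional integral to a form to which \cref{lem:oscillatory_integral} applies directly, yielding the bound $\lesssim |s-s'|^{-1/2}$ uniformly in $x_n$; multiplying the two factors gives $\|U_{s'}^\ast U_s\|_{L^1 \to L^\infty} \lesssim |s-s'|^{-n/2}$.

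With the bilinear dispersive estimate in hand, Hölder's inequality in $x$ gives
\begin{equation*}
|\langle U_{s'}^\ast U_s f(s,\centerdot), f(s',\centerdot)\rangle| \lesssim |s-s'|^{n/2 - n/r}\,\|f(s,\centerdot)\|_{L^r}\,\|f(s',\centerdot)\|_{L^r},
\end{equation*}
and the one-dimensional Hardy--Littlewood--Sobolev inequality closes the argument, yielding $\|Tf\|_{L^2}^2 \lesssim \|f\|_{L^q(\R; L^r(\R^n))}^2$ precisely under the admissibility relation $2 - 2/q = n/r - n/2$. The hypothesis $(q,r) \neq (2, 2n/(n+2))$ keeps the HLS exponent $n/r - n/2$ strictly inside $(0,1)$, while the other boundary case $(q,r) = (1,2)$ is handled trivially by Minkowski's inequality and the $L^2$-boundedness of each $U_s$.

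The main obstacle is the dispersive endpoint $\|U_{s'}^\ast U_s\|_{L^1 \to L^\infty} \lesssim |s-s'|^{-n/2}$, specifically controlling the one-dimensional oscillatory integral. Compared to the analogous estimate for a single $U_s$ in \cref{prop:scale_invariant}, there is an extra exponential weight $e^{(s+s')\xi_n}$ coming from composing two half-space-restricted operators; fortunately its sign is favorable relative to the half-space cut-off, so after the change of variables the estimate reduces essentially as-is to \cref{lem:oscillatory_integral}.
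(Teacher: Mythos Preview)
Your proposal is correct and follows essentially the same route as the paper: the $TT^\ast$ identity, explicit computation of the symbol of $U_{s'}^\ast U_s$ (vanishing unless $ss'>0$, then factorizing as $e^{i(s-s')|\xi|^2}\mathbf{1}_{(-\infty,0)}((s+s')\xi_n)e^{(s+s')\xi_n}$), Plancherel for the $L^2$ endpoint, the $(n-1)$-dimensional Schr\"odinger dispersive bound times a one-dimensional oscillatory integral reduced to \cref{lem:oscillatory_integral} via the change of variables $\eta=\xi_n/(s+s')$ and completion of the square, Riesz--Thorin interpolation, and Hardy--Littlewood--Sobolev in time. Your explicit mention of the trivial boundary case $(q,r)=(1,2)$ via Minkowski is a small completeness bonus over the paper's write-up.
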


\begin{proof}
Start by noticing that
\begin{align*}
\| T f \|_{L^2 (\R^n)}^2 
& = \int_{\R \times \R} \Big[ \int_{\R^n} U_s [f(s, \centerdot )] (x) \overline{U_t [f(t, \centerdot )] (x)} \, \dd x \Big] \, \dd (s,t) \\
& = \int_{\R \times \R^n} \Big[\int_{\R} U_t^\ast  U_s [f(s, \centerdot )] (x) \, \dd s \Big] \overline{f(t, x)} \, \dd (t, x),
\end{align*}
where
\[ U_t^\ast \phi (x) = -i \sign(t) \frac{1}{(2\pi)^{n/2}} \int_{\R^n} e^{ix \cdot \xi} e^{-it|\xi|^2} \mathbf{1}_{(-\infty, 0)} (t \xi_n) e^{t \xi_n} \widehat{\phi}(\xi) \, \dd \xi \quad \forall x \in \R^n, \]
and
\[ U_t^\ast U_s \phi (x) = \frac{\mathbf{1}_{\R_+}(st)}{(2\pi)^{n/2}} \int_{\R^n} e^{ix \cdot \xi} e^{i(s-t)|\xi|^2} \mathbf{1}_{\R_-} ((s+t) \xi_n) e^{(s+t) \xi_n} \widehat{\phi}(\xi) \, \dd \xi \quad \forall x \in \R^n, \]
with $(s, t) \in \R \times \R $ and $\phi \in \mathcal{S}(\R^n)$.
Here $\R_-$ and  $\R_+$ denote the intervals $(-\infty, 0)$ and $(0, \infty)$ respectively,
and we have used that
\[ \sign(s) \sign(t) \mathbf{1}_{(-\infty, 0)} (s \xi_n) \mathbf{1}_{(-\infty, 0)} (t \xi_n) = \mathbf{1}_{(0, \infty)}(st) \mathbf{1}_{(-\infty, 0)} ((s+t) \xi_n). \]

Let $L$ denote the linear operator given by
\[ L f(t, x) = \int_{\R} U_t^\ast  U_s [f(s, \centerdot )] (x) \, \dd s  \quad \forall (t, x) \in  \R \times \R^n. \]

The inequality announced in the statement will follow from the next one
\begin{equation}
\label{in:TastT=L}
\| L f \|_{L^{q^\prime} (\R; L^{r\prime}(\R^n))} \leq C \| f \|_{L^q (\R; L^r(\R^n))}
\end{equation}
for all $f \in \mathcal{S}(\R \times \R^n)$, with $q^\prime$ and $r^\prime$ the conjugate
exponents of $q$ and $r$. Indeed,
\[ \| T f \|_{L^2 (\R^n)}^2 \leq \| L f \|_{L^{q^\prime} (\R; L^{r\prime}(\R^n))} \| f \|_{L^q (\R; L^r(\R^n))} \lesssim \| f \|_{L^q (\R; L^r(\R^n))}^2.  \]

It can be convenient for the reader to mention here that one can check that $L = T^\ast T$
with $T^\ast$ denoting the adjoint of $T$.

We now focus on showing that \eqref{in:TastT=L} holds. This is pretty similar to the proof
of the \cref{prop:scale_invariant}. Start by bounding
the norm of $U_t^\ast U_s$ from $L^r(\R^n)$ to $L^{r^\prime} (\R^n)$.

We proceed again by 
interpolating the following two inequalities:
\begin{align}
\label{in:L2L2_TastT}
& \| U_t^\ast U_s \phi \|_{L^2(\R^n)} \leq \| \phi \|_{L^2(\R^n)},\\
\label{in:LinftyL1_TastT}
& \| U_t^\ast U_s \phi \|_{L^\infty(\R^n)} \lesssim \frac{1}{|s-t|^{n/2}} \|\phi \|_{L^1 (\R^n)},
\end{align}
for all $\phi \in \mathcal{S}(\R^n)$ and $(s,t) \in \R \times \R $ such that $s \neq t$.
The implicit constant of the second inequality only depends on $n$.

The first inequality follows from Plancherel's identity, while to prove the second inequality, we observe 
that
\[ U_t^\ast U_s \phi (x) = \frac{\mathbf{1}_{\R_+}(st)}{2\pi} \int_\R K_{(s,t)}(x_n - y_n) e^{-i(s-t)\Delta} [\phi(\centerdot, y_n)](x^\prime) \, \dd y_n \]
with $x^\prime = (x \cdot e_1, \dots , x \cdot e_{n-1})$ and
\[ K_{(s,t)} (y) = \int_{\R} e^{iy \eta} e^{i(s-t)\eta^2} \mathbf{1}_{(-\infty, 0)} ((s+t) \eta) e^{(s+t)\eta}  \, \dd \eta \quad \forall y \in \R. \]

Then,
\[ \| U_t^\ast U_s \phi \|_{L^\infty(\R^n)} \lesssim \| K_{(s,t)} \|_{L^\infty(\R)} \int_\R \| e^{-i(s-t)\Delta} [\phi(\centerdot, y_n)] \|_{L^\infty(\R^{n-1})} \, \dd y_n. \]
From the estimate \eqref{in:dispersive} with $d=n-1$,
we have that \eqref{in:LinftyL1_TastT} holds if
\begin{equation}
\label{in:s^1/2_TastT}
\| K_{(s,t)} \|_{L^\infty(\R)} \lesssim \frac{1}{|s-t|^{1/2}}
\end{equation}
for $(s,t) \in \R \times \R $ such that $s\neq t$.

Since $st > 0$, we have that $s \neq - t$ and, consequently, we can perform the change variables  
$\eta = \xi/(s+t)$ in the integral defining $K_{(s,t)}$. Evaluating this in $2y$, we have
\begin{align*}
K_{(s,t)} (2y) &= \frac{1}{|s+t|} \int_{\R} e^{i\frac{2y}{s+t} \xi} e^{i\frac{s-t}{(s+t)^2} \xi^2} \mathbf{1}_{(-\infty, 0)} (\xi) e^{\xi}  \, \dd \xi \\
& = \frac{e^{-i\frac{y^2}{s-t}}}{|s+t|} \int_{\R} e^{i\frac{s-t}{(s+t)^2}\big(\xi + \frac{s+t}{s-t}y \big)^2} \mathbf{1}_{(-\infty, 0)} (\xi) e^{\xi}  \, \dd \xi \\
&= \frac{e^{-i\frac{y^2}{s-t}}}{|s+t|} e^{-\frac{s+t}{s-t}y} \int_{-\infty}^{\frac{s+t}{s-t}y} e^{i\frac{s-t}{(s+t)^2} \eta^2} e^{\eta}  \, \dd \eta.
\end{align*}

Using the bound in the \cref{lem:oscillatory_integral}, we have that \eqref{in:s^1/2_TastT} holds, which confirms that \eqref{in:LinftyL1_TastT} also holds.

From the inequalities \eqref{in:L2L2_TastT} and \eqref{in:LinftyL1_TastT},
we deduce by the Riesz--Thorin interpolation that
\begin{equation}\label{in:Lr'Lr_TastT}
\| U_t^\ast U_s \phi \|_{L^{r^\prime}(\R^n)} \lesssim \frac{1}{|s - t|^{n/r - n/2}}  \| \phi \|_{L^r(\R^n)} \quad \forall (s, t) \in \R \times \R : s\neq t,
\end{equation}
where $r \in [1, 2]$ and $r^\prime$ is its conjugate 
exponent. The implicit constant of this inequality only depends on $n$ and $r$.

From the inequality \eqref{in:Lr'Lr_TastT} we derive the the wanted estimate as follows:
\[ \| L f (t, \centerdot) \|_{L^{r^\prime} (\R^n)} \leq \int_\R \| U_t^\ast U_s [f(s, \centerdot )] \|_{L^{r^\prime}(\R^n)} \, \dd s \lesssim \int_\R \frac{\| f (s, \centerdot) \|_{L^r(\R^n)}}{|s-t|^{n/r - n/2}}   \, \dd s \quad \forall t \in \R. \]

Applying the Hardy--Littlewood--Sobolev inequality in $\R$ for
$n/r - n/2 = 1 - n(1/2 + 1/n - 1/r) $ with $1/q^\prime = 1/q - n(1/2 + 1/n - 1/r)$ we have
\[ \| L f \|_{L^{q^\prime} (\R; L^{r\prime}(\R^n))} \lesssim \| f \|_{L^q (\R; L^r(\R^n))} \]
whenever $0 < n(1/2 + 1/n - 1/r) < 1$. This implies the relation between $q$, $r$ and 
$n$.

This concludes the proof of the inequality \eqref{in:TastT=L}, which was the key 
inequality to ensure the lemma.
\end{proof}


For convenience, let $r_n \in [1, 2]$ denote $r_n =  2n/(n+2)$.

\begin{proof}[Proof of the \cref{prop:scale_invariant} when $(q, r) = (2, r_n)$ with $n\geq 3$]
For $j \in \Z$ consider
\[S_jf(t, x) = \int_\R \mathbf{1}_{2^j} (s) U_s [f(t-s, \centerdot )] (x) \, \dd s \quad \forall (t, x) \in \R \times \R^n, \]
where $\mathbf{1}_{2^j}$ denotes the characteristic function of 
$\{ s \in \R : 2^{j-1} < |s| \leq 2^j \}$.

Following the approach adopted by Keel and Tao in \cite{zbMATH01215570}, we will prove
that
\begin{equation}
\label{in:summation_j}
\sum_{j \in \Z} \Big| \int_{\R \times \R^n} S_j f(t, x) \overline{g(t, x)} \, \dd (t,x) \Big| \lesssim \| f \|_{L^2 (\R; L^{r_n}(\R^n))} \| g \|_{L^2 (\R; L^{r_n}(\R^n))}.
\end{equation}

Observe that \eqref{in:summation_j} implies by the triangle inequality that
\[ \Big| \int_{\R \times \R^n} S f(t, x) \overline{g(t, x)} \, \dd (t,x) \Big| \lesssim \| f \|_{L^2 (\R; L^{r_n}(\R^n))} \| g \|_{L^2 (\R; L^{r_n}(\R^n))}, \]
and by duality we obtain the inequality announced in the proposition \cref{prop:scale_invariant}
when $(q, r) = (2, 2n/(n+2))$ with $n\geq 3$.

As in \cite{zbMATH01215570}, our first goal will be to prove 
the two-parameter family of inequalities:
\begin{equation}
\label{in:2-parameter_ineq}
\| S_j f \|_{L^2 (\R; L^{b^\prime}(\R^n))} \lesssim 2^{-j\frac{n}{2}\big( \frac{1}{a} - \frac{1}{r_n} + \frac{1}{b} - \frac{1}{r_n} \big)} \| f \|_{L^2 (\R; L^a(\R^n))}
\end{equation}
for all $j \in \Z$ and $(1/a, 1/b)$ in a neighbourhood of $(1/r_n, 1/r_n)$. The implicit 
constant only depends on $n$ and the neighbourhood around $(1/r_n, 1/r_n)$.

Observe that still here we cannot sum for $j \in \Z$ when $(a, b) = (r_n, r_n)$,
since in this case the exponent is $0$.

The inequality \eqref{in:2-parameter_ineq} follows by interpolating the next three inequalities:
\begin{align}
\label{in:dispersive_L2t}
& \| S_j f \|_{L^2 (\R; L^\infty (\R^n))} \lesssim 2^{-j\frac{n}{2}\big( 1 - \frac{1}{r_n} + 1 - \frac{1}{r_n} \big)} \| f \|_{L^2 (\R; L^1(\R^n))},\\
\label{in:non-enpoint_T_L2t}
& \| S_j f \|_{L^2 (\R \times \R^n)} \lesssim 2^{-j\frac{n}{2}\big( \frac{1}{r} - \frac{1}{r_n} + \frac{1}{2} - \frac{1}{r_n} \big)} \| f \|_{L^2 (\R; L^r(\R^n))}, \\
\label{in:non-enpoint_T_L2t_dual}
& \| S_j f \|_{L^2 (\R; L^{r^\prime}(\R^n))} \lesssim 2^{-j\frac{n}{2}\big( \frac{1}{2} - \frac{1}{r_n} + \frac{1}{r} - \frac{1}{r_n} \big)} \| f \|_{L^2 (\R \times \R^n)},
\end{align}
for all $f \in \mathcal{S}(\R \times \R^n)$ and $1 \leq r < r_n $. The implicit constants 
here only depends on $n$, as well as on $r$ for those in \eqref{in:non-enpoint_T_L2t} and
\eqref{in:non-enpoint_T_L2t_dual}.

Let us show how to derive these three inequalities.

Using \eqref{in:LinftyL1}, we have that
\begin{align*}
\| S_j f (t, \centerdot) \|_{L^\infty (\R^n)} & \leq \int_\R \mathbf{1}_{2^j} (s) \| U_s [f(t-s, \centerdot )] \|_{L^\infty (\R^n)} \, \dd s \\
& \lesssim \int_\R \frac{\mathbf{1}_{2^j} (s)}{|s|^{n/2}} \| f(t-s, \centerdot ) \|_{L^1 (\R^n)} \, \dd s.
\end{align*}
By Young's inequality we have that
\[ \| S_j f \|_{L^2 (\R; L^\infty (\R^n))} \lesssim 2^{-j(n/2 - 1)} \| f \|_{L^2 (\R; L^1(\R^n))}, \]
which proves \eqref{in:dispersive_L2t}.

In order to prove \eqref{in:non-enpoint_T_L2t},
it is convenient to realize that
\[ S_j f(t, x) = T g_t (x) \quad \forall (t, x) \in \R \times \R^n \]
with $T$ as in the \cref{lem:T_boundedness} and $g_t^j(s,y) = \mathbf{1}_{2^j} (s) f(t-s, y)$.
Hence,
\begin{align*}
\| S_j f (t, \centerdot) \|_{L^2 (\R^n)} &= \| T g_t^j \|_{L^2 (\R^n)} \\
&\lesssim \| g_t^j \|_{L^q (\R; L^r(\R^n))} = \Big( \int_\R \mathbf{1}_{2^j}(s) \| f(t-s, \centerdot ) \|_{L^r (\R^n)}^q \, \dd s \Big)^{1/q}.
\end{align*}

Using Young's inequality for $2/q>1$ we have that
\[ \| S_j f \|_{L^2 (\R \times \R^n)}^q \lesssim \| \mathbf{1}_{2^j} \|_{L^1(\R)} \| f \|_{L^2 (\R; L^r(\R^n))}^q, \]
which can be re-written as
\[ \| S_j f \|_{L^2 (\R \times \R^n)} \lesssim 2^{j/q} \| f \|_{L^2 (\R; L^r(\R^n))} = 2^{-j\big[\frac{1}{2} \big(\frac{n}{r} - \frac{n}{2} \big) - 1\big]} \| f \|_{L^2 (\R; L^r(\R^n))}, \]
by using the relation \eqref{id:Strichartz_indeces}. This proves the inequality 
\eqref{in:non-enpoint_T_L2t} since
\[ \frac{1}{2} \Big(\frac{n}{r} - \frac{n}{2} \Big) - 1 = \frac{n}{2}\Big( \frac{1}{r} - \frac{1}{2} - \frac{2}{n} \Big) = \frac{n}{2}\Big( \frac{1}{r} + \frac{1}{2} - \frac{2}{r_n} \Big). \]

Finally, the inequality \eqref{in:non-enpoint_T_L2t_dual} is 
straightforward after \eqref{in:non-enpoint_T_L2t} and the identity
\begin{equation}
\label{id:quasi-adjoint}
\int_{\R \times \R^n} S_j f(t, x) \overline{g(t, x)} \, \dd (t,x) = \int_{\R \times \R^n}  \tilde{f}(t, x) \overline{S_j \tilde{g}(t, x)} \, \dd (t,x),
\end{equation}
where $\tilde{f}(t, x) = f(t, x^\prime, -x_n)$ and 
$\tilde{g}(t, x) = g(t, x^\prime, -x_n)$.

Before checking that the previous identity holds, let us use it, together with the inequality \eqref{in:non-enpoint_T_L2t}, to derive \eqref{in:non-enpoint_T_L2t_dual}:
%
%
\begin{align*}
 \Big| \int_{\R \times \R^n} & S_j f(t, x) \overline{g(t, x)} \, \dd (t,x) \Big| \\
& \leq \| \tilde f \|_{L^2 (\R \times \R^n)} \| S_j \tilde{g} \|_{L^2 (\R \times \R^n)}  \lesssim 2^{j/q} \| \tilde{f} \|_{L^2 (\R \times \R^n)} \| \tilde{g} \|_{L^2 (\R; L^r(\R^n))}\\
& = 2^{-j\big[\frac{1}{2} \big(\frac{n}{r} - \frac{n}{2} \big) - 1\big]} \| f \|_{L^2 (\R \times \R^n)} \| g \|_{L^2 (\R; L^r(\R^n))}.
\end{align*}
Then, by duality we conclude that \eqref{in:non-enpoint_T_L2t_dual} holds.

Let us check that \eqref{id:quasi-adjoint} holds. Start by noticing that
\begin{align*}
\int_{\R \times \R^n} S_j f(t, x) \overline{g(t, x)} & \, \dd (t,x) \\
& = \int_{\R \times \R^n} f(s, x) \Big( \overline{\int_\R \mathbf{1}_{2^j} (t-s) U_{t-s}^\ast [g(t, \centerdot)] (x) \, \dd t}  \Big) \, \dd (s,x)\\
& = \int_{\R \times \R^n} f(s, x) \Big( \overline{\int_\R \mathbf{1}_{2^j} (t) U_{-t}^\ast [g(s-t, \centerdot)] (x) \, \dd t}  \Big) \, \dd (s,x).
\end{align*}
Furthermore, the identity
$U_{-t}^\ast [g(s-t, \centerdot)] (x) = U_t [\tilde{g}(s-t, \centerdot)] (x^\prime, -x_n)$ 
implies
\begin{align*}
\int_{\R \times \R^n} f(s, x) \Big( \overline{\int_\R \mathbf{1}_{2^j} (t) U_t [\tilde{g}(s-t, \centerdot)] (x^\prime, -x_n) \, \dd t} &  \Big) \, \dd (s,x)\\
& = \int_{\R \times \R^n}  \tilde{f}(s, x) \overline{S_j \tilde{g}(s, x)} \, \dd (s,x).
\end{align*}
From the previous two identities we can conclude that \eqref{id:quasi-adjoint} holds.

Now that we know that \eqref{in:dispersive_L2t}, \eqref{in:non-enpoint_T_L2t} and 
\eqref{in:non-enpoint_T_L2t_dual} hold, by interpolation we obtain 
\eqref{in:2-parameter_ineq}. 
At this point, we could just ask the reader to follow the interpolation 
argument in \cite[\S 5]{zbMATH01215570} to derive \eqref{in:summation_j} from
\eqref{in:2-parameter_ineq}. However, for the sake of completeness, we reproduce the argument
here.

In \cite[Lemma 5.1]{zbMATH01215570}, Keel and Tao proved an atomic decomposition 
functions in $L^p(\R^d)$, which allows us to write
\[ f(t, x) = \sum_{k \in \Z} \alpha_k(t) f_k(t, x) \quad g(t, x) = \sum_{k \in \Z} \beta_k(t) g_k(t, x) \qquad \forall (t, x) \in \R \times \R^n,\]
where $ \| f_k \|_{L^\infty(\R \times \R^n)} + \| g_k \|_{L^\infty(\R \times \R^n)} \lesssim 2^{-k/r_n} $,
$|\supp f_k \cap \supp f_l| = |\supp g_k \cap \supp g_l| = 0$,
$|\supp f_k| + |\supp g_k| \lesssim 2^k$, and
\[ \Big( \sum_{k \in \Z} |\alpha_k(t)|^{r_n} \Big)^{1/r_n} \lesssim \| f(t, \centerdot) \|_{L^{r_n}(\R^n)}, \quad \Big( \sum_{k \in \Z} |\beta_k(t)|^{r_n} \Big)^{1/r_n} \lesssim \| g(t, \centerdot) \|_{L^{r_n}(\R^n)}.\]

Note that, using \eqref{in:2-parameter_ineq} for the atoms $\alpha_k f_k$ and $\beta_l g_l$, we have
\begin{align*}
\Big| \int_{\R \times \R^n} & S_j [\alpha_k f_k](t, x) \overline{\beta_l (t) g_l(t, x)} \, \dd (t,x) \Big| \\
& \lesssim 2^{-j\frac{n}{2}\big( \frac{1}{a} - \frac{1}{r_n} + \frac{1}{b} - \frac{1}{r_n} \big)} \| \alpha_k \|_{L^2 (\R)} 2^{k\big(\frac{1}{a} - \frac{1}{r_n}\big)} \| \beta_l \|_{L^2 (\R)} 2^{l\big(\frac{1}{b} - \frac{1}{r_n}\big)}  \\
& = 2^{(k-j\frac{n}{2}) \big(\frac{1}{a} - \frac{1}{r_n}\big)} 2^{(l-j\frac{n}{2})\big(\frac{1}{b} - \frac{1}{r_n}\big)} \| \alpha_k \|_{L^2 (\R)} \| \beta_l \|_{L^2 (\R)}.
\end{align*}

Since the previous inequality holds for every $(1/a, 1/b)$ 
in a neighbourhood of $(1/r_n, 1/r_n)$,
one can choose $a$ and $b$ so that $1/a - 1/r_n = -\varepsilon \sign(k - jn/2)$ and 
$1/b - 1/r_n = -\varepsilon \sign(l - jn/2)$ to obtain
\[ \Big| \int_{\R \times \R^n}  S_j [\alpha_k f_k](t, x) \overline{\beta_l (t) g_l(t, x)} \, \dd (t,x) \Big| \lesssim 2^{-\varepsilon (|k-j\frac{n}{2}| + |l-j\frac{n}{2}|)} \| \alpha_k \|_{L^2 (\R)} \| \beta_l \|_{L^2 (\R)}. \]

Then,
\[\sum_{j \in \Z} \Big| \int_{\R \times \R^n} S_j f(t, x) \overline{g(t, x)} \, \dd (t,x) \Big| \lesssim \sum_{j, k, l} 2^{-\varepsilon (|k-j\frac{n}{2}| + |l-j\frac{n}{2}|)} \| \alpha_k \|_{L^2 (\R)} \| \beta_l \|_{L^2 (\R)}. \]

Since
\[ \Big| k-j\frac{n}{2} \Big| + \Big| l-j\frac{n}{2} \Big| \geq \frac{1}{2} (|k - l| + |nj-(k+l)|), \]
and
\[\sum_j 2^{-\frac{\varepsilon}{2} |nj-(k+l)|} \lesssim \sum_j 2^{-\frac{\varepsilon}{2} |j|} \lesssim 1,\]
we have that
\[\sum_{j \in \Z} \Big| \int_{\R \times \R^n} S_j f(t, x) \overline{g(t, x)} \, \dd (t,x) \Big| \lesssim \sum_{k, l} 2^{-\frac{\varepsilon}{2} |k-l|}  \| \alpha_k \|_{L^2 (\R)} \| \beta_l \|_{L^2 (\R)}. \]

Using Cauchy--Schwarz in the sum in $k$, and then Young's inequality to deal with the 
convolution, we obtain that
\[\sum_{j \in \Z} \Big| \int_{\R \times \R^n} S_j f(t, x) \overline{g(t, x)} \, \dd (t,x) \Big| \lesssim \sum_j 2^{-\frac{\varepsilon}{2} |j|} \Big( \sum_k \| \alpha_k \|_{L^2 (\R)}^2 \Big)^\frac{1}{2} \Big( \sum_l \| \beta_l \|_{L^2 (\R)}^2 \Big)^\frac{1}{2}.\]

Finally,
\begin{align*}
\sum_{j \in \Z} \Big| \int_{\R \times \R^n} & S_j f(t, x) \overline{g(t, x)} \, \dd (t,x) \Big| \lesssim \big\| \big( \sum_k |\alpha_k|^2  \big)^{1/2} \big\|_{L^2 (\R)} \big\| \big( \sum_l |\beta_j|^2  \big)^{1/2} \big\|_{L^2 (\R)} \\
& \lesssim \big\| \big( \sum_k |\alpha_k|^{r_n}  \big)^{1/r_n} \big\|_{L^2 (\R)} \big\| \big( \sum_l |\beta_j|^{r_n}  \big)^{1/r_n} \big\|_{L^2 (\R)} \\
& \lesssim \| f \|_{L^2 (\R; L^{r_n}(\R^n))} \| g \|_{L^2 (\R; L^{r_n}(\R^n))},
\end{align*}
which prove the inequality \eqref{in:summation_j}.
\end{proof}

\section{Bounds for the Birman--Schwinger operator}\label{sec:BSch_op}
The goal of this section is to prove that the norm of the Birman--Schwinger operator
$M_W \circ S_\nu \circ M_{|W|}$, with $W$ given by \eqref{def:W}, tends to vanish as
$|\nu|$ increase. Actually, we will
prove a more general result for an operator of the same form 
$M_{W_1} \circ S_\nu \circ M_{W_2}$---with $W_1$ and $W_2$ unrelated.

\begin{proposition}\label{prop:MSM}\sl Consider $S, T \in \R$ such that $S<T$.
Let $W_1$ and $W_2$ be two measurable functions in 
$\R\times\R^n$ with support in $[S,T] \times \R^n$.

For $j \in \{1, 2\}$, assume that
\begin{enumerate}[label=\textnormal{(\alph*)}, ref=\alph*]
\item $|W_j|^2 \in L^a(\R; L^b (\R^n))$ with $(a, b) \in [1, \infty] \times [1, \infty]$ satisfying \eqref{cond:V_ab},

\item there is $R_j > 0$ so that
\begin{equation}
\label{cond:decay_lines}
\sup_{\omega \in \Sph^{n-1}} \int_\R \| \mathbf{1}_{>R_j} |W_j|^2 \|_{L^\infty (\R \times H_{\omega, s})} \, \dd s < \infty,
\end{equation}
and $\mathbf{1}_{\leq R_j} W_j \in C(\R; L^n (\R^n))$ if $(a, b) = (\infty, n/2)$ for
$n\geq 3$.
\end{enumerate}

Then,
\[ \lim_{|\nu| \to \infty} \| M_{W_1} \circ S_\nu \circ M_{W_2} \|_{\mathcal{L}[L^2(\R \times \R^n)]} = 0. \]
\end{proposition}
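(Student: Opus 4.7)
The plan is to split each $W_j$ as $W_j = W_j^{\mathrm{in}} + W_j^{\mathrm{out}}$, with $W_j^{\mathrm{in}} := \mathbf{1}_{\leq R_j} W_j$ compactly supported in $[S,T]\times\{x\in\R^n: |x|\leq R_j\}$ and $W_j^{\mathrm{out}} := \mathbf{1}_{>R_j} W_j$ inheriting the decay-along-hyperplanes hypothesis \eqref{cond:decay_lines}. Expanding $M_{W_1}\circ S_\nu\circ M_{W_2}$ bilinearly gives four pieces, each estimated by a combination of the two tools available for $S_\nu$: the gain estimate of the \cref{th:gain_Snu} (which produces factors of $1/|\nu|$), and the scale-invariant Strichartz bound of the \cref{th:non-gain_Snu} (which produces uniform-in-$\nu$ bounds proportional to products of $L^{2a}(L^{2b})$-norms of the factors).

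For the purely outer piece the gain suffices. Dualising, slicing $\R^n$ by the hyperplanes $H_{\hat\nu,s}$ via $\int_{\R^n} h\,\dd x = \int_\R\int_{H_{\hat\nu,s}} h\,\dd\sigma_{\hat\nu,s}\,\dd s$, applying Cauchy--Schwarz on each slice together with the pointwise estimate $|W_j^{\mathrm{out}}|^2 \leq \||W_j^{\mathrm{out}}|^2\|_{L^\infty(\R\times H_{\hat\nu,s})}$, then Cauchy--Schwarz in $s$, and finally the \cref{th:gain_Snu}, one obtains
\[
\|M_{W_1^{\mathrm{out}}}\circ S_\nu\circ M_{W_2^{\mathrm{out}}}\|_{\mathcal{L}[L^2(\R\times\R^n)]} \leq \frac{C}{|\nu|}\prod_{j=1,2}\left(\int_\R \||W_j^{\mathrm{out}}|^2\|_{L^\infty(\R\times H_{\hat\nu,s})}\,\dd s\right)^{1/2},
\]
which is $O(1/|\nu|)$ by hypothesis (b). For the remaining three pieces I would further decompose each inner part as $W_j^{\mathrm{in}} = \phi_j^M + \psi_j^M$, where $\phi_j^M := W_j^{\mathrm{in}}\mathbf{1}_{|W_j^{\mathrm{in}}|\leq M}$ is bounded by $M$ and supported in $[S,T]\times\{x\in\R^n: |x|\leq R_j\}$, and expand once more. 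Subpieces built only from $\phi_j^M$'s and $W_k^{\mathrm{out}}$'s are handled by the same slicing argument---using the trivial bound $\||\phi_j^M|^2\|_{L^\infty(\R\times H_{\hat\nu,s})}\leq M^2\mathbf{1}_{|s|\leq R_j}$---yielding a contribution bounded by $C(M)/|\nu|$. Any subpiece containing some $\psi_j^M$ is controlled by the \cref{th:non-gain_Snu} and Hölder, producing a uniform-in-$\nu$ bound of the form $C\|\psi_j^M\|_{L^{2a}(L^{2b})}\|W_k\|_{L^{2a}(L^{2b})}$.

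The main obstacle is showing that $\|\psi_j^M\|_{L^{2a}(L^{2b})}\to 0$ as $M\to\infty$. When $2a,2b<\infty$ this follows immediately from dominated convergence. The case $b=\infty$ (hence $a=1$, so $2a=2$, $2b=\infty$) still works directly, as the relevant quantity reduces to $\int_S^T\|W_j^{\mathrm{in}}(t,\cdot)\|_{L^\infty}^2\mathbf{1}_{\|W_j^{\mathrm{in}}(t,\cdot)\|_{L^\infty}>M}\,\dd t$, which tends to zero by dominated convergence since $W_j^{\mathrm{in}}\in L^2([S,T];L^\infty)$. The delicate case is the endpoint $(a,b)=(\infty,n/2)$, where one needs $\sup_{t\in[S,T]}\|W_j^{\mathrm{in}}(t,\cdot)\mathbf{1}_{|W_j^{\mathrm{in}}(t,\cdot)|>M}\|_{L^n}\to 0$. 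This is where the continuity hypothesis $\mathbf{1}_{\leq R_j}W_j\in C(\R;L^n(\R^n))$ enters: it makes $\{W_j^{\mathrm{in}}(t,\cdot):t\in[S,T]\}$ a compact, hence equi-integrable, subset of $L^n(\R^n)$, upgrading pointwise-in-$t$ convergence to uniform convergence. Once this is in hand, sending first $|\nu|\to\infty$ for fixed $M$ and then $M\to\infty$ closes the argument.
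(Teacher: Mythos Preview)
Your argument is correct and follows essentially the same strategy as the paper: decompose each $W_j$ into a bounded-plus-outer part (handled via the gain estimate of \cref{th:gain_Snu}) and a remainder small in $L^{2a}(\R;L^{2b}(\R^n))$ (handled via \cref{th:non-gain_Snu}). The paper streamlines your two-stage split into a single decomposition $W_j^\sharp = \mathbf{1}_{\leq R_j}\mathbf{1}_{|W_j|\leq\lambda}W_j + \mathbf{1}_{>R_j}W_j$, $W_j^\flat = W_j - W_j^\sharp$, and at the endpoint replaces your compactness/equi-integrability argument by an explicit piecewise-constant-in-time approximation of $\mathbf{1}_{\leq R_j}W_j$; both routes exploit the continuity hypothesis to the same end.
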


\begin{remark} If we fix a direction $\hat{\nu} \in \Sph^{n-1}$, and we let $\nu$ denote any 
vector in $\R^n \setminus \{ 0 \}$ parallel to $\hat{\nu}$, then condition 
\eqref{cond:decay_lines} is only needed for $\omega = \hat{\nu}$.
\end{remark}

Before proving the \cref{prop:MSM}, it is convenient to make some observations.

As a consequence of the Cauchy--Schwarz inequality we have that
\begin{equation}
\label{in:L2_to_rays}
\int_\R \| W u \|_{L^2 (\R \times H_{\omega, s})} \, \dd s \leq \Big( \int_\R \| |W|^2 \|_{L^\infty (\R \times H_{\omega, s})} \, \dd s \Big)^{1/2} \| u \|_{L^2(\R \times \R^n)}.
\end{equation}
Then, by duality
\begin{equation}
\label{in:dual_of_L2_to_rays}
\| W u \|_{L^2(\R \times \R^n)} \leq \Big( \int_\R \| |W|^2 \|_{L^\infty (\R \times H_{\omega, s})} \, \dd s \Big)^{1/2} \esssup_{s \in \R} \| u \|_{L^2 (\R \times H_{\omega, s})}.
\end{equation}

It follows from Hölder's inequality that
\begin{equation}
\label{in:L2_to_LqLr}
\| W u \|_{L^q(\R; L^r (\R^n))} \leq \| |W|^2 \|_{L^a(\R; L^b (\R^n))}^{1/2} \| u \|_{L^2(\R \times \R^n)},
\end{equation}
with $(q, r) \in [1, 2] \times [1, 2]$ and $(a, b) \in [1, \infty] \times [1, \infty]$ 
satisfying the relation \eqref{id:indices4Vqr}, 
or equivalently \eqref{id:indices4Vab}. 
Once again, by duality we obtain 
\begin{equation}
\label{in:dual_of_L2_to_LqLr}
\| W u \|_{L^2(\R \times \R^n)} \leq \| |W|^2 \|_{L^a(\R; L^b (\R^n))}^{1/2} \| u \|_{L^{q^\prime}(\R; L^{r^\prime} (\R^n))}
\end{equation}
with $q^\prime$ and $r^\prime$ denoting 
the conjugate exponents of $q$ and $r$.

\begin{proof}[Proof of the \cref{prop:MSM}]
We will decompose $W_j$ as $W_j = W_j^\sharp + W_j^\flat$, where the choices of $W_j^\sharp$ and 
$W_j^\flat$ depend on whether $(a, b) \neq (\infty, n/2)$ for $n \geq 2$ or 
$(a, b) = (\infty, n/2)$ for $n \geq 3$. Then,
\begin{equation}\label{in:sharp_flat}
\begin{aligned}
\| W_1 S_\nu (W_2 u) \|_{L^2(\R \times \R^n)} \leq & \| W_1^\sharp S_\nu (W_2^\sharp u) \|_{L^2(\R \times \R^n)} + \| W_1^\sharp S_\nu (W_2^\flat u) \|_{L^2(\R \times \R^n)} \\
& + \| W_1^\flat S_\nu (W_2 u) \|_{L^2(\R \times \R^n)}.
\end{aligned}
\end{equation}

On the one hand, the inequalities \eqref{in:L2_to_rays} and \eqref{in:dual_of_L2_to_rays}, 
together with the 
\cref{th:gain_Snu}, imply that
\begin{align*}
& \| W_1^\sharp S_\nu (W_2^\sharp u) \|_{L^2(\R \times \R^n)} \\ & \lesssim \frac{1}{|\nu|} \Big( \int_\R \| |W_1^\sharp|^2 \|_{L^\infty (\R \times H_{\hat{\nu}, s})} \, \dd s \Big)^{1/2} \Big( \int_\R \| |W_2^\sharp|^2 \|_{L^\infty (\R \times H_{\hat{\nu}, s})} \, \dd s \Big)^{1/2} \| u \|_{L^2(\R \times \R^n)}.
\end{align*}

On the other hand, the inequalities \eqref{in:L2_to_LqLr} and \eqref{in:dual_of_L2_to_LqLr}, 
together with the \cref{th:non-gain_Snu}, imply that
\begin{align*}
 \| W_1^\sharp S_\nu (W_2^\flat u) \|_{L^2(\R \times \R^n)} & + \| W_1^\flat S_\nu (W_2 u) \|_{L^2(\R \times \R^n)} \\ 
& \lesssim \| |W_1^\sharp|^2 \|_{L^a(\R; L^b (\R^n))}^{1/2}  \| |W_2^\flat|^2 \|_{L^a(\R; L^b (\R^n))}^{1/2} \| u \|_{L^2(\R \times \R^n)} \\
& \quad + \| |W_1^\flat|^2 \|_{L^a(\R; L^b (\R^n))}^{1/2}  \| |W_2|^2 \|_{L^a(\R; L^b (\R^n))}^{1/2} \| u \|_{L^2(\R \times \R^n)}.
\end{align*}

Whenever $(a, b) \neq (\infty, n/2)$ with $n \geq 2$, we choose,
for $\lambda > 0$ and $j \in \{1, 2\}$, the functions
\[ W_j^\sharp = \mathbf{1}_{\leq R_j} \mathbf{1}_{|W_j|\leq \lambda} W_j + \mathbf{1}_{> R_j} W_j, \qquad W_j^\flat = W_j - W_j^\sharp, \]
where $\mathbf{1}_{|W_j|\leq \lambda}$ denote the characteristic function of
$\{ (t, x) \in \R \times \R^n : |W_j(t,x)| \leq \lambda \}$ respectively.

With this choice, one can check that
\[ \| |W_j^\sharp|^2 \|_{L^\infty (\R \times H_{\hat{\nu}, s})} \leq \lambda^2 \mathbf{1}_{[-R_j, R_j]}(s) + \| \mathbf{1}_{>R_j} |W_j|^2 \|_{L^\infty (\R \times H_{\hat{\nu}, s})} \]
which in turns implies that
\[ \Big( \int_\R \| |W_j^\sharp|^2 \|_{L^\infty (\R \times H_{\hat{\nu}, s})} \, \dd s \Big)^{1/2} \leq \lambda (2R_j)^{1/2} + \Big(  \int_\R \| \mathbf{1}_{>R_j} |W_j|^2 \|_{L^\infty (\R \times H_{\hat{\nu}, s})} \, \dd s \Big)^{1/2}. \]

Furthermore, $|W_1^\sharp (t,x)| \leq |W_1 (t,x)|$ for almost every 
$(t,x) \in \R \times \R^n$, and
\[\| |W_j^\flat|^2 \|_{L^a(\R; L^b (\R^n))}^{1/2} = \| \mathbf{1}_{\leq R_j} \mathbf{1}_{|W_j|> \lambda} |W_j|^2 \|_{L^a(\R; L^b (\R^n))}^{1/2}. \]
As suggested by the notation, $\mathbf{1}_{|W_j|> \lambda}$ denotes the
characteristic function of the set 
$\{ (t, x) \in \R \times \R^n : |W_j(t,x)| > \lambda \}$.


Gathering these upper bounds and plugging into the \eqref{in:sharp_flat}, we obtain that 
there exists a constant $C > 0$ that only depends on $n$, $(a, b)$ and $R_j$,
as well as on $\| |W_j|^2 \|_{L^a(\R; L^b (\R^n))}$ and the quantity 
in \eqref{cond:decay_lines} for $j \in \{1, 2\}$, such that
\[ \| M_{W_1} \circ S_\nu \circ M_{W_2} \|_{\mathcal{L}[L^2(\R \times \R^n)]} \leq C \Big( \frac{\lambda^2 + 1 }{|\nu|} + \sum_{j \in \{ 1,2 \}} \| \mathbf{1}_{|W_j|> \lambda} |W_j|^2 \|_{L^a(\R; L^b (\R^n))}^{1/2} \Big) \]
for all $\nu \in \R^n \setminus \{ 0 \}$ and $\lambda > 0$.

Finally, to see that
\[ \lim_{|\nu| \to \infty} \| M_{W_1} \circ S_\nu \circ M_{W_2} \|_{\mathcal{L}[L^2(\R \times \R^n)]} = 0 \]
it is enough to choose $\lambda^2 = |\nu|^{1/2}$ and make $|\nu|$ tend to $\infty$.

We now turn our attention to the endpoint $(a, b) = (\infty, n/2)$ for $n \geq 3$.
For any $\varepsilon > 0$, one can find a partition 
$S = t_0 < t_1 < \dots < t_m < t_{m+1} = T$ for which 
the piecewise constant-in-time approximation satisfies
\[
\sup_{t \in \R} \Big\| \mathbf{1}_{\leq R_j} \Big( W_j(t, \centerdot) - \sum_{k=1}^m \mathbf{1}_{(t_{k-1}, t_k]}(t) W_j(t_k, \centerdot) \Big) \Big\|_{L^n(\R^n)} \leq \varepsilon.
\]

For $\lambda > 0$ and $j \in \{1, 2\}$, we introduce the functions
\[ W_j^\sharp =  \mathbf{1}_{\leq R_j} \sum_{k = 1}^m \mathbf{1}_{(t_{k-1}, t_k]} \mathbf{1}_{|W_j(t_k, \centerdot)| \leq \lambda} W_j(t_k, \centerdot) + \mathbf{1}_{> R_j} W_j, \qquad W_j^\flat = W_j - W_j^\sharp, \]
where $\mathbf{1}_{|W_j(t_k, \centerdot)| \leq \lambda}$ denotes the
indicator function of $\{ x \in \R^n : |W_j(t_k,x)| \leq \lambda \}$.

With this choice, we have again that
\[ \| |W_j^\sharp|^2 \|_{L^\infty (\R \times H_{\hat{\nu}, s})} \leq \lambda^2 \mathbf{1}_{[-R_j, R_j]}(s) + \| \mathbf{1}_{>R_j} |W_j|^2 \|_{L^\infty (\R \times H_{\hat{\nu}, s})} \]
which in turns implies that
\[ \Big( \int_\R \| |W_j^\sharp|^2 \|_{L^\infty (\R \times H_{\hat{\nu}, s})} \, \dd s \Big)^{1/2} \leq \lambda (2R_j)^{1/2} + \Big(  \int_\R \| \mathbf{1}_{>R_j} |W_j|^2 \|_{L^\infty (\R \times H_{\hat{\nu}, s})} \, \dd s \Big)^{1/2}. \]

Furthermore $\| |W_1^\sharp|^2 \|_{L^\infty(\R; L^{n/2} (\R^n))} \leq \| |W_1|^2 \|_{L^\infty(\R; L^{n/2} (\R^n))}$, and
\[ W_j^\flat = \mathbf{1}_{\leq R_j} \Big( W_j - \sum_{k = 1}^m \mathbf{1}_{(t_{k-1}, t_k]} W_j(t_k, \centerdot)\Big) + \mathbf{1}_{\leq R_j} \sum_{k = 1}^m \mathbf{1}_{(t_{k-1}, t_k]} \mathbf{1}_{|W_j(t_k, \centerdot)| > \lambda} W_j(t_k, \centerdot) \]
where $\mathbf{1}_{|W_j(t_k, \centerdot)| > \lambda}$ denotes the
indicator function of $\{ x \in \R^n : |W_j(t_k,x)| > \lambda \}$.

Hence,
\begin{align*}
\| |W_j^\flat|^2 \|_{L^\infty(\R; L^{n/2} (\R^n))}^{1/2} & \leq \Big\| \mathbf{1}_{\leq R_j} \Big( W_j - \sum_{k=1}^m \mathbf{1}_{(t_{k-1}, t_k]} W_j(t_k, \cdot) \Big) \Big\|_{L^\infty(\R; L^n (\R^n))} \\
& + \Big\| \mathbf{1}_{\leq R_j} \Big| \sum_{k = 1}^m \mathbf{1}_{(t_{k-1}, t_k]} \mathbf{1}_{|W_j(t_k, \centerdot)| > \lambda} W_j(t_k, \centerdot) \Big|^2 \Big\|_{L^\infty(\R; L^{n/2} (\R^n))}^{1/2}.
\end{align*}

Observe that
\begin{align*}
\Big\| \mathbf{1}_{\leq R_j} \Big| \sum_{k = 1}^m \mathbf{1}_{(t_{k-1}, t_k]} & \mathbf{1}_{|W_j(t_k, \centerdot)| > \lambda} W_j(t_k, \centerdot) \Big|^2 \Big\|_{L^\infty(\R; L^{n/2} (\R^n))}^{1/2} \\
& = \max_{k \in\{ 1, \dots, m \}} \| \mathbf{1}_{\leq R_j} \mathbf{1}_{|W_j(t_k, \centerdot)| > \lambda} | W_j(t_k, \centerdot) |^2 \|_{L^{n/2}(\R^n)}^{1/2},
\end{align*}
and $\lambda$ can be chosen so that the right-hand side of the previous identity becomes
\[\max_{k \in\{ 1, \dots, m \}} \| \mathbf{1}_{\leq R_j} \mathbf{1}_{|W_j(t_k, \centerdot)| > \lambda} | W_j(t_k, \centerdot) |^2 \|_{L^{n/2}(\R^n)}^{1/2} < \varepsilon.\]
Therefore,
\[ \| |W_j^\flat|^2 \|_{L^\infty(\R; L^{n/2} (\R^n))}^{1/2} \leq 2 \varepsilon.\]


Gathering these upper bounds and plugging into the \eqref{in:sharp_flat}, we obtain that 
there exists a constant $C > 0$ that only depends on $n$ and $R_j$,
as well as on the norm of $|W_j|^2$ in $L^\infty(\R; L^{n/2} (\R^n))$ and the quantity 
in \eqref{cond:decay_lines} for $j \in \{1, 2\}$, such that
\[ \| M_{W_1} \circ S_\nu \circ M_{W_2} \|_{\mathcal{L}[L^2(\R \times \R^n)]} \leq C \Big( \frac{\lambda^2 + 1 }{|\nu|} + \varepsilon \Big) \]
for all $\nu \in \R^n \setminus \{ 0 \}$.

Choosing $\nu$ so that $|\nu|$ is sufficiently large,
we obtain that
\[ \| M_{W_1} \circ S_\nu \circ M_{W_2} \|_{\mathcal{L}[L^2(\R \times \R^n)]} \lesssim \varepsilon. \]

This already ensures that the statement also holds for the endpoint with the continuity as an 
extra assumption.
\end{proof}

\section{Hamiltonians with identical initial-to-final-state maps}
\label{sec:identical_hamiltonians}

We begin this section by recalling key facts about the well-posedness of the initial-value 
problem \eqref{pb:IVP}. Following this, we prove an integral identity that establishes the 
orthogonality relation \eqref{eq:ortho_intro}. Finally, we derive a dual description of 
\eqref{eq:ortho_intro}, a formulation essential for later extending the orthogonality relation to 
CGO solutions.


Consider
\[V = V^\sharp + V^\flat\] 
with $V^\sharp \in L^1 ((0, T); L^\infty(\R^n)) $ and 
$V^\flat \in L^a ((0, T); L^b(\R^n))$ with $n \in \N$ and
$(a, b) \in [1, \infty] \times [1, \infty]$ satisfying
\eqref{cond:V_ab}. At the endpoint $(a, b) = (\infty, n/2)$ with $n \geq 3$, we further assume
that $V^\flat \in C ([0, T]; L^{n/2}(\R^n))$.

Then, for every $f \in L^2(\R^n)$, there exists a unique
$u \in C([0,T]; L^2(\R^n)) \cap L^{q^\prime} ((0,T); L^{r^\prime} (\R^n))$ 
solving the problem \eqref{pb:IVP}---see for example 
\cite{zbMATH02222227,zbMATH02204588,zbMATH05013664}\footnote{This solution
is obtained by solving an integral equation that is a consequence of Duhamel's principle.
One can check that this solution $u$ satisfies the equation 
$(i\partial_\tm  + \Delta - V) u = 0$ in $(0, T) \times \R^n$ in the sense of 
distributions.}---with $(q^\prime, r^\prime) \in [2, \infty] \times [2, \infty]$
satisfying \eqref{id:indicesVprimes}.

Moreover, the linear map 
\[ f \in L^2(\R^n) \longmapsto u \in C([0, T]; L^2(\R^n)) \cap L^{q^\prime} ((0,T); L^{r^\prime} (\R^n)) \]
is bounded. Consequently, for every $t \in [0,T]$, the linear maps
\[ \mathcal{U}_t : f \in L^2(\R^n) \mapsto u(t, \centerdot) \in L^2(\R^n) \]
is also bounded.

One can check that the pair 
$(q^\prime, r^\prime) \in [2, \infty] \times [2, \infty]$
is an admissible Strichartz pair that
satisfies the relation \eqref{id:Strichartz_dual-indeces}.

The assumption $ V^\flat \in C ([0, T]; L^{n/2}(\R^n))$ can be replaced
by a smallness condition the quantity
$\| V^\flat \|_{L^\infty ((s, s+\delta); L^{n/2}(\R^n))}$
for some $\delta > 0$ and all $s \in [0, T-\delta]$---see \cite{zbMATH02204588}.

In the view of the relation between the functional spaces involved in the initial-value 
problem \eqref{pb:IVP}, it seems convenient to introduce some notation:

Let $X$ denote the set
\[ X = \{ F^\sharp + F^\flat : F^\sharp \in L^1((0,T); L^2(\R^n)) \enspace \textnormal{and} \enspace F^\flat \in L^q((0,T); L^r(\R^n)) \},\]
where $(q, r) \in [1, 2] \times [1, 2]$ satisfies \eqref{id:Strichartz_indeces}. Endowed
with the norm
\[ \| F \|_X = \inf \{ \| F^\sharp \|_{L^1((0,T); L^2(\R^n))} + \| F^\flat \|_{L^q((0,T); L^r(\R^n))} : F = F^\sharp + F^\flat \},\]
the normed space $(X, \| \centerdot \|_X)$ becomes Banach.

Let $X^\star$ denote the set
\[X^\star = C([0,T]; L^2(\R^n)) \cap L^{q^\prime} ((0,T); L^{r^\prime} (\R^n)) \]
where $q^\prime$ and $r^\prime$ denote the conjugate exponents of $q$ and $r$. Endowed with
the norm
\[ \| u \|_{X^\star} = \max \Big( \sup_{t \in [0, T]} \| u(t, \centerdot) \|_{L^2(\R^n)}, \| u \|_{L^{q^\prime} ((0,T); L^{r^\prime} (\R^n))} \Big), \]
the normed space $(X^\star, \| \centerdot \|_{X^\star})$ becomes Banach and can be identified
with a subspace of the dual of $(X, \| \centerdot \|_X)$.

Consider $(a, b) \in [1, \infty] \times [1, \infty]$ satisfies \eqref{cond:V_ab}.
Let $Y$ denote the set
\[ Y = \{ V^\sharp + V^\flat : V^\sharp \in L^1((0,T); L^\infty(\R^n)) \enspace \textnormal{and} \enspace V^\flat \in L^a((0,T); L^b(\R^n))  \}\]
if $(a, b) \neq (\infty, n/2)$, or
\[ Y = \{ V^\sharp + V^\flat : V^\sharp \in L^1((0,T); L^\infty(\R^n)) \enspace \textnormal{and} \enspace V^\flat \in C([0,T]; L^{n/2}(\R^n))  \}\]
if $(a, b) = (\infty, n/2)$ and $n \geq 3$. Endowed
with the norm
\[ \| V \|_Y = \inf \{ \| V^\sharp \|_{L^1((0,T); L^\infty(\R^n))} + \| V^\flat \|_{L^a((0,T); L^b(\R^n))} : V = V^\sharp + V^\flat \},\]
the normed space $(Y, \| \centerdot \|_Y)$ becomes Banach. 

Throughout this section,
the pairs $(a, b)$, $(q, r)$ and $(q^\prime, r^\prime)$ corresponding to $Y$, $X$ and $X^\star$
respectively, are always related by the identity 
\eqref{id:indices4Vab}.

\subsection{An integral identity for physical solutions}

The main goal here is to state an integral formula that relates the 
initial-to-final-state maps with their corresponding potentials. The
identity consists of a straightforward generalization of the one proved
in \cite{10.1063/5.0152372} for potentials $V_1$ and $V_2$ in $L^1((0,T); L^\infty(\R^n))$.

\begin{proposition}\label{prop:orthogonality} \sl Let $V_1$ and $V_2$ belong to $Y$.
For every $f, g \in L^2(\R^n)$ we have that
\begin{equation}
i \int_{\R^n} (\mathcal{U}_T^1 - \mathcal{U}_T^2) f \, \overline{g} \; = \int_\Sigma (V_1 - V_2)u_1 \overline{v_2}\,,
\label{id:integral_identity}
\end{equation}
where $u_1 \in X^\star$ is the physical solution of \eqref{pb:IVP} with potential $V_1$ and initial 
data $f$, and $v_2 \in X^\star$ is the physical solution of the following final-value problem 
\begin{equation}
\label{pb:final_overV2}
	\left\{
		\begin{aligned}
		& i\partial_\tm v_2 = - \Delta v_2 + \overline{V_2} v_2 & & \textnormal{in} \, \Sigma, \\
		& v_2(T, \centerdot) = g &  & \textnormal{in} \, \R^n.
		\end{aligned}
	\right.
\end{equation}
\end{proposition}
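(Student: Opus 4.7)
The plan is to introduce $u_2 \in X^\star$, the physical solution of the forward IVP \eqref{pb:IVP} with potential $V_2$ and initial data $f$, so that $u_2(T, \centerdot) = \mathcal{U}_T^2 f$. The identity will follow from studying the pairing $\langle u_j(t, \centerdot), v_2(t, \centerdot)\rangle_{L^2(\R^n)}$ for $j = 1,2$ and exploiting that $u_j$ and $v_2$ solve adjoint equations in the sense $(i\partial_\tm + \Delta - V_j)u_j = 0$ and $(i\partial_\tm + \Delta - \overline{V_2})v_2 = 0$ respectively. Formally, integrating by parts in $\R^n$ cancels the Laplacians and yields
\begin{equation*}
\frac{\dd}{\dd t}\int_{\R^n} u_j \overline{v_2}\,\dd x = -i \int_{\R^n}(V_j - V_2)\,u_j \overline{v_2}\,\dd x.
\end{equation*}
Integrating from $0$ to $T$, the $j = 2$ case gives $\int_{\R^n}\mathcal{U}_T^2 f\,\overline{g}\, = \int_{\R^n} f\,\overline{v_2(0, \centerdot)}\,$, while the $j = 1$ case produces $\int_{\R^n}\mathcal{U}_T^1 f\,\overline{g}\, - \int_{\R^n} f\,\overline{v_2(0, \centerdot)}\, = -i\int_\Sigma (V_1 - V_2)u_1 \overline{v_2}\,$. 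Subtracting and multiplying through by $i$ delivers \eqref{id:integral_identity}.

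The first task in the rigorous justification is to verify that the right-hand side of \eqref{id:integral_identity} is well defined, i.e.\ that $(V_1 - V_2)u_1\overline{v_2}\in L^1(\Sigma)$. Using the defining decomposition $V_j = V_j^\sharp + V_j^\flat$ with $V_j^\sharp \in L^1((0,T); L^\infty(\R^n))$ and $V_j^\flat \in L^a((0,T); L^b(\R^n))$ (or $C([0,T]; L^{n/2}(\R^n))$ at the endpoint), Hölder's inequality combined with the index relations \eqref{id:indices4Vab} yields
\begin{equation*}
\Big|\int_\Sigma (V_1-V_2) u_1 \overline{v_2}\,\Big| \lesssim \|V_1 - V_2\|_Y\, \|u_1\|_{X^\star}\, \|v_2\|_{X^\star},
\end{equation*}
so the trilinear pairing is continuous in $Y\times X^\star \times X^\star$.

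The second step is a density argument. Approximate $V_1$ and $V_2$ by smooth, compactly supported potentials $V_j^{(k)}$ that converge to $V_j$ in $Y$; by the well-posedness results of Ionescu--Kenig \cite{zbMATH02204588} and the Strichartz bounds underlying them, the corresponding solutions $u_1^{(k)} \in X^\star$ and $v_2^{(k)} \in X^\star$ converge to $u_1$ and $v_2$ in $X^\star$. For these smooth data both the pointwise PDE and the integration by parts in $\R^n$ are unproblematic (boundary terms at spatial infinity vanish by the smoothness and the $L^2$ decay), so the formal identity holds for the approximations. Passing to the limit on both sides using the trilinear bound of the previous paragraph---together with the fact that $f\mapsto\mathcal{U}_T^j f$ depends continuously on $V_j$ through the Duhamel formula---gives the identity for $V_1, V_2\in Y$.

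The main obstacle is the endpoint $(a, b) = (\infty, n/2)$, $n \geq 3$, where mere $L^\infty$--$L^{n/2}$ integrability does not suffice to produce good approximations in $Y$; here one must exploit the assumed continuity in time to build approximants $V_j^{(k)}\in C([0,T]; L^{n/2}(\R^n))$ converging to $V_j$ in that norm (e.g.\ by convolving in space with a mollifier and using uniform continuity in $t$). Once this refinement is made, the same scheme as in \cite{10.1063/5.0152372} applies verbatim.
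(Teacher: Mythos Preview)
Your overall strategy---introducing $u_2$, writing the bilinear pairings for $j=1,2$, and subtracting---is exactly what the paper does. The difference lies in how you make the formal computation rigorous. The paper does \emph{not} approximate the potentials; instead it isolates a single integration-by-parts identity (\cref{lem:integration_by_parts}): for any $u,v\in X^\star$ with $(i\partial_\tm+\Delta)u,(i\partial_\tm+\Delta)v\in X$,
\[
\int_\Sigma\bigl[(i\partial_\tm+\Delta)u\,\overline v - u\,\overline{(i\partial_\tm+\Delta)v}\bigr]
= i\int_{\R^n}\bigl[u(T,\centerdot)\overline{v(T,\centerdot)} - u(0,\centerdot)\overline{v(0,\centerdot)}\bigr],
\]
proved by mollifying and cutting off $u$ and $v$ themselves (not the potentials). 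Once this lemma is in hand, the proof of \eqref{id:integral_identity} is three lines: apply the identity with $(u_j,v_2)$ for $j=1,2$, subtract, and use $u_1(0,\centerdot)=u_2(0,\centerdot)=f$. This has two advantages over your route: it avoids the continuous-dependence-on-the-potential step you invoke (which is true but needs the fixed-point argument redone with care, especially on long time intervals where one must iterate over subintervals), and the lemma is reused verbatim later in the paper (\cref{lem:orthogonal_rewardPB}, \cref{lem:orthogonal_forwardPB}). Your density-in-$Y$ argument is correct in principle, but the stability claim ``solutions converge in $X^\star$ when potentials converge in $Y$'' is doing real work that you have only asserted.
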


In \cite{10.1063/5.0152372}, we showed that the previous identity for potentials 
in $L^1((0,T); L^\infty(\R^n))$ 
follows from a suitable integration-by-parts formula. 
We now state the corresponding integration by parts in our context:

\begin{proposition}\label{lem:integration_by_parts} \sl Consider 
$u , v \in X^\star$ such that $(i\partial_\tm + \Delta) u $ and $(i\partial_\tm + \Delta) v $
belong to $X$.
Then, we have
\[\int_\Sigma \big[ (i\partial_\tm + \Delta) u \overline{v} - u \overline{(i\partial_\tm + \Delta)v} \big] \, = i \int_{\R^n} \big[ 
u(T, \centerdot) \overline{v (T,\centerdot)} - u(0, \centerdot) \overline{v (0,\centerdot)} \big] \,. \]
\end{proposition}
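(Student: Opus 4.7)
The plan is to establish the identity first for smooth, rapidly decreasing functions by a direct computation, and then extend to the general case by density.

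To start, I would assume $u, v \in C^\infty([0,T]; \mathcal{S}(\R^n))$. A pointwise calculation gives
\[
(i\partial_\tm + \Delta) u \, \overline{v} - u \, \overline{(i\partial_\tm + \Delta)v} = i \partial_\tm (u \overline{v}) + \nabla \cdot (\overline{v} \nabla u - u \nabla \overline{v}).
\]
Integrating this identity over $\Sigma$, the spatial divergence vanishes by the rapid decay in $x$, while the $\tm$-derivative integrates to the difference of boundary traces at $\tm = 0$ and $\tm = T$ via the fundamental theorem of calculus. This gives the stated identity in the smooth case.

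For general $u, v \in X^\star$ with $(i\partial_\tm + \Delta) u$ and $(i\partial_\tm + \Delta) v$ in $X$, I would approximate them by smooth functions $u_k, v_k$ satisfying three properties: $u_k \to u$ and $v_k \to v$ in $X^\star$; the forcings $(i\partial_\tm + \Delta) u_k$ and $(i\partial_\tm + \Delta) v_k$ converge to $(i\partial_\tm + \Delta) u$ and $(i\partial_\tm + \Delta) v$ respectively in $X$; and the traces $u_k(0, \centerdot), u_k(T, \centerdot), v_k(0, \centerdot), v_k(T, \centerdot)$ converge in $L^2(\R^n)$. Both sides of the identity are continuous under this approximation. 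The left-hand side is controlled by H\"older's inequality: any $F \in X$ written as $F^\sharp + F^\flat$ pairs against $v \in X^\star$ to yield a bounded bilinear form on $X \times X^\star$, so convergence in $X$ against a fixed $v \in X^\star$ (and the analogous statement with the roles swapped) passes to the limit. The right-hand side is manifestly continuous since it only depends on traces in $L^2(\R^n)$.

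The main obstacle is constructing the approximation. Spatial mollification is routine, because convolution in $x$ commutes with $i\partial_\tm + \Delta$ and preserves the mixed-norm structure of $X^\star$ and $X$. The delicate step is the time regularization, which must preserve the traces at $\tm = 0$ and $\tm = T$. My plan is to extend $u$ outside $[0,T]$ by the homogeneous Schr\"odinger flow starting from $u(0, \centerdot)$ for $\tm < 0$ and from $u(T, \centerdot)$ for $\tm > T$; this keeps $(i\partial_\tm + \Delta) u$ supported in $[0,T]$ and in $X$, so that time-convolution with a standard mollifier supplies smoothness in $\tm$ while the traces stabilize in $L^2(\R^n)$. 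A small additional care is needed at the endpoint $(a, b) = (\infty, n/2)$, where the approximation has to remain compatible with the continuity-in-time assumption underlying the well-posedness of \eqref{pb:IVP}.
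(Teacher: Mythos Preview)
Your overall strategy---prove the identity on a dense class and pass to the limit via the $X\times X^\star$ pairing---is exactly the paper's. The paper, however, implements the approximation differently: it first shrinks the time interval to $\Sigma_\delta=(\delta,T-\delta)\times\R^n$ (taking $\delta\to 0$ at the very end), and on $\Sigma_\delta$ it sets $w_\varepsilon=\chi^{R(\varepsilon)}\,(\varphi_\varepsilon\ast w)$ with a space--time mollifier $\varphi_\varepsilon$ \emph{and} a spatial cutoff $\chi^{R(\varepsilon)}$. Your free-flow extension in time is a clean alternative to the $\delta$-shrinking and works for the same reasons.

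There is one genuine gap. Your base case is stated for $u,v\in C^\infty([0,T];\mathcal S(\R^n))$ and you invoke ``rapid decay in $x$'' to kill the divergence term, but your approximation scheme (spatial mollification only) does not land in $\mathcal S(\R^n)$: convolution with a compactly supported bump yields $C^\infty([0,T];H^\infty(\R^n))$, not Schwartz. You have two fixes. Either (i) add a spatial truncation $\chi^{R}$, in which case $(i\partial_\tm+\Delta)(\chi^{R} u_\varepsilon)$ acquires commutator terms $2\nabla\chi^{R}\cdot\nabla u_\varepsilon+\Delta\chi^{R}\,u_\varepsilon$; these do \emph{not} vanish for free, and the paper disposes of them by coupling the scales so that $\varepsilon R(\varepsilon)\to\infty$. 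Or (ii) keep mollification only and relax the base case to $u,v\in C^1([0,T];L^2)\cap C([0,T];H^2)$, where $\int_{\R^n}(\Delta u\,\overline v-u\,\overline{\Delta v})=0$ follows from the standard $H^2$ Green identity rather than from rapid decay. Option (ii) is the shorter repair and fits your scheme; option (i) is what the paper does.

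Finally, your remark about extra care at the endpoint $(a,b)=(\infty,n/2)$ is unnecessary here: the proposition and its proof are stated purely in terms of $X$ and $X^\star$ with $q\in[1,2]$ always finite, and the argument does not branch on the endpoint.
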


The proof of the \cref{prop:orthogonality} is straightforward after one has the right 
integra-tion-by-parts formula. In fact, the proof follows exactly the same lines as the
\cite[Proposition 4.1]{10.1063/5.0152372}. Since it is rather short, we include it here for completeness.

\begin{proof}[Proof of the \cref{prop:orthogonality}]
Beside the solutions $u_1$ and $v_2$, consider a third solution $u_2 \in X^\star$ of the initial-value problem \eqref{pb:IVP} with potential $V_2$ and initial state $f$.

Note that for $j \in \{ 1, 2 \}$ we have that
\[ \int_\Sigma (V_j - V_2)u_j \overline{v_2} \, = \int_\Sigma \big[ (i\partial_\tm + \Delta) u_j \overline{v_2} - u_j  \overline{(i\partial_\tm + \Delta) v_2} \big] \,. \]

Since $u_j, v_2 \in X^\star $ and $V_1, \overline{V_2} \in Y$, we have that
$(i\partial_\tm + \Delta) u_j $ and $(i\partial_\tm + \Delta) v_2$ belong to $X$. Then,
the \cref{lem:integration_by_parts} yields
\[ \int_\Sigma (V_j - V_2)u_j \overline{v_2} \, = i \int_{\R^n} \big[ 
u_j(T, \centerdot) \overline{v_2 (T,\centerdot)} - u_j(0, \centerdot) \overline{v_2 (0,\centerdot)} \big] \,. \]

From the previous identity we obtain, for $j = 1$, that
\begin{equation}
\int_\Sigma (V_1 - V_2)u_1 \overline{v_2} \, = i \int_{\R^n} \big[ 
u_1(T, \centerdot) \overline{v_2 (T,\centerdot)} - u_1(0, \centerdot) \overline{v_2 (0,\centerdot)} \big] \, ,
\label{id:integration_by_parts}
\end{equation}
while for $j = 2$ we have that
\begin{equation}
i \int_{\R^n} \big[ 
u_2(T, \centerdot) \overline{v_2 (T,\centerdot)} - u_2(0, \centerdot) \overline{v_2 (0,\centerdot)} \big] = 0.
\label{id:trivial_orthogonality}
\end{equation}

Subtracting the left-hand side of \eqref{id:trivial_orthogonality} to the right-hand side of \eqref{id:integration_by_parts}, we get
\[\int_\Sigma (V_1 - V_2)u_1 \overline{v_2} \, = i \int_{\R^n} \big[ 
u_1(T, \centerdot) - u_2 (T, \centerdot)\big] \overline{v_2 (T,\centerdot)} \]
since $u_1(0, \centerdot) = u_2(0, \centerdot) = f$.

Additionally, since 
$ u_j (T, \centerdot) = \mathcal{U}_T^j f$ and $v_2(0, \centerdot) = g$, we obtain the 
identity \eqref{id:integral_identity}.
\end{proof}

The \cref{lem:integration_by_parts} is proved using the same ideas we used in the proof
of \cite[Proposition 4.2]{10.1063/5.0152372}. For convenience for the reader, we have include
\cref{app:integration_by_parts} a sketch of the proof of \cref{lem:integration_by_parts}.
%

\subsection{Dual form of the orthogonality relation}
When $\mathcal{U}_T^1 = \mathcal{U}_T^2$ the \cref{prop:orthogonality} yields an orthogonality relation
for physical solutions. Here we derive a pair of results that provide a dual description of this 
orthogonality relation. 

It is convenient to recall the
well-posedness of the following problem.
\begin{equation}
\label{pb:non-homogenous}
	\left\{
		\begin{aligned}
		& (i\partial_\tm + \Delta - V) u = F & & \textnormal{in} \, \Sigma, \\
		& u(0, \centerdot) = 0 &  & \textnormal{in} \, \R^n.
		\end{aligned}
	\right.
\end{equation}

Consider $V \in Y$. Then, for every $ F \in X $, there exists a unique $ u \in X^\star $
solving the problem \eqref{pb:non-homogenous}.
Moreover, the linear map 
$ F \in X \mapsto u \in X^\star $ is bounded.
This fact has been proved for example in 
\cite{zbMATH02222227,zbMATH05013664}\footnote{This solution
is again obtained by solving an integral equation that is a consequence of Duhamel's 
principle. One can check that this solution $u$ satisfies the equation 
$(i\partial_\tm  + \Delta - V) u = F$ in $\Sigma$ in the sense of 
distributions.}.

\begin{lemma} \label{lem:orthogonal_rewardPB} \sl Given $V \in Y$, let $u \in X^\star$
be the solution of the problem \eqref{pb:non-homogenous} with $F \in X$.
If $F$ satisfies that
\[ \int_\Sigma F \overline{v} \, = 0 \]
for every $v \in X^\star $ solving the final-value problem
\[
\left\{
		\begin{aligned}
		& i\partial_\tm v = - \Delta v + \overline{V} v & & \textnormal{in} \, \Sigma, \\
		& v(T, \centerdot) = g &  & \textnormal{in} \, \R^n,
		\end{aligned}
	\right.
\]
with final value $g \in L^2 (\R^n)$, then
\[ u(T, \centerdot) = 0. \]
\end{lemma}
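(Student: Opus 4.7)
The plan is to show that $u(T,\centerdot)=0$ in $L^2(\R^n)$ by testing against arbitrary $g\in L^2(\R^n)$ and using the integration by parts formula from \cref{lem:integration_by_parts} to convert the inner product $\int_{\R^n} u(T,\centerdot)\overline{g}\,$ into the pairing $\int_\Sigma F\overline{v}\,$, which vanishes by hypothesis.

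More precisely, fix any $g\in L^2(\R^n)$ and let $v\in X^\star$ be the unique physical solution of the final-value problem with potential $\overline{V}$ and final datum $g$ (whose existence is recalled at the beginning of the section). The first step is to verify that the hypotheses of \cref{lem:integration_by_parts} are satisfied for the pair $(u,v)$. Both $u,v$ belong to $X^\star$ by construction. Writing $V=V^\sharp+V^\flat$ with $V^\sharp\in L^1((0,T);L^\infty(\R^n))$ and $V^\flat\in L^a((0,T);L^b(\R^n))$ (or $C([0,T];L^{n/2}(\R^n))$ at the endpoint), H\"older's inequality together with the relation \eqref{id:indices4Vab} between $(a,b)$, $(q,r)$ and $(q^\prime,r^\prime)$ gives $V^\sharp u\in L^1((0,T);L^2(\R^n))$ and $V^\flat u\in L^q((0,T);L^r(\R^n))$, so that $Vu\in X$. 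Combined with $F\in X$, this yields $(i\partial_\tm+\Delta)u=F+Vu\in X$. The same argument, applied to $\overline{V}$ and $v$, shows $(i\partial_\tm+\Delta)v=\overline{V}v\in X$.

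Now invoking \cref{lem:integration_by_parts} and using $u(0,\centerdot)=0$ and $v(T,\centerdot)=g$, we obtain
\[
\int_\Sigma\bigl[(F+Vu)\overline{v}-u\,\overline{\overline{V}v}\bigr]\, \;=\; i\int_{\R^n} u(T,\centerdot)\,\overline{g}\,.
\]
The terms involving $V$ cancel because $\overline{\overline{V}v}=V\overline{v}$, leaving the identity
\[
\int_\Sigma F\overline{v}\, \;=\; i\int_{\R^n} u(T,\centerdot)\,\overline{g}\,.
\]
By the orthogonality hypothesis the left-hand side vanishes, hence $\int_{\R^n} u(T,\centerdot)\overline{g}\,=0$ for every $g\in L^2(\R^n)$, which gives $u(T,\centerdot)=0$.

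I do not expect any essential obstacle: the proof is a direct duality computation once the integration by parts formula is in place. The only point requiring mild care is the verification that $Vu,\overline{V}v\in X$ so that \cref{lem:integration_by_parts} is applicable, but this is a straightforward consequence of H\"older's inequality and the choice of the exponents relating the spaces $X$, $X^\star$ and $Y$.
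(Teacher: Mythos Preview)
Your proof is correct and follows essentially the same approach as the paper: fix $g\in L^2(\R^n)$, apply the integration-by-parts formula of \cref{lem:integration_by_parts} to the pair $(u,v)$ after checking that $(i\partial_\tm+\Delta)u$ and $(i\partial_\tm+\Delta)v$ lie in $X$, and use the cancellation of the potential terms to reduce to $\int_\Sigma F\overline v$. Your verification that $Vu,\overline V v\in X$ is slightly more explicit than the paper's, but the argument is otherwise identical.
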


The proof of the \cref{lem:orthogonal_rewardPB} follows exactly the same lines as the
\cite[Lemma 4.4]{10.1063/5.0152372}, but we include it here for completeness, and
since it is rather short.

\begin{proof}[Proof of the \cref{lem:orthogonal_rewardPB}]
Consider $g \in L^2(\R^n)$, and let $v \in X^\star$ denote the solution
of the corresponding final-value problem. Then, by the 
\cref{lem:integration_by_parts}, and the fact that $u(0, \centerdot) = 0$, we have
\[ i \int_{\R^n} u(T, \centerdot) \overline{g} \, = \int_\Sigma \big[ (i\partial_\tm + \Delta) u \overline{v} - u \overline{(i\partial_\tm + \Delta)v} \big] \, .  \]

Note that the \cref{lem:integration_by_parts} can be applied because 
$ F \in X $, $V \in Y$, and $u$ and $v$ belong to $X^\star$.

Adding and subtracting $V u \overline{v}$ we have, by 
\eqref{pb:non-homogenous}, that
\[ i \int_{\R^n} u(T, \centerdot) \overline{g} \, = \int_\Sigma F \overline{v} \, - \int_\Sigma u \overline{(i\partial_\tm + \Delta - \overline{V})v} \, .  \]

The first term on the right-hand side is assumed to vanish, 
while the second term vanishes because $v$ is
the solution of the final-value problem.

Hence,
\[\int_{\R^n} u(T, \centerdot) \overline{g} \, = 0\]
Since $g$ is arbitrary, we can conclude that $u(T, \centerdot) = 0$.
\end{proof}

We will need a symmetric version of this lemma that we state without proof.
\begin{lemma}\label{lem:orthogonal_forwardPB} \sl Given $V \in Y$, let $ v\in X^\star $ be the
solution of the problem 
\begin{equation}
\label{pb:zero-final_non-homo}
\left\{
		\begin{aligned}
		& (i\partial_\tm + \Delta - \overline{V}) v = G & & \textnormal{in} \, \Sigma, \\
		& v(T, \centerdot) = 0 &  & \textnormal{in} \, \R^n,
		\end{aligned}
	\right.
\end{equation}
with $G \in X$. If $G$ satisfies that
\[ \int_\Sigma \overline{G} u \, = 0 \]
for every $u \in X^\star $ solving the initial-value problem 
\eqref{pb:IVP} with $f \in L^2 (\R^n)$, then
\[ v(0, \centerdot) = 0. \]
\end{lemma}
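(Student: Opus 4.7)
The plan is to mirror the proof of the \cref{lem:orthogonal_rewardPB}, exchanging the roles of initial and final data and exploiting the symmetry of the integration-by-parts formula in the \cref{lem:integration_by_parts}.

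First, I would pick an arbitrary $f \in L^2(\R^n)$ and let $u \in X^\star$ denote the unique physical solution of the initial-value problem \eqref{pb:IVP} with potential $V$ and initial data $f$. Since $V \in Y$, $G \in X$ and both $u, v \in X^\star$, we have that $(i\partial_\tm + \Delta) u = V u \in X$ and $(i\partial_\tm + \Delta) v = G + \overline{V} v \in X$, so the \cref{lem:integration_by_parts} applies. Using $u(0, \centerdot) = f$ and $v(T, \centerdot) = 0$, it yields
\[ \int_\Sigma \bigl[ (i\partial_\tm + \Delta) u \, \overline{v} - u \, \overline{(i\partial_\tm + \Delta) v} \bigr] \, = - i \int_{\R^n} f \, \overline{v(0,\centerdot)} \,. \]

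Next, I would rewrite the left-hand side using the two equations satisfied by $u$ and $v$. Adding and subtracting $V u \overline{v}$ and using $(i\partial_\tm + \Delta - V) u = 0$ together with $(i\partial_\tm + \Delta - \overline{V}) v = G$, the left-hand side collapses to $- \int_\Sigma u \, \overline{G} \,$. Hence
\[ \int_\Sigma \overline{G} \, u \, = i \int_{\R^n} f \, \overline{v(0,\centerdot)} \,. \]

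Finally, by hypothesis the left-hand side vanishes for every physical solution $u$ of the initial-value problem \eqref{pb:IVP}, and such solutions exist for every initial datum $f \in L^2(\R^n)$. Therefore $\int_{\R^n} f \, \overline{v(0,\centerdot)} \, = 0$ for all $f \in L^2(\R^n)$, which forces $v(0, \centerdot) = 0$. No new obstacle appears with respect to the proof of the \cref{lem:orthogonal_rewardPB}; the only points that deserve care are checking that the regularity of $u$, $v$, $G$ and $V$ places $(i\partial_\tm + \Delta) u$ and $(i\partial_\tm + \Delta) v$ in $X$ so that the integration by parts is legitimate, and keeping track of the signs coming from the boundary terms at $t = 0$ and $t = T$.
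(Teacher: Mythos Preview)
Your proof is correct and is exactly the symmetric argument the paper has in mind: the authors state this lemma without proof, saying only that it is ``a symmetric version'' of \cref{lem:orthogonal_rewardPB}, and your mirroring of that proof---applying \cref{lem:integration_by_parts} with $v(T,\centerdot)=0$ and $u(0,\centerdot)=f$, then using the equations for $u$ and $v$ to reduce the left-hand side to $-\int_\Sigma u\,\overline{G}$---is precisely what is intended.
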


\subsection{An integration by parts for exponential growing functions}

The dual counterparts of the orthogonality relation for physical solutions can be used to extend
the integration-by-parts formula of the \cref{lem:integration_by_parts} for
a suitable $u$ with vanishing initial and
final states and $v$ having an exponential growth.
To do so, we need to first show that such $u$ has an 
exponential decay whenever $(i\partial_\tm + \Delta) u$ has it too.

\begin{lemma}\label{lem:exp_decay} \sl Consider $F \in L^q((0,T); L^r(\R^n))$,
with $(q, r) \in [1, 2] \times [1, 2]$ satisfying
\eqref{id:Strichartz_indeces}.
Let $u \in X^\star$ satisfy the conditions
\[
\left\{
		\begin{aligned}
		& (i\partial_\tm + \Delta) u = F & & \textnormal{in} \enspace \Sigma, \\
		& u(0, \centerdot) = u(T, \centerdot) = 0 &  & \textnormal{in} \enspace \R^n.
		\end{aligned}
	\right.
\]

If we additionally assume that $e^{c|\x|} F \in L^q((0,T); L^r(\R^n))$ for some $c>0$,
then, there exists a constant $C > 0$ that only depends on $n$, $q$ and $r$ such that
\[ \| e^{\nu \cdot \x} u \|_{L^{q^\prime} ((0, T); L^{r\prime}(\R^n))} \leq C \| e^{\nu \cdot \x} F \|_{L^q ((0, T); L^r(\R^n))} \]
for all $\nu \in \R^n \setminus \{ 0 \}$ with $|\nu| < c$.
\end{lemma}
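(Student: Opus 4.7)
The plan is to reduce the claim to the Strichartz-type bound for $S_{-\nu}$ of \cref{th:non-gain_Snu} via conjugation by a complex exponential phase. Setting $\psi(t, x) = \nu \cdot \x - i|\nu|^2 t$, one has $|e^\psi| = e^{\nu \cdot \x}$, and a direct computation yields
\[ e^{-\psi}(i\partial_\tm + \Delta)(e^\psi \phi) = (i\partial_\tm + \Delta - 2\nu \cdot \nabla)\phi. \]
Hence $\tilde u = e^\psi u$ and $\tilde F = e^\psi F$ satisfy
\[ (i\partial_\tm + \Delta - 2\nu \cdot \nabla)\tilde u = \tilde F \enspace \textnormal{in} \enspace (0, T) \times \R^n, \]
with $\tilde u(0, \centerdot) = \tilde u(T, \centerdot) = 0$. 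The assumption $|\nu| < c$ together with $e^{c|\x|}F \in L^q((0,T); L^r(\R^n))$ ensures $\tilde F \in L^q((0,T); L^r(\R^n))$, and $\|\tilde F\|_{L^q((0,T); L^r(\R^n))} = \|e^{\nu \cdot \x} F\|_{L^q((0,T); L^r(\R^n))}$.

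Next, I would extend $\tilde u$ and $\tilde F$ by zero to $\R \times \R^n$. The continuity of $\tilde u$ in $t$ together with the vanishing at $t = 0, T$ prevents the appearance of boundary delta contributions, so the extended equation $(i\partial_\tm + \Delta - 2\nu \cdot \nabla)\tilde u^{\rm ext} = \tilde F^{\rm ext}$ holds distributionally on $\R \times \R^n$. Taking the spatial Fourier transform reduces this to a first-order linear ODE in $t$ for each $\xi$; the causal condition $\tilde u^{\rm ext}(t, \centerdot) = 0$ for $t < 0$ uniquely determines its solution via Duhamel's formula, and a direct computation (also taking the temporal Fourier transform) identifies it with the Fourier multiplier representation of $S_{-\nu} \tilde F^{\rm ext}$. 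Therefore $\tilde u^{\rm ext} = S_{-\nu} \tilde F^{\rm ext}$.

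Applying \cref{th:non-gain_Snu} with $\nu$ replaced by $-\nu$ (the constant being independent of $\nu$) yields
\[ \|e^{\nu \cdot \x} u\|_{L^{q^\prime}((0, T); L^{r^\prime}(\R^n))} = \|\tilde u^{\rm ext}\|_{L^{q^\prime}(\R; L^{r^\prime}(\R^n))} \leq C \|\tilde F^{\rm ext}\|_{L^q(\R; L^r(\R^n))} = C \|e^{\nu \cdot \x} F\|_{L^q((0, T); L^r(\R^n))}, \]
which is the desired estimate. The main obstacle is the rigorous identification $\tilde u^{\rm ext} = S_{-\nu} \tilde F^{\rm ext}$, because \emph{a priori} the kernel of $i\partial_\tm + \Delta - 2\nu \cdot \nabla$ on $\mathcal{S}^\prime(\R \times \R^n)$ is nontrivial (its elements have Fourier support on the measure-zero variety $\Gamma_{-\nu}$). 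The combination of compact $t$-support of $\tilde u^{\rm ext}$ with the spatial-Fourier ODE representation, together with a density step approximating $\tilde F$ by Schwartz functions and using the continuity of $S_{-\nu}$ on the relevant mixed-norm Lebesgue spaces, rules out any homogeneous contribution and completes the argument.
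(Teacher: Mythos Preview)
Your proposal has a genuine circularity at the step ``taking the spatial Fourier transform reduces this to a first-order linear ODE''. The function $\tilde u^{\rm ext} = e^\psi u^{\rm ext}$ has $|e^\psi| = e^{\nu\cdot\x}$, and since all you know about $u$ is that it lies in $X^\star \subset C([0,T];L^2)$, the weighted function $e^{\nu\cdot\x}u(t,\centerdot)$ is \emph{a priori} not tempered in $x$; its spatial Fourier transform is not defined, and the ODE argument cannot start. This is precisely the conclusion you are trying to establish. The density step you propose---approximating $\tilde F$ by Schwartz functions and passing to the limit in $S_{-\nu}$---does not close the gap: convergence of $S_{-\nu}\tilde F_k$ in $L^{q'}L^{r'}$ tells you nothing about $\tilde u^{\rm ext}$ until you already know $\tilde u^{\rm ext}=S_{-\nu}\tilde F^{\rm ext}$.

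There is also a subtler issue with ``a direct computation identifies it with $S_{-\nu}$''. The multiplier $S_{-\nu}$ is \emph{not} the causal Duhamel operator: from the kernel formula in the proof of \cref{prop:scale_invariant} one sees that $S_{-\nu}$ propagates forward in $t$ on the half-space $\{\nu\cdot\xi>0\}$ and backward on $\{\nu\cdot\xi<0\}$. The causal Duhamel solution and $S_{-\nu}\tilde F^{\rm ext}$ coincide only when the source satisfies the hidden constraint $\int_0^T e^{(i|\xi|^2+2\nu\cdot\xi)s}\widehat{\tilde F}(s,\xi)\,\dd s=0$, which is equivalent (after undoing the conjugation) to $\widehat G(-|\xi|^2,\xi)=0$. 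This is where \emph{both} conditions $u(0,\centerdot)=u(T,\centerdot)=0$ enter essentially, not merely to kill boundary deltas. The paper's proof (in \cref{app:transference}) never Fourier-transforms $u$: it starts from the well-defined object $T_\nu(e^{\nu\cdot\x}G)\in L^{q'}L^{r'}$ and shows it equals $e^{\nu\cdot\x}u$ by pairing with $\phi\in\mathcal D$ and shifting the contour $\xi_n\mapsto\xi_n+i|\nu|$, which is justified because $\widehat G(\tau,\xi',\zeta)/(-\tau-|\xi'|^2-\zeta^2)$ is holomorphic in the strip---the content of \cref{cor:vanish}, which in turn encodes exactly the vanishing $\widehat G(-|\xi|^2,\xi)=0$ coming from the two endpoint conditions.
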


This lemma is pretty similar to \cite[Lemma 4.6]{10.1063/5.0152372}. Actually, it is an adaptation
of this to our case in the light of the \cref{th:non-gain_Snu}. For this reason, we omit the proof at
this point and include it in the \cref{app:transference}.

\begin{remark}\label{rem:exp_decay}
For $F$ and $u$ in the conditions of the \cref{lem:exp_decay} we have that
$e^{c^\prime |\x|} u \in L^{q^\prime} ((0, T); L^{r\prime}(\R^n))$ for all $c^\prime < c$.
\end{remark}

A useful consequence of the \cref{lem:exp_decay,rem:exp_decay} is the following 
integration-by-parts formula that admits exponentially-growing functions.

\begin{proposition}\label{prop:CGO-PhS_int-by-parts}\sl 
Consider $F \in L^q((0,T); L^r(\R^n))$,
with $(q, r) \in [1, 2] \times [1, 2]$ satisfying
\eqref{id:Strichartz_indeces}.
Let $u \in X^\star$ satisfy the conditions
\[
\left\{
		\begin{aligned}
		& (i\partial_\tm + \Delta) u = F & & \textnormal{in} \enspace \Sigma, \\
		& u(0, \centerdot) = u(T, \centerdot) = 0 &  & \textnormal{in} \enspace \R^n.
		\end{aligned}
	\right.
\]

For every $\nu \in \R^n \setminus \{ 0 \}$,
consider a locally integrable measurable function 
$v^\nu : \R \times \R^n \rightarrow \C$ such that 
$e^{-\nu \cdot \x} (i\partial_\tm + \Delta) v^\nu \in L^q(\R; L^r(\R^n))$,
and for which there is $w^\nu \in L^{q^\prime}(\R; L^{r^\prime}(\R^n))$ so that
$\nu \cdot \nabla (e^{-\nu \cdot \x} v^\nu - w^\nu) = 0$, and
\[ \| e^{-\nu \cdot \x} v^\nu - w^\nu \|_{L^{q^\prime}(\R; L^{r^\prime}(H_{\hat{\nu}}))} < \infty. \]

If we additionally assume that $e^{c|\x|} F \in L^q((0,T); L^r(\R^n))$ for some $c>0$, then,
\[\int_\Sigma (i\partial_\tm + \Delta) u \overline{v^\nu} = \int_\Sigma u \overline{(i\partial_\tm + \Delta)v^\nu} \, \]
for all $\nu \in \R^n \setminus \{ 0 \}$ with $|\nu| < c$.
\end{proposition}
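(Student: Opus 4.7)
The plan is to reduce the asserted identity to a distributional pairing and pass to the limit, exploiting the exponential decay of $u$ inherited from $F$ via \cref{lem:exp_decay} together with the controlled exponential growth of $v^\nu$.

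First I verify absolute convergence of both integrals. By \cref{lem:exp_decay} and \cref{rem:exp_decay}, $e^{\nu\cdot\x} u$ lies in $L^{q^\prime}((0,T); L^{r^\prime}(\R^n))$, so pairing against $e^{-\nu\cdot\x}(i\partial_\tm + \Delta)v^\nu \in L^q(\R; L^r(\R^n))$ controls the integrand on the right of the identity. Writing $v^\nu = e^{\nu\cdot\x}(w^\nu + h^\nu)$ with $h^\nu = e^{-\nu\cdot\x}v^\nu - w^\nu$ depending only on $t$ and the $H_{\hat{\nu}}$-coordinate, the pointwise bound $e^{\nu\cdot\x}\leq e^{c|\x|}$, valid since $|\nu|<c$, places $F e^{\nu\cdot\x}$ in $L^q((0,T); L^r(\R^n))$; Hölder's inequality against $w^\nu\in L^{q^\prime}(\R; L^{r^\prime}(\R^n))$, and a Fubini slicing along the hyperplanes $H_{\hat{\nu}, s}$ against $h^\nu\in L^{q^\prime}(\R; L^{r^\prime}(H_{\hat{\nu}}))$, give absolute convergence of the integrand on the left as well.

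Next, extend $u$ by zero in time to $\tilde u$ defined on $\R\times \R^n$; since $u(0,\centerdot)=u(T,\centerdot)=0$, the extension belongs to $C(\R; L^2(\R^n))\cap L^{q^\prime}(\R; L^{r^\prime}(\R^n))$ and satisfies $(i\partial_\tm + \Delta)\tilde u = \tilde F$ in $\mathcal{D}^\prime(\R\times\R^n)$, where $\tilde F$ is $F$ extended by zero outside $\Sigma$. In particular, $\int \tilde F\, \overline\phi = \int \tilde u\, \overline{(i\partial_\tm + \Delta)\phi}$ for every $\phi\in C_c^\infty(\R\times\R^n)$. Let $\rho_\epsilon$ be a spacetime mollifier, $\chi\in C_c^\infty(\R^{n+1})$ equal to $1$ on a neighborhood of the origin, and $\chi_R(t,x) = \chi(t/R, x/R)$. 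Applying this identity to $\phi_{R,\epsilon}=\chi_R(\rho_\epsilon\ast v^\nu)$ and letting $\epsilon\to 0$, by the local convergence of the mollification and the exponential envelopes from the previous paragraph, yields $\int \tilde F\, \chi_R\overline{v^\nu} = \int \tilde u\, \overline{(i\partial_\tm + \Delta)(\chi_R v^\nu)}$.

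Finally I let $R\to\infty$. The main terms converge to those in the statement by dominated convergence. The main obstacle is the commutator $[(i\partial_\tm + \Delta), \chi_R]v^\nu$, which contains the uncontrolled quantity $\nabla v^\nu$. I would remove this obstruction by one further integration by parts against $\tilde u$, converting $2\int \tilde u\, \nabla\chi_R\cdot\overline{\nabla v^\nu}$ into $-2\int \nabla\cdot(\tilde u\, \nabla\chi_R)\, \overline{v^\nu}$; this reduces all commutator contributions to pairings between $v^\nu$, bounded pointwise by $e^{|\nu||\x|}(|w^\nu|+|h^\nu|)$, and quantities supported in the annular shell where $|\nabla\chi_R|,|\partial_\tm\chi_R|\lesssim R^{-1}$ and $|\Delta\chi_R|\lesssim R^{-2}$. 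Because $e^{c^\prime|\x|}u$ lies in $L^{q^\prime}(\R; L^{r^\prime}(\R^n))$ for any $c^\prime<c$ with $|\nu|<c^\prime$, these shell contributions vanish as $R\to\infty$. This is a mixed-norm adaptation of the analogous passage in \cite{10.1063/5.0152372}; the details are deferred to \cref{app:integration_by_parts}.
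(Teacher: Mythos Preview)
Your argument has a genuine gap at the step where you send $\epsilon\to 0$ for fixed $R$. Expanding $(i\partial_\tm + \Delta)\bigl(\chi_R(\rho_\epsilon\ast v^\nu)\bigr)$ produces the commutator term $2\nabla\chi_R\cdot\nabla(\rho_\epsilon\ast v^\nu)$, and nothing in the hypotheses controls $\nabla v^\nu$ even locally: the quantity $\int \tilde u\,\nabla\chi_R\cdot\overline{\nabla(\rho_\epsilon\ast v^\nu)}$ need not converge as $\epsilon\to 0$, since $\|\nabla\rho_\epsilon\|_{L^1}\sim 1/\epsilon$ and there is nothing to absorb this at fixed $R$. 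Your proposed remedy---one further integration by parts to reach $-2\int\nabla\cdot(\tilde u\,\nabla\chi_R)\,\overline{v^\nu}$---merely shifts the derivative onto $\tilde u$; but $\nabla\tilde u$ is equally uncontrolled, as \cref{lem:exp_decay} and \cref{rem:exp_decay} give weighted $L^{q^\prime}L^{r^\prime}$ bounds on $u$ itself, not on its gradient. Thus neither side of that integration by parts is known to be finite, and the commutator cannot be disposed of this way.

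The paper's proof avoids derivatives of either $u$ or $v^\nu$ altogether. It approximates $v^\nu$ by
\[
v^\nu_\varepsilon = e^{\nu\cdot\x}\,\chi^{R(\varepsilon)}\,\varphi_\varepsilon\ast\bigl(e^{-\nu\cdot\x}v^\nu\bigr),
\]
mollifying the \emph{conjugated} function $e^{-\nu\cdot\x}v^\nu = w^\nu + (e^{-\nu\cdot\x}v^\nu - w^\nu)$, which enjoys global $L^{q^\prime}L^{r^\prime}$-type bounds. The decisive point is that the cut-off radius is \emph{coupled} to the mollification scale: one takes $R=R(\varepsilon)$ with $\varepsilon R(\varepsilon)\to\infty$. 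The gradient commutator then carries a factor $1/R(\varepsilon)$ from $\nabla\chi^{R(\varepsilon)}$ and a factor $1/\varepsilon$ from $\|\nabla\varphi_\varepsilon\|_{L^1}$, while all remaining factors are bounded uniformly in $\varepsilon$ thanks to $e^{c^\prime|\x|}u\in L^{q^\prime}((0,T);L^{r^\prime}(\R^n))$ for $|\nu|<c^\prime<c$ (\cref{rem:exp_decay}). The product $1/(\varepsilon R(\varepsilon))$ drives the whole term to zero. Sending $\epsilon\to 0$ and $R\to\infty$ \emph{independently}, as you do, loses exactly this mechanism.
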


\begin{remark} When reading this statement, it could be helpful for 
the reader to keep in mind a CGO solution 
$u = e^{i|\nu|\tm + \nu \cdot \x} (u^\sharp + u^\flat)$
as the one in the \cref{th:CGO}. In the setting of 
the \cref{prop:CGO-PhS_int-by-parts}, $v^\nu$ and $w^\nu$ are playing the roles of 
$\chi u$ and $\chi e^{i|\nu|\tm} u^\flat$ respectively,
with $\chi $ a bump function 
of the time variable taking value $1$ in the interval $(0,T)$.
\end{remark}

\begin{remark} \label{rem:integration_by_parts}
Note that $(i\partial_\tm + \Delta) u \overline{v^\nu}$ and 
$ u \overline{(i\partial_\tm + \Delta)v^\nu}$ are in $L^1(\Sigma)$, and therefore,
each side of the integration-by-parts formula of the \cref{prop:CGO-PhS_int-by-parts} makes
sense.

\begin{enumerate}[label=\textnormal{(\alph*)}, ref=\alph*]
\item In order to see that $ u \overline{(i\partial_\tm + \Delta)v^\nu} \in L^1(\Sigma)$,
we note that by the \cref{lem:exp_decay} we have
$e^{\nu \cdot \x} u \in L^{q^\prime} ((0, T); L^{r\prime}(\R^n))$,
while $e^{-\nu \cdot \x} (i\partial_\tm + \Delta) v^\nu \in L^q(\R; L^r(\R^n))$ by assumption.

\item \label{item:integration_by_parts}
The fact $(i\partial_\tm + \Delta) u \overline{v^\nu} \in L^1(\Sigma)$, or equivalently 
$F \overline{v^\nu} \in L^1(\Sigma)$, can be derived as follows: If we write
\[F \overline{v^\nu} = e^{\nu \cdot \x} F [\overline{e^{-\nu \cdot \x} v^\nu - w^\nu}] + e^{\nu \cdot \x} F \overline{w^\nu}, \]
we can apply Hölder's inequality to obtain
\begin{align*}
\int_\Sigma \big| F \overline{v^\nu} \big| \leq \enspace & \int_0^T
\| e^{-\nu \cdot \x} v^\nu(t, \centerdot) - w^\nu(t, \centerdot) \|_{L^{r^\prime}(H_{\hat{\nu}})}
\Big( \int_\R \| e^{\nu \cdot \x} F(t, \centerdot) \|_{L^r(H_{\hat{\nu}, s})} \, \dd s \Big) \, \dd t \\
& + \| e^{\nu \cdot \x} F \|_{L^q((0,T); L^r(\R^n))} \| w^\nu \|_{L^{q^\prime}(\R; L^{r^\prime}(\R^n))}.
\end{align*}
Here we have used that $\nu \cdot \nabla (e^{-\nu \cdot \x} v^\nu - w^\nu) = 0$.

Since
\[ \int_\R \| e^{\nu \cdot \x} F(t, \centerdot) \|_{L^r(H_{\hat{\nu}, s})} \, \dd s
\leq \Big( \int_\R e^{-r^\prime(c|s| -|\nu|s)} \, \dd s \Big)^{1/r^\prime} \| e^{c |\hat{\nu} \cdot \x|} F(t, \centerdot) \|_{L^r(\R^n)}, \]
we have that
\begin{align*}
\int_\Sigma \big| F \overline{v^\nu} \big| \lesssim \enspace & \int_0^T
\| e^{-\nu \cdot \x} v^\nu(t, \centerdot) - w^\nu(t, \centerdot) \|_{L^{r^\prime}(H_{\hat{\nu}})}
\| e^{c |\x|} F(t, \centerdot) \|_{L^r(\R^n)} \, \dd t \\
& + \| e^{c |\x|} F \|_{L^q((0,T); L^r(\R^n))} \| w^\nu \|_{L^{q^\prime}(\R; L^{r^\prime}(\R^n))}.
\end{align*}

Therefore, by Hölder's inequality we conclude that
\begin{equation}
\label{in:pre_approx}
\begin{aligned}
\int_\Sigma \big| F \overline{v^\nu} \big| \lesssim \| e^{c |\x|} F & \|_{L^q((0,T); L^r(\R^n))} \\
& \big( \| e^{-\nu \cdot \x} v^\nu - w^\nu \|_{L^{q^\prime}(\R; L^{r^\prime}(H_{\hat{\nu}}))} + \| w^\nu \|_{L^{q^\prime}(\R; L^{r^\prime}(\R^n))} \big),
\end{aligned}
\end{equation}
and consequently $F \overline{v^\nu} \in L^1(\Sigma)$.
\end{enumerate}
\end{remark}

\begin{proof}
The main idea of the proof is to approximate 
$v^\nu$ by $v^\nu_\varepsilon$ so that we can integrate by parts
following the \cref{lem:integration_by_parts}. The argument goes as follows.

After proving
\begin{equation}
\label{lim:vnu}
\int_\Sigma (i\partial_\tm + \Delta) u \overline{v^\nu} = \lim_{\varepsilon \to 0} \int_\Sigma (i\partial_\tm + \Delta) u \overline{v^\nu_\varepsilon},
\end{equation}
we could apply the \cref{lem:integration_by_parts} to obtain
\[ \int_\Sigma (i\partial_\tm + \Delta) u \overline{v^\nu_\varepsilon} = \int_\Sigma u \overline{(i\partial_\tm + \Delta) v^\nu_\varepsilon}, \]
since $u(0, \centerdot) = u(T, \centerdot) = 0$.
Then, to conclude the identity of the statement, we would only need to check that
\begin{equation}
\label{lim:Sch_vnu}
\lim_{\varepsilon \to 0} \int_\Sigma u \overline{(i\partial_\tm + \Delta) v^\nu_\varepsilon} = \int_\Sigma u \overline{(i\partial_\tm + \Delta) v^\nu} .
\end{equation}

The rest of the proof consists of defining the $v^\nu_\varepsilon$ that will approximate $v^\nu$,
and showing that \eqref{lim:vnu} and \eqref{lim:Sch_vnu} hold.

Consider a smooth cut-off $\chi \in \mathcal{S}(\R^n)$ 
such that $ 0 \leq \chi(x) \leq 1 $ for 
all $x \in \R^n$, $\supp \chi \subset \{ x \in \R^n: |x| \leq 2 \}$ and
$ \chi(x) = 1 $ whenever $|x| \leq 1$. Consider $\varphi \in \mathcal{S}(\R \times \R^n)$
such that $\supp \varphi \subset [-1, 1] \times K$ with $K$ a compact subset of $\R^n$, 
$\int_{\R \times \R^n} \varphi = 1$ and $\varphi(t, x) \in [0, \infty)$
for all $(t, x) \in \R \times \R^n$. Whenever $R, \varepsilon > 0$, define 
$\chi^R (x) = \chi (x/R)$ for $x \in \R^n$,
$\varphi_\varepsilon (t, x) = \varepsilon^{-(n+1)} \varphi (t/\varepsilon, x/\varepsilon) $ 
for $(t, x) \in \R \times \R^n$. Define
\[ v^\nu_\varepsilon = e^{\nu \cdot \x} \chi^{R(\varepsilon)} \varphi_\varepsilon \ast (e^{-\nu \cdot \x} v^\nu), \]
where $\varepsilon \in (0, 1) \mapsto R(\varepsilon) \in (0, \infty)$ is continuous and 
$\lim_{\varepsilon \to 0} R(\varepsilon) = \infty$.

The identity \eqref{lim:vnu} is a consequence of 
\begin{equation}
\label{lim:Fvn-vneps}
\lim_{\varepsilon \to 0} \Big| \int_\Sigma F (\overline{v^\nu - v^\nu_\varepsilon}) \Big| = 0.
\end{equation}

In order to check that this limit holds, we re-write $F (\overline{v^\nu - v^\nu_\varepsilon})$
in a similar form as we wrote $F \overline{v^\nu}$ in the 
item \eqref{item:integration_by_parts} of
the \cref{rem:integration_by_parts} and we derive an inequality similar to
\eqref{in:pre_approx}. Indeed, from the identity
\[ e^{-\nu \cdot \x} (v^\nu - v^\nu_\varepsilon) = (e^{-\nu \cdot \x} v^\nu - w^\nu) - \chi^{R(\varepsilon)} \varphi_\varepsilon \ast (e^{-\nu \cdot \x} v^\nu - w^\nu) + w^\nu - \chi^{R(\varepsilon)} \varphi_\varepsilon \ast w^\nu, \]
we have that
\begin{align*}
F (\overline{v^\nu - v^\nu_\varepsilon}) &= e^{\nu \cdot \x} F [\overline{(e^{-\nu \cdot \x} v^\nu - w^\nu) - \chi^{R(\varepsilon)} \varphi_\varepsilon \ast (e^{-\nu \cdot \x} v^\nu - w^\nu)}] \\
&\quad + e^{\nu \cdot \x} F [\overline{w^\nu - \chi^{R(\varepsilon)} \varphi_\varepsilon \ast w^\nu}].
\end{align*}

Arguing as we did to obtain the inequality \eqref{in:pre_approx} we have
\begin{align*}
& \int_\Sigma \big| F (\overline{v^\nu - v^\nu_\varepsilon}) \big|  \lesssim \| e^{c |\x|} F \|_{L^q((0,T); L^r(\R^n))} \big( \| w^\nu - \chi^{R(\varepsilon)} \varphi_\varepsilon \ast w^\nu \|_{L^{q^\prime}(\R; L^{r^\prime}(\R^n))}\\
& \quad +
  \| (e^{-\nu \cdot \x} v^\nu - w^\nu) - \chi^{R(\varepsilon)} \varphi_\varepsilon \ast (e^{-\nu \cdot \x} v^\nu - w^\nu) \|_{L^{q^\prime}(\R; L^{r^\prime}(H_{\hat{\nu}}))} \big).
\end{align*}

After this inequality, \eqref{lim:Fvn-vneps} holds if
\begin{equation}
\label{lim:u^nu_eps}
\lim_{\varepsilon \to 0} \Big[ \| (e^{-\nu \cdot \x} v^\nu - w^\nu) - \chi^{R(\varepsilon)} \varphi_\varepsilon \ast (e^{-\nu \cdot \x} v^\nu - w^\nu) \|_{L^{q^\prime}(\R; L^{r^\prime}(H_{\hat{\nu}}))} \Big] = 0
\end{equation}
and
\begin{equation}
\label{lim:w^nu_eps}
\lim_{\varepsilon \to 0} \| w^\nu - \chi^{R(\varepsilon)} \varphi_\varepsilon \ast w^\nu \|_{L^{q^\prime}(\R; L^{r^\prime}(\R^n))} = 0.
\end{equation}
At this point, it is a standard procedure to check that \eqref{lim:u^nu_eps} and \eqref{lim:w^nu_eps} hold.

This ends the proof of \eqref{lim:vnu}. We now turn our attention to \eqref{lim:Sch_vnu}.

Applying Leibniz's rule we obtain
\begin{align*}
(i\partial_\tm + \Delta) v^\nu_\varepsilon &= e^{\nu \cdot \x} (i\partial_\tm + \Delta + 2 \nu \cdot \nabla + |\nu|^2) [\chi^{R(\varepsilon)} \varphi_\varepsilon \ast (e^{-\nu \cdot \x} v^\nu)] \\
&=  e^{\nu \cdot \x} \chi^{R(\varepsilon)} (i\partial_\tm + \Delta + 2 \nu \cdot \nabla + |\nu|^2) [\varphi_\varepsilon \ast (e^{-\nu \cdot \x} v^\nu)] \\
& \quad + e^{\nu \cdot \x}\frac{2}{R(\varepsilon)} \nabla \chi (\centerdot/R(\varepsilon)) \cdot \big( \nabla [\varphi_\varepsilon \ast (e^{-\nu \cdot \x} v^\nu)] + \varphi_\varepsilon \ast (e^{-\nu \cdot \x} v^\nu) \nu\big) \\
& \quad + e^{\nu \cdot \x}\frac{1}{R(\varepsilon)^2} \Delta \chi (\centerdot/R(\varepsilon)) \, \varphi_\varepsilon \ast (e^{-\nu \cdot \x} v^\nu).
\end{align*}

Observe that part of the first summand on the right-hand side of the second equality can be re-written as 
follows:
\begin{align*}
(i\partial_\tm + \Delta + 2 \nu \cdot \nabla + |\nu|^2) [\varphi_\varepsilon \ast (e^{-\nu \cdot \x} v^\nu)] & = \varphi_\varepsilon \ast [(i\partial_\tm + \Delta + 2 \nu \cdot \nabla + |\nu|^2)(e^{-\nu \cdot \x} v^\nu)] \\
& = \varphi_\varepsilon \ast [e^{-\nu \cdot \x} (i\partial_\tm + \Delta)v^\nu].
\end{align*}

Now, the whole first summand that we were referring to, can be written as
\begin{align*}
e^{\nu \cdot \x} \chi^{R(\varepsilon)} &(i\partial_\tm + \Delta + 2 \nu \cdot \nabla + |\nu|^2) [\varphi_\varepsilon \ast (e^{-\nu \cdot \x} v^\nu)] \\
&= (i\partial_\tm + \Delta) v^\nu + e^{\nu \cdot \x} \big( \chi^{R(\varepsilon)} \varphi_\varepsilon \ast [e^{-\nu \cdot \x} (i\partial_\tm + \Delta)v^\nu] - e^{-\nu \cdot \x} (i\partial_\tm + \Delta) v^\nu \big).
\end{align*}

Therefore,
\begin{align*}
(i\partial_\tm + \Delta) v^\nu_\varepsilon &= (i\partial_\tm + \Delta) v^\nu + e^{\nu \cdot \x} \big( \chi^{R(\varepsilon)} \varphi_\varepsilon \ast [e^{-\nu \cdot \x} (i\partial_\tm + \Delta)v^\nu] - e^{-\nu \cdot \x} (i\partial_\tm + \Delta) v^\nu \big) \\
& \quad + e^{\nu \cdot \x}\frac{2}{R(\varepsilon)} \nabla \chi (\centerdot/R(\varepsilon)) \cdot \big( \nabla [\varphi_\varepsilon \ast (e^{-\nu \cdot \x} v^\nu)] + \varphi_\varepsilon \ast (e^{-\nu \cdot \x} v^\nu) \nu\big) \\
& \quad + e^{\nu \cdot \x}\frac{1}{R(\varepsilon)^2} \Delta \chi (\centerdot/R(\varepsilon)) \, \varphi_\varepsilon \ast (e^{-\nu \cdot \x} v^\nu).
\end{align*}

In the view of last identity, \eqref{lim:Sch_vnu} holds if the following limits vanish:
\begin{gather}
\label{lim:exp_Sch_vnu}
\lim_{\varepsilon \to 0} \int_\Sigma e^{\nu \cdot \x} u \big( \chi^{R(\varepsilon)} \varphi_\varepsilon \ast [e^{-\nu \cdot \x} \overline{(i\partial_\tm + \Delta) v^\nu}] - e^{-\nu \cdot \x} \overline{(i\partial_\tm + \Delta) v^\nu} \big) \, ,\\
\label{lim:nabla_chi}
\lim_{\varepsilon \to 0} \frac{2}{R(\varepsilon)} \int_\Sigma e^{\nu \cdot \x} u \nabla \chi (\centerdot/R(\varepsilon)) \cdot \big( \nabla [\varphi_\varepsilon \ast (e^{-\nu \cdot \x} \overline{v^\nu})] + \varphi_\varepsilon \ast (e^{-\nu \cdot \x} \overline{v^\nu}) \nu\big) \, ,\\
\label{lim:Delta_chi}
\lim_{\varepsilon \to 0} \frac{1}{R(\varepsilon)^2} \int_\Sigma e^{\nu \cdot \x} u \Delta \chi (\centerdot/R(\varepsilon)) \, \varphi_\varepsilon \ast (e^{-\nu \cdot \x} \overline{v^\nu}) \, .
\end{gather}

The limit \eqref{lim:exp_Sch_vnu} vanishes because 
$e^{\nu \cdot \x} u $ belongs to $L^{q^\prime} ((0, T); L^{r\prime}(\R^n))$,
$e^{-\nu \cdot \x} (i\partial_\tm + \Delta) v^\nu \in L^q(\R; L^r(\R^n))$ by assumption, and
\[ \lim_{\varepsilon \to 0} \| \chi^{R(\varepsilon)} \varphi_\varepsilon \ast [e^{-\nu \cdot \x} \overline{(i\partial_\tm + \Delta) v^\nu}] - e^{-\nu \cdot \x} \overline{(i\partial_\tm + \Delta) v^\nu} \|_{L^q(\R; L^r(\R^n))} = 0. \]

Note that $e^{\nu \cdot \x} u \in L^{q^\prime} ((0, T); L^{r\prime}(\R^n))$ after
the \cref{lem:exp_decay}, while the previous limit follows from the same type of arguments 
that guarantees that \eqref{lim:u^nu_eps} and \eqref{lim:w^nu_eps} hold.

Next we will show that
\begin{equation}
\label{lim:worst}
\lim_{\varepsilon \to 0} \frac{2}{R(\varepsilon)} \int_\Sigma e^{\nu \cdot \x} u \nabla \chi (\centerdot/R(\varepsilon)) \cdot \nabla [\varphi_\varepsilon \ast (e^{-\nu \cdot \x} \overline{v^\nu})] \, = 0.
\end{equation}

Start by noticing that
\begin{align*}
& \Big| \int_\Sigma e^{\nu \cdot \x} u \nabla \chi (\centerdot/R(\varepsilon)) \cdot \nabla [\varphi_\varepsilon \ast (e^{-\nu \cdot \x} \overline{v^\nu})] \Big| \\
& \lesssim \frac{1}{\varepsilon} \| e^{c^\prime |\x|} u \|_{L^{q^\prime} ((0, T); L^{r\prime}(\R^n))} \Big[ \| e^{-c^\prime|\x| + \nu \cdot \x} \|_{L^b(\R^n)} \| w^\nu \|_{L^{q^\prime}(\R; L^{r^\prime}(\R^n))}\\
& + \| e^{-\nu \cdot \x} v^\nu - w^\nu \|_{L^{q^\prime}(\R; L^{r^\prime}(H_{\hat{\nu}}))}
\Big(\int_\R \| e^{-c^\prime|\x| + \nu \cdot \x} \|_{L^b(H_{\hat{\nu}, s})}^r \, \dd s \Big)^{1/r} \Big].
\end{align*}
Here we have used that 
$\| \nabla \chi (\centerdot/R(\varepsilon)) \|_{L^\infty(\R^n)} \lesssim 1$ and 
$ \| \nabla \varphi_\varepsilon \|_{L^1(\R \times \R^n)} \lesssim 1/\varepsilon$.

After the \cref{rem:exp_decay}, we have that 
$e^{c^\prime |\x|} u \in L^{q^\prime} ((0, T); L^{r\prime}(\R^n))$ whenever $c^\prime < c$.
Furthermore, if $|\nu| < c^\prime$ we have that
\[ \| e^{-c^\prime|\x| + \nu \cdot \x} \|_{L^b(\R^n)} + \Big(\int_\R \| e^{-c^\prime|\x| + \nu \cdot \x} \|_{L^b(H_{\hat{\nu}, s})}^r \, \dd s \Big)^{1/r} < \infty. \]

Then, for $c^\prime$ so that $|\nu| < c^\prime < c$ we have that
\begin{align*}
\Big| \frac{1}{R(\varepsilon)} \int_\Sigma e^{\nu \cdot \x} u \nabla \chi (\centerdot/R(\varepsilon)) & \cdot \nabla [\varphi_\varepsilon \ast (e^{-\nu \cdot \x} \overline{v^\nu})] \Big|
\lesssim \frac{1}{\varepsilon R(\varepsilon)} \| e^{c^\prime |\x|} u \|_{L^{q^\prime} ((0, T); L^{r\prime}(\R^n))} \\
& \Big[ \| w^\nu \|_{L^{q^\prime}(\R; L^{r^\prime}(\R^n))} + \| e^{-\nu \cdot \x} v^\nu - w^\nu \|_{L^{q^\prime}(\R; L^{r^\prime}(H_{\hat{\nu}}))} \Big].
\end{align*}
If the function $\varepsilon \in (0, 1) \mapsto R(\varepsilon) \in (0, \infty)$ is chosen so that
$\lim_{\varepsilon \to 0} \varepsilon R(\varepsilon) = \infty$, then \eqref{lim:worst}
holds.\footnote{Using the dominate convergence theorem it would have been enough to ensure that 
$\varepsilon R(\varepsilon) \lesssim 1$.}

With the same type of arguments one can show that \eqref{lim:Delta_chi} and
\[\lim_{\varepsilon \to 0} \frac{2}{R(\varepsilon)} \int_\Sigma e^{\nu \cdot \x} u \, \nu \cdot \nabla \chi (\centerdot/R(\varepsilon)) \, \varphi_\varepsilon \ast (e^{-\nu \cdot \x} \overline{v^\nu}) \]
vanish.

These facts allow us to conclude that \eqref{lim:Sch_vnu} holds.
\end{proof}

\subsection{An orthogonality relation for exponentially-growing solutions}
Whenever $\mathcal{U}_T^1 = \mathcal{U}_T^2$, the integral identity \eqref{id:integral_identity}
becomes an orthogonality relation between $V_1 - V_2$ and the product of the physical solutions $u_1 $ and
$\overline{v_2}$. Our goal here is to prove that the same orthogonality relation
holds for the product of exponentially-growing solutions. A key ingredient in our argument is 
the integration-by-parts formula of the \cref{prop:CGO-PhS_int-by-parts}.
For that reason, it is convenient to define precisely the concept of admissible 
exponentially-growing solutions.

\begin{definition}\label{def:exp-gro}\sl
Consider $V \in L^a((0, T); L^b(\R^n))$ for 
$(a, b) \in [1, \infty] \times [1, \infty]$ satisfying \eqref{cond:V_ab}
such that $e^{c|\x|} V \in L^a((0, T); L^b(\R^n))$.

Given $\nu \in \R^n \setminus \{ 0 \}$ with $|\nu| < c$, 
we say that $u: \R \times \R^n \rightarrow \C$ is a 
\emph{$\nu$-admissible exponentially-growing solution} if 
\begin{enumerate}[label=\textnormal{(\alph*)}, ref=\textnormal{\alph*}]
\item it is a measurable and locally 
integrable solution of 
\[ (i\partial_\tm + \Delta - V) u = 0 \enspace \textnormal{in} \enspace (0,T) \times \R^n, \]

\item there is $v \in L^{q^\prime}(\R; L^{r^\prime}(\R^n))$ with $q$
and $r$ satisfying \eqref{id:indices4Vab} 
such that $\nu \cdot \nabla (e^{-\nu \cdot \x} u - v) = 0$ and
\[ \| e^{-\nu \cdot \x} u - v \|_{L^{q^\prime}(\R; L^{r^\prime}(H_{\hat{\nu}}))} < \infty. \]
\end{enumerate}
\end{definition}

\begin{remark} Again, it could be helpful for 
the reader to keep in mind a CGO solution 
$w = e^{i|\nu|\tm + \nu \cdot \x} (w^\sharp + w^\flat)$
as the one in the \cref{th:CGO}. Then, for 
the \cref{def:exp-gro}, $u$ and $v$ are playing the roles of 
$\chi w$ and $\chi e^{i|\nu|\tm} w^\flat$ respectively,
with $\chi $ a bump function 
of the time variable taking value $1$ in the interval $(0,T)$.
\end{remark}

\begin{remark}\label{rem:Sch_vnu}
If $u$ is a $\nu$-admissible exponentially-growing solution, then
$e^{-\nu \cdot \x} (i\partial_\tm + \Delta) u \in L^q(\R; L^r(\R^n))$.

Indeed, we
only have to check that $e^{-\nu \cdot \x} V u $ belongs to $ L^q(\R; L^r(\R^n))$:
\[ \| e^{-\nu \cdot \x} V u \|_{L^q(\R; L^r(\R^n))} \leq \|  V (e^{-\nu \cdot \x} u - v) \|_{L^q(\R; L^r(\R^n))} + \| V v \|_{L^q(\R; L^r(\R^n))} \]

By Hölder's inequality we have that
\[ \| V v \|_{L^q(\R; L^r(\R^n))} \leq \| V \|_{L^a(\R; L^b(\R^n))} \| v \|_{L^{q^\prime}(\R; L^{r^\prime}(\R^n))} \]
and
\begin{equation*}
\|  V (e^{-\nu \cdot \x} u - v) \|_{L^q(\R; L^r(\R^n))} \lesssim 
\| e^{-\nu \cdot \x} u - v \|_{L^{q^\prime}(\R; L^{r^\prime}(H_{\hat{\nu}}))}
\| e^{\varepsilon |\hat{\nu} \cdot \x|} V \|_{L^a(\R; L^b(\R^n))}
\end{equation*}
for $\varepsilon < |\nu|$, where the implicit constant is
\[ \Big( \int_\R e^{-r^\prime \varepsilon |s|} \, \dd s \Big)^{1/r^\prime}. \]
\end{remark}

\begin{remark}\label{rem:erho_Vtilde_vnu}
Let $u$ be a $\nu$-admissible exponentially-growing solution for a potential $V$
as in the \cref{def:exp-gro}, and 
consider $\tilde V $ in the space $ L^a((0, T); L^b(\R^n))$, with 
$(a, b) \in [1, \infty] \times [1, \infty]$ satisfying \eqref{cond:V_ab}.

If we additionally assume that $e^{c|\x|} \tilde V \in L^a((0, T); L^b(\R^n))$, then
$e^{\rho |\x|} \tilde V u \in L^q(\R; L^r(\R^n))$ for all $\rho \geq 0$ such that
$|\nu| + \rho < c$.

Indeed, 
\begin{align*}
 \|e^{\rho |\x|}  \tilde V u & \|_{L^q(\R; L^r(\R^n))} \\
& \leq \| e^{\rho |\x| + \nu \cdot \x} \tilde V (e^{-\nu \cdot \x} u - v) \|_{L^q(\R; L^r(\R^n))} + \| e^{\rho |\x| + \nu \cdot \x} \tilde V v \|_{L^q(\R; L^r(\R^n))}.
\end{align*}

By Hölder's inequality we have that
\[ \| e^{\rho |\x| + \nu \cdot \x} \tilde V v \|_{L^q(\R; L^r(\R^n))} \leq \| e^{c|\x|} \tilde V \|_{L^a(\R; L^b(\R^n))} \| v \|_{L^{q^\prime}(\R; L^{r^\prime}(\R^n))} \]
and
\begin{align*}
\| & e^{\rho |\x| + \nu \cdot \x} \tilde V (e^{-\nu \cdot \x} u - v) \|_{L^q(\R; L^r(\R^n))} \\
& \leq \| e^{-\nu \cdot \x} u - v \|_{L^{q^\prime}(\R; L^{r^\prime}(H_{\hat{\nu}}))}
\Big( \int_\R e^{-r^\prime ((c -\rho) |s| - |\nu| s)} \, \dd s \Big)^{1/r^\prime}
\| e^{c |\x|} \tilde V \|_{L^a(\R; L^b(\R^n))}
\end{align*}
for $|\nu|< c - \rho$.
\end{remark}

\begin{remark}\label{rem:Vtilde_u1_u2}
Let $u_1^{\nu_1}$ and $u_2^{\nu_2}$ be ${\nu_1}$- and ${\nu_2}$-admissible exponentially-growing solutions for potentials $V_1$ and $V_2$ as in the \cref{def:exp-gro}.

If $\tilde V $ is as in the \cref{rem:erho_Vtilde_vnu} with 
$e^{c|\x|} \tilde V \in L^a((0, T); L^b(\R^n))$, then
$\tilde V u_1^{\nu_1} u_2^{\nu_2} \in L^1(\Sigma)$ whenever $|{\nu_1}| + |{\nu_2}|< c$.

Indeed, we have that
\begin{align*}
\tilde V u_1^{\nu_1} u_2^{\nu_2} 
&= e^{({\nu_1} + {\nu_2}) \cdot \x} \tilde V (e^{-{\nu_1} \cdot x} u_1^{\nu_1} - v_1^{\nu_1}) (e^{-{\nu_2} \cdot x} u_2^{\nu_2} - v_2^{\nu_2})\\
&\enspace + e^{({\nu_1} + {\nu_2}) \cdot \x} \tilde V (e^{-{\nu_1} \cdot x} u_1^{\nu_1} - v_1^{\nu_1}) v_2^{\nu_2} + e^{({\nu_1} + {\nu_2}) \cdot \x} \tilde V  v_1^{\nu_1} (e^{-{\nu_2} \cdot x} u_2^{\nu_2} - v_2^{\nu_2})\\
&\enspace + e^{({\nu_1} + {\nu_2}) \cdot \x} \tilde V  v_1^{\nu_1} v_2^{\nu_2}
\end{align*}

Writing $w_j^{\nu_j} = (e^{-{\nu_j} \cdot x} u_j^{\nu_j} - v_j^{\nu_j})$ we have that
\begin{align*}
\tilde V u_1^{\nu_1} u_2^{\nu_2} 
&= e^{({\nu_1} + {\nu_2}) \cdot \x + \varepsilon (|{{\hat \nu_1}} \cdot \x| + |{{\hat \nu_2}} \cdot \x|)} \tilde V e^{-\varepsilon |{{\hat \nu_1}} \cdot \x|} w_1^{\nu_1} e^{-\varepsilon |{{\hat \nu_2}} \cdot \x|}w_2^{\nu_2} \\
& \enspace +e^{({\nu_1} + {\nu_2}) \cdot \x+ \varepsilon |{{\hat \nu_1}} \cdot \x|} \tilde V e^{-\varepsilon |{{\hat \nu_1}} \cdot \x|}w_1^{\nu_1} v_2^{\nu_2} + e^{({\nu_1} + {\nu_2}) \cdot \x + \varepsilon |{{\hat \nu_2}} \cdot \x|} \tilde V  v_1^{\nu_1} e^{-\varepsilon |{{\hat \nu_2}} \cdot \x|} w_2^{\nu_2}\\
& \enspace + e^{({\nu_1} + {\nu_2}) \cdot \x} \tilde V  v_1^{\nu_1} v_2^{\nu_2}.
\end{align*}

Noticing that
\[\| e^{-\varepsilon |{{\hat \nu_j}} \cdot \x|} w_j^{\nu_j} \|_{L^{q^\prime}(\R; L^{r^\prime}(\R^n))} \lesssim \| e^{-{\nu_j} \cdot x} u_j^{\nu_j} - v_j^{\nu_j} \|_{L^{q^\prime}(\R; L^{r^\prime}(H_{\hat{\nu}_j}))}, \]
and choosing $\varepsilon > 0$ so that $|{\nu_1}| + |\nu_2| + 2\varepsilon < c$, we can apply
Hölder inequality to conclude that $\tilde V u_1^{\nu_1} u_2^{\nu_2} \in L^1(\Sigma)$ whenever 
$|{\nu_1}| + |{\nu_2}|< c$.
\end{remark}

\begin{theorem}\label{th:orthogolaity_CGO} \sl 
Consider $V_1, V_2 \in L^a((0, T); L^b(\R^n))$ with 
$(a, b) \in [1, \infty] \times [1, \infty]$ satisfying \eqref{cond:V_ab}.
If $(a, b) = (\infty, n/2)$ with $n \geq 3$, we also suppose 
$V_1 $ and $ V_2  $ belong to $ C ([0, T]; L^{n/2}(\R^n))$.

Let $\mathcal{U}_T^1$ and $\mathcal{U}_T^2$ denote the initial-to-final-state
maps associated to $V_1$ and $V_2$ respectively.

Then, if we additionally assume that there 
exists $c > 0$ so that $e^{c |\x|} V_j \in L^a((0, T); L^b(\R^n))$ for $j \in \{ 1, 2 \}$,
the equality 
$\mathcal{U}_T^1 = \mathcal{U}_T^2$ implies that
\[ \int_\Sigma (V_1 - V_2)u_1^\eta \overline{v_2^\nu}\, = 0 \]
whenever $|\eta| + |\nu| < c$ and
for all $\eta$- and $\nu$-admissible exponentially-growing solutions $u_1^\eta$ and $v_2^\nu$ 
of the equations 
\[ (i\partial_\tm + \Delta - V_1) u_1^\eta = 0 \enspace \textnormal{and} \enspace  (i\partial_\tm + \Delta - \overline{V_2}) v_2^\nu = 0 \enspace \textnormal{in} \enspace (0,T) \times \R^n. \]
\end{theorem}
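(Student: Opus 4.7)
The plan is to prove the orthogonality by a two-step bootstrap, mediated by Proposition~\ref{prop:CGO-PhS_int-by-parts} (integration by parts against an exponentially-growing function) and the dual-form lemmas~\ref{lem:orthogonal_rewardPB} and~\ref{lem:orthogonal_forwardPB}. First I will establish a ``mixed'' orthogonality in which only $u_1^\eta$ is exponentially growing while $v_2$ is a physical solution; then I will feed this mixed identity back into the same mechanism, this time using $v_2^\nu$ exponentially growing.

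For the mixed step, fix an $\eta$-admissible $u_1^\eta$ and let $v_2 \in X^\star$ be any physical solution of the final-value $\overline{V_2}$-problem. Set $G = (\overline{V_2} - \overline{V_1}) v_2$; by H\"older's inequality and $e^{c|\x|} V_j \in L^a((0,T); L^b(\R^n))$ we have $e^{c|\x|} G \in L^q((0,T); L^r(\R^n))$, so in particular $G \in X$. Let $\tilde v \in X^\star$ solve
\[ (i\partial_\tm + \Delta - \overline{V_1})\tilde v = G, \qquad \tilde v(T,\centerdot) = 0. \]
The hypothesis $\mathcal{U}_T^1 = \mathcal{U}_T^2$ combined with Proposition~\ref{prop:orthogonality} gives $\int_\Sigma \overline{G}\, u = -\int_\Sigma (V_1-V_2) u \overline{v_2} = 0$ for every physical $u$ of the $V_1$-initial-value problem. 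Lemma~\ref{lem:orthogonal_forwardPB} then yields $\tilde v(0, \centerdot) = 0$. I now invoke Proposition~\ref{prop:CGO-PhS_int-by-parts} with $\tilde v$ in the role of $u$ and $u_1^\eta$ in the role of $v^\nu$ (using $|\eta|<c$ and the decomposition provided by $\eta$-admissibility); expanding $(i\partial_\tm + \Delta)\tilde v = \overline{V_1}\tilde v + G$ on the left and $\overline{(i\partial_\tm+\Delta) u_1^\eta} = \overline{V_1}\,\overline{u_1^\eta}$ on the right and cancelling the symmetric $\overline{V_1}$ terms leaves $\int_\Sigma G \overline{u_1^\eta} = 0$, which after conjugation is the mixed identity $\int_\Sigma (V_1-V_2) u_1^\eta \overline{v_2} = 0$.

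For the bootstrap, now take both $u_1^\eta$ and $v_2^\nu$ admissible with $|\eta|+|\nu|<c$, and set $F = (V_1-V_2) u_1^\eta$. Remark~\ref{rem:erho_Vtilde_vnu} supplies $e^{c''|\x|} F \in L^q((0,T); L^r(\R^n))$ for any $c'' \in (|\nu|, c-|\eta|)$, an interval that is non-empty precisely because $|\eta|+|\nu|<c$. Let $\tilde w \in X^\star$ solve
\[ (i\partial_\tm + \Delta - V_2)\tilde w = F, \qquad \tilde w(0,\centerdot) = 0. \]
The mixed step gives $\int_\Sigma F \overline{v_2}=0$ for every physical $v_2$, so Lemma~\ref{lem:orthogonal_rewardPB} forces $\tilde w(T,\centerdot)=0$. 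Proposition~\ref{prop:CGO-PhS_int-by-parts} now applies to $\tilde w$ and $v_2^\nu$; writing $(i\partial_\tm + \Delta) \tilde w = V_2 \tilde w + F$ and $\overline{(i\partial_\tm + \Delta) v_2^\nu} = V_2 \overline{v_2^\nu}$, and cancelling the $V_2$ terms, produces $\int_\Sigma F \overline{v_2^\nu} = 0$, which is the desired identity. The main technical point is the bookkeeping of the exponential decay of the forcing terms: the gap $|\eta|+|\nu|<c$ is what opens the window $(|\nu|, c-|\eta|)$ for the decay rate of $F$, and without this splitting the hypothesis of Proposition~\ref{prop:CGO-PhS_int-by-parts} against $v_2^\nu$ would fail.
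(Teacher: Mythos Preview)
Your proof is correct and follows essentially the same two-step bootstrap as the paper, with the only difference being the order in which the two exponentially-growing solutions are introduced: the paper first upgrades the physical $v_2$ to $v_2^\nu$ (via an auxiliary $w_2$ with potential $V_2$) and then upgrades $u_1$ to $u_1^\eta$ (via an auxiliary $w_1^\nu$ with potential $\overline{V_1}$), whereas you do the reverse. The ingredients---Proposition~\ref{prop:orthogonality}, Lemmas~\ref{lem:orthogonal_rewardPB} and~\ref{lem:orthogonal_forwardPB}, Proposition~\ref{prop:CGO-PhS_int-by-parts}, and Remarks~\ref{rem:Sch_vnu} and~\ref{rem:erho_Vtilde_vnu}---are identical, and your bookkeeping of the decay window $c'' \in (|\nu|, c - |\eta|)$ matches the paper's.
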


\begin{proof}
We start by proving that if $\mathcal{U}_T^1 = \mathcal{U}_T^2$ then
\[ \int_\Sigma (V_1 - V_2)u_1 \overline{v_2^\nu}\, = 0 \]
for every $u_1$ physical solution of \eqref{pb:IVP} with potential $V_1$,
and every $\nu$-admissible exponentially-growing solution $v_2^\nu $,
with $\nu \in \R^n \setminus \{ 0 \}$ and $|\nu| < c$.

For every $u_1 \in X^\star$ solution of \eqref{pb:IVP} with potential 
$V_1 \in Y$, we have that $(V_1 - V_2)u_1 \in X$ since 
$V_1 - V_2 \in Y$. Let $w_2 \in X^\star$ be the solution
of the problem 
\[\left\{
		\begin{aligned}
		& (i\partial_\tm + \Delta -  V_2)w_2 = (V_1 - V_2)u_1 & & \textnormal{in} \, \Sigma, \\
		& w_2(0, \centerdot) = 0 &  & \textnormal{in} \, \R^n.
		\end{aligned}
\right.\]

After the identity $\mathcal{U}_T^1 = \mathcal{U}_T^2$,
we can apply the \cref{lem:orthogonal_rewardPB} with $F = (V_1 - V_2)u_1$
and using \eqref{id:integral_identity} to deduce that $w_2(T, \centerdot) = 0$.

Furthermore, since
$e^{c |\x|} V_j $ belong to $L^a((0, T); L^b(\R^n))$ for $j \in \{ 1, 2 \}$,
we have that $e^{c |\x|} (i\partial_\tm + \Delta)w_2 \in L^q((0, T); L^r(\R^n)) $ with $q$
and $r$ satisfying \eqref{id:indices4Vab}. One can check that, in fact,
$(q, r) \in [1, 2] \times [1, 2]$ satisfies \eqref{id:Strichartz_indeces}.
This means that $w_2$ shares
the same properties as $u$ in the \cref{prop:CGO-PhS_int-by-parts} for all
$\nu \in \R^n \setminus \{ 0\}$ with $|\nu| < c$.

Furthermore, every $\nu$-admissible exponentially-growing solution $v_2^\nu$ satisfies the 
same conditions as $v^\nu$ in the \cref{prop:CGO-PhS_int-by-parts} since 
$e^{-\nu \cdot \x} (i\partial_\tm + \Delta) v_2^\nu \in L^q(\R; L^r(\R^n))$, as we observed in 
the \cref{rem:Sch_vnu}.

Then, $w_2$ and $v_2^\nu$ satisfy the hypothesis of the 
\cref{prop:CGO-PhS_int-by-parts}, for all $\nu \in \R^n \setminus \{ 0\}$ with 
$|\nu| < c$, and we have the 
following integration-by-parts formula
\[\int_\Sigma (i\partial_\tm + \Delta) w_2 \overline{v_2^\nu} = \int_\Sigma w_2 \overline{(i\partial_\tm + \Delta)v_2^\nu} \,. \]

Observe that $e^{c|\x|}(V_1 - V_2)u_1 \in L^q((0, T); L^r(\R^n)) $ since 
$e^{c|\x|} (V_1 - V_2)$ belongs to $ L^a((0, T); L^b(\R^n))$ and $u_1 \in X^\star$.
By the exact same argument in \eqref{item:integration_by_parts} of the 
\cref{rem:integration_by_parts}, we can ensure that
$(V_1 - V_2)u_1 \overline{v_2^\nu} \in L^1(\Sigma)$ whenever $|\nu| < c$.

As a consequence of this fact, we can write
\[ \int_\Sigma (V_1 - V_2)u_1 \overline{v_2^\nu}\, = \int_\Sigma (i\partial_\tm + \Delta - V_2)w_2 \overline{v_2^\nu}\, = \int_\Sigma w_2 \overline{(i\partial_\tm + \Delta - \overline{V_2}) v_2^\nu}\, = 0. \]
In the first identity we have used that
$(i\partial_\tm + \Delta - V_2)w_2 = (V_1 - V_2)u_1$, in the second one we have
used the previous integration-by-parts formula for $w_2$ and $v_2^\nu$,
and in the last equality we have used that $(i\partial_\tm + \Delta - \overline{V_2}) v_2^\nu = 0$ in $\Sigma$.

This proves that
if $\mathcal{U}_T^1 = \mathcal{U}_T^2$ then
\begin{equation}
\label{id:orthogonal_phy-exp}
\int_\Sigma (V_1 - V_2)u_1 \overline{v_2^\nu}\, = 0
\end{equation}
for every $u_1$ physical solution with potential $V_1$,
and every $\nu$-admissible exponentia-lly-growing solution $v_2^\nu$,
with $\nu \in \R^n \setminus \{ 0 \}$ and $|\nu| < c$.

Finally, we will prove that the orthogonality relation 
\eqref{id:orthogonal_phy-exp} yields 
the one in the statement, that is,
\[ \int_\Sigma (V_1 - V_2)u_1^\eta \overline{v_2^\nu}\, = 0 \]
for every $\eta$- and $\nu$-admissible exponentially-growing solutions $u_1^\eta$ and $v_2^\nu$ with $|\eta| + |\nu| < c$.

Start by noticing that the \cref{rem:erho_Vtilde_vnu} with $\rho=0$ ensures that
$(\overline{V_1} - \overline{V_2}) v_2^\nu $ belongs to 
$ L^q((0,T); L^r(\R^n)) \subset X$
for all $\nu \in \R^n \setminus \{ 0\}$ with $|\nu| < c$.
Then, let $w_1^\nu \in X^\star$ be the solution
of the problem
\[\left\{
		\begin{aligned}
		& (i\partial_\tm + \Delta - \overline{V_1})w_1^\nu = (\overline{V_1} - \overline{V_2}) v_2^\nu & & \textnormal{in} \, \Sigma, \\
		& w_1(T, \centerdot) = 0 &  & \textnormal{in} \, \R^n.
		\end{aligned}
\right.\]

Since $ V_1 \in Y$ and $(\overline{V_1} - \overline{V_2}) v_2^\nu \in X$,
we can use \eqref{id:orthogonal_phy-exp} to  apply the \cref{lem:orthogonal_forwardPB} with 
$G = (\overline{V_1} - \overline{V_2}) v_2^\nu$ and deduce that $w_1^\nu(0, \centerdot) = 0$.
As we argued earlier to prove \eqref{id:orthogonal_phy-exp}, we want to show that,
given $\nu \in \R^n \setminus \{ 0 \}$ with  $|\nu| < c$, we have that
$ e^{c^\prime |\x|} (i\partial_\tm + \Delta)w_1^\nu \in L^q((0, T); L^r(\R^n)) $
for all $c^\prime \in (0, c - |\nu|)$.
In this way, we could conclude that $w_1^\nu$,
with $\nu \in \R^n \setminus \{ 0\}$ and $|\nu| < c$, shares
the same properties as $u$ in the \cref{prop:CGO-PhS_int-by-parts} for all 
$c^\prime < c - |\nu|$.

Since $e^{c |\x|} \overline{V_1} w_1^\nu \in L^q((0, T); L^r(\R^n))$, in order to prove that
$ e^{c^\prime |\x|} (i\partial_\tm + \Delta)w_1^\nu \in L^q((0, T); L^r(\R^n)) $
for all $c^\prime \in (0, c - |\nu|)$, we only have to check that
$ e^{c^\prime |\x|} (\overline{V_1} - \overline{V_2}) v_2^\nu \in L^q((0, T); L^r(\R^n)) $
for all $c^\prime < c - |\nu|$. However, this follows from the \cref{rem:erho_Vtilde_vnu}
for $\rho = c^\prime$.

Additionally, every $\eta$-admissible exponentially-growing solution
$u_1^\eta$ satisfies the same 
conditions as $v^\eta$ in the \cref{prop:CGO-PhS_int-by-parts} since 
$e^{-\eta \cdot \x} (i\partial_\tm + \Delta) u_1^\eta \in L^q(\R; L^r(\R^n))$, as we observed 
in the \cref{rem:Sch_vnu}.

Then, given $\nu \in \R^n \setminus \{ 0 \}$ with $|\nu| < c$, we have $w_1^\nu$ 
and $u_1^\eta$, with $\eta \in \R^n \setminus \{ 0 \}$ and $|\eta| < c^\prime$, 
satisfy the hypothesis of the \cref{prop:CGO-PhS_int-by-parts} for all
$c^\prime \in (0, c - |\nu|)$. Thus, we have that the integration-by-parts formula
\[\int_\Sigma (i\partial_\tm + \Delta) w_1^\nu \overline{u_1^\eta} = \int_\Sigma w_1^\nu \overline{(i\partial_\tm + \Delta)u_1^\eta} \, \]
holds for all $ \eta, \nu \in \R^n \setminus \{ 0 \}$ with $|\eta| + |\nu| < c$.

Finally, the \cref{rem:Vtilde_u1_u2} guarantees that
$(V_1 - V_2)u_1^\eta \overline{v_2^\nu} \in L^1(\Sigma)$, so we can write
\[ \int_\Sigma (V_1 - V_2)u_1^\eta \overline{v_2^\nu}\, = \int_\Sigma u_1^\eta \overline{(i\partial_\tm + \Delta - \overline{V_1}) w_1^\nu} \, = \int_\Sigma (i\partial_\tm + \Delta - V_1) u_1^\eta \overline{w_1^\nu} \, = 0. \]
In the first identity we have used that
$(i\partial_\tm + \Delta - \overline{V_1})w_1^\nu = (\overline{V_1} - \overline{V_2}) v_2^\nu$, in the second one we have
used the previous integration-by-parts formula for $w_1^\nu$ and $u_1^\eta$,
and in the last equality we have used that $(i\partial_\tm + \Delta - V_1) u_1^\eta = 0$ in $\Sigma$.

This proves that
the orthogonality relation
\[ \int_\Sigma (V_1 - V_2)u_1^\eta \overline{v_2^\nu}\, = 0 \]
holds for every every $\eta$- and $\nu$-admissible exponentially-growing solutions 
$u_1^\eta$ and $v_2^\nu$,
with $\eta, \nu \in \R^n \setminus \{ 0 \}$ and $|\eta| + |\nu| < c$.
\end{proof}

\section{Proof of the \cref{th:uniqueness}}
\label{sec:uniqueness}
Consider $V_1, V_2 \in L^a((0, T); L^b (\R^n))$ with 
$(a, b) \in [1, \infty] \times [1, \infty]$ 
satisfying \eqref{cond:V_ab}.
Recall that if $(a, b) = (\infty, n/2)$ with $n \geq 3$, we also have that
$V_1 $ and $ V_2  $ belong to $ C ([0, T]; L^{n/2}(\R^n))$.
Furthermore, for $j \in \{ 1, 2 \}$,
$e^{\rho |\x|} V_j \in L^a((0, T); L^b(\R^n))$ for all $\rho > 0$, and
\[ \sup_{\omega \in \Sph^{n-1}} \int_\R \| \mathbf{1}_{>R} V_j \|_{L^\infty ((0,T) \times H_{\omega, s})} \, \dd s < \infty\]
for $R = \max(R_1, R_2)$.

Let $V_1^{\rm ext}$ and $V_2^{\rm ext}$ denote 
denote either their extensions by zero outside $\Sigma$ if 
$(a, b) \neq (\infty, n/2)$, or two suitable continuous extensions with support in 
$[T^\prime ,T^{\prime \prime}] \times \R^n$
with $T^\prime < 0 < T < T^{\prime \prime}$ if $(a, b) = (\infty, n/2)$ with $n\geq 3$.

For $\nu \in \R^n \setminus \{ 0 \}$, choose
\[\varphi_1(t,x) = i |\nu|^2 t - \nu \cdot x , \quad \varphi_2(t,x) = i |\nu|^2 t + \nu \cdot x \qquad \forall (t, x) \in \R \times \R^n.\]

Let $u_1 = e^{\varphi_1} (u_1^\sharp + u_1^\flat)$ and 
$v_2 = e^{\varphi_2} (v_2^\sharp + v_2^\flat)$ denote the CGO solutions of
the \cref{th:CGO} for the equations
\[ (i\partial_\tm + \Delta - V_1^{\rm ext}) u_1 = 0 \enspace \textnormal{and} \enspace (i\partial_\tm + \Delta - \overline{V_2^{\rm ext}}) v_2 = 0 \enspace \textnormal{in} \enspace \R \times \R^n, \]
respectively.

For convenience, recall that for $\psi_1, \psi_2 \in \mathcal{S}(H_{\hat{\nu}})$ we chose
\begin{align*}
& u_1^\sharp (t, x) = \frac{1}{(2\pi)^\frac{n-1}{2}} \int_{H_{\hat{\nu}}} e^{ix \cdot \xi} e^{-i t |\xi|^2} {\psi_1}(\xi) \, \dd \sigma_{\hat{\nu}} (\xi) \quad \forall (t, x) \in \R \times \R^n, \\
& v_2^\sharp (t, x) = \frac{1}{(2\pi)^\frac{n-1}{2}} \int_{H_{\hat{\nu}}} e^{ix \cdot \xi} e^{-i t |\xi|^2} {\psi_2}(\xi) \, \dd \sigma_{\hat{\nu}} (\xi) \quad \forall (t, x) \in \R \times \R^n.
\end{align*}

For coherence with the choice $-\nu$ for $u_1$, we should have 
written, in the definition of $u_1^\sharp$, $H_{-\hat \nu}$ and $\sigma_{-\hat \nu}$
instead of $H_{\hat \nu}$ and $\sigma_{\hat \nu}$. But observe that 
$H_{\hat \nu} = H_{-\hat \nu}$ and $\sigma_{\hat \nu} = \sigma_{-\hat \nu}$.

We want to apply the \cref{th:orthogolaity_CGO} to conclude that
\[ \int_\Sigma (V_1 - V_2)u_1^\sharp \overline{v_2^\sharp}\, = -\int_\Sigma (V_1 - V_2)u_1^\sharp \overline{v_2^\flat}\, + \int_\Sigma (V_1 - V_2)u_1^\flat \overline{v_2^\sharp}\, + \int_\Sigma (V_1 - V_2)u_1^\flat \overline{v_2^\flat}\, . \]
In order to apply the \cref{th:orthogolaity_CGO}, we only need to take
$u^\eta = \chi u_1$ and $v^\nu = \chi v_2$ with $\chi$ a bump function of time variable 
that is identically $1$ in $(0,T)$. Note such functions are solutions in $(0,T) \times \R^n $.

By the Cauchy--Schwarz inequality, we have
\begin{align*}
\Big| \int_\Sigma (V_1 - V_2)u_1^\sharp \overline{v_2^\sharp}\, \Big| & \leq
\| |V_1 - V_2|^{1/2} u_1^\flat \|_{L^2(\R \times \R^n)} \| |V_1 - V_2|^{1/2} v_2^\sharp \|_{L^2(\R \times \R^n)} \\
& \enspace + \| |V_1 - V_2|^{1/2} u_1^\sharp \|_{L^2(\R \times \R^n)} \| |V_1 - V_2|^{1/2} v_2^\flat \|_{L^2(\R \times \R^n)} \\
& \enspace + \| |V_1 - V_2|^{1/2} u_1^\flat \|_{L^2(\R \times \R^n)} \| |V_1 - V_2|^{1/2} v_2^\flat \|_{L^2(\R \times \R^n)}.
\end{align*}

Letting $|\nu|$ tend to infinity, we deduce by the \cref{th:CGO}, more particularly by 
\eqref{boun:CGO-leading} and \eqref{lim:CGO-remainder}, that
\[ \int_\Sigma (V_1 - V_2) u_1^\sharp \overline{v_2^\sharp} \, = 0. \]

Write $F = V_1^{\rm ext} - V_2^{\rm ext}$, and note that it belongs to $L^1(\R \times \R^n)$.
By Fubini
\begin{align*}
& 0 = \int_{\R \times \R^n} F u_1^\sharp \overline{v_2^\sharp} \\
& = \frac{1}{(2\pi)^\frac{n-2}{2}} \int_{H_{\hat{\nu}} \times H_{\hat{\nu}}} \widehat{F} (|\eta|^2 - |\kappa|^2, \kappa - \eta) \psi_1(\eta) \overline{\psi_2(\kappa)} \, \dd \sigma_{\hat{\nu}} \otimes \sigma_{\hat{\nu}} (\eta, \kappa).
\end{align*}
Here $\widehat{F}$ denotes the Fourier transform of $F$:
\[ \widehat{F} (|\eta|^2 - |\kappa|^2, \kappa - \eta) = \frac{1}{(2\pi)^\frac{n+1}{2}} \int_{\R \times \R^n} F(t, x) e^{-i t |\eta|^2 + ix \cdot \eta} \overline{e^{-i t |\kappa|^2 + ix \cdot \kappa}} \, \dd (t,x).\]

Since the choices of $\psi_1$ and $\psi_2$ in $\mathcal{S}(H_{\hat{\nu}})$ are arbitrary, we can 
conclude by density that
\[ \widehat{F} (|\eta|^2 - |\kappa|^2, \kappa - \eta) = 0 \quad \forall \eta, \kappa \in H_{\hat \nu}. \]

Given $(\tau, \xi) \in \R \times \R^n$ such that $\xi \neq 0$,
we choose $\nu \in \R^n \setminus \{ 0 \}$ so that 
$\xi \cdot \nu = 0$ and, $\eta, \kappa \in \R^n$ as
\[\eta = -\frac{1}{2} \Big( 1 + \frac{\tau}{|\xi|^2} \Big) \xi, \qquad \kappa = \frac{1}{2} \Big( 1 - \frac{\tau}{|\xi|^2} \Big) \xi.\]
It is clear that $\eta, \kappa \in H_{\hat \nu}$, and after a simple computation we see that
$ |\eta|^2 - |\kappa|^2 = \tau $ and $ \kappa - \eta = \xi $. Hence
$ \widehat{F} (\tau, \xi) = 0 $ for all $(\tau, \xi) \in \R \times (\R^n \setminus \{ 0 \}) $. 

Since $F \in L^1(\R \times \R^n)$, we know that $\widehat{F}$ is 
continuous in $\R \times \R^n$, and consequently $\widehat{F}(\tau, \xi) = 0$ 
for all $(\tau, \xi) \in \R \times \R^n$.

By the injectivity of the 
Fourier transform we have that $F(t, x) = 0$ for almost every 
$(t, x) \in \R \times \R^n$, which implies that $V_1 (t, x) = V_2(t, x)$ for
almost every $(t, x) \in \Sigma$. This concludes the proof of the \cref{th:uniqueness}.

\section{Differences with the Calderón problem}\label{sec:bourgain}
In the context of the Calderón problem, there is an alternative approach that avoids the use of 
the Lavine--Nachman trick. This approach involves Bourgain spaces. In this section we explain why 
this approach fails for the initial-to-final state inverse problem.

\subsection{The Bourgain spaces in the Calderón problem}
Calder\'on posed in \cite{zbMATH05684831} the following inverse 
boundary value problem: Let $D$ be a bounded domain in $\R^n$ ($n \geq 2$) 
with Lipschitz boundary $\partial D$, and let $\gamma$ be a real bounded 
measurable function in $D$ with a positive lower bound. Consider 
the Dirichlet-to-Neumann map $\Lambda_\gamma : H^{1/2} (\partial D) \to H^{-1/2} 
(\partial D)$ 
defined by
\[ \Lambda_\gamma f = \gamma \partial_\nu u|_{\partial D} \]
where $\partial_\nu = \nu \cdot \nabla$, with $\nabla$ denoting the gradient 
and $\nu$ denoting the outward 
unit normal vector to $\partial D$,
and $ u \in H^1 (D) $ is the solution of the boundary value problem
\begin{equation*}
\left\{
\begin{aligned}
\nabla \cdot (\gamma \nabla u)  &
= 0 \,  \textnormal{in} \, D,\\
u|_{\partial D} &= f.
\end{aligned}
\right.
\end{equation*}
The inverse Calder\'on problem
is to decide whether the conductivity $\gamma$ is uniquely determined by 
$\Lambda_\gamma$, and to calculate $\gamma$ in terms of $\Lambda_\gamma$ 
whenever the unique determination is possible.

Sylvester and Uhlmann proved uniqueness for smooth conductivities in \cite{zbMATH04015323}
for dimension $n \geq 3$. However, the key points of their method 
only requires the conductivity to have bounded second-order partial 
derivatives \cite{zbMATH04050176}. After this piece of work, and some others as 
\cite{zbMATH03998383} by Alssandrini and \cite{zbMATH04105476} by Nachman, a standard 
strategy to address the Calderón problem has been established: One starts by showing
\[ \Lambda_{\gamma_1} = \Lambda_{\gamma_2} \Rightarrow \int_{\R^n} (q_1 - q_2) v_1 v_2 \, = 0,\]
where $q_j = \gamma_j^{-1/2} \Delta \gamma_j^{1/2}$ and $v_j$ is any 
solution of $- \Delta v_j + q_j v_j = 0$ in $\R^n$. Then, one constructs CGO solutions of the 
form:
\[v_j = e^{\zeta_j \cdot {\rm x}} (1 + w_j) \]
with $\zeta_j \in \C^n$ and $\zeta_j \cdot \zeta_j = 0$ for $ j \in \{1, 2 \}$, and so that
$w_j$ tends to vanish in some sense as $|\zeta_j|$ goes to $\infty$. Finally, one makes
appropriate choices of $\zeta_1$ and $\zeta_2$ so that one concludes the uniqueness.
The work of Sylvester and Uhlmann has been followed many attempts to lower the regularity
assumed, see for example \cite{zbMATH00912089, zbMATH02102106, zbMATH06145493, zbMATH06490961, 
zbMATH06534426, zbMATH07373390, zbMATH07395052}.

In \cite{zbMATH06145493}, Haberman and Tataru 
introduced the Bourgain spaces $\dot{Y}_\zeta^s$ with 
$s \in \{ 1/2, -1/2 \}$. These 
spaces have the norms
\[ \| f \|_{\dot{Y}_\zeta^s} = \| |q_\zeta|^s \widehat{f} \|_{L^2(\R^n)}, \]
where $q_\zeta$ stands for the symbol of the differential operator 
$\Delta + 2\zeta \cdot \nabla$, that is,
\[ q_\zeta (\xi) = - |\xi|^2 + 2i \zeta \cdot \xi \quad \forall \xi \in \R^n. \]
These spaces where introduced with the idea of making possible an average with respect to the
free parameters $|\Re \zeta|$ and $\Re \zeta/|\Re \zeta|$,
so that the correction terms of the CGO solutions
tend to vanish when the conductivity is assumed to be continuously differentiable. However,
the space $\dot{Y}_\zeta^{1/2}$ has other properties such as the embeddings
\begin{equation}
\label{in:embeddings_HT-H}
|\zeta|^{1/2} \| f \|_{B^\ast(\R^n)} + \| f \|_{L^{p_n}(\R^n)} 
\lesssim \| f \|_{\dot{Y}^{1/2}_\zeta}
\end{equation}
for every $f \in \mathcal{S}(\R^n)$---see \cite{zbMATH06145493, zbMATH06490961}.
Here $p_n$ is defined by
the relation $1/p_n = 1/2 - 1/n$ for $n \geq 3$, and
\[ \| f \|_{B^\ast(\R^n)} = \sup_{j\in\N_0}\big(2^{-j/2}\| f \|_{L^2(D_j)}\big) \]
with $D_j=\{x\in \R^n : 2^{j-1}<|x|\leq 2^j\}$ for $j \in \N$ and 
$D_0 = \{ x \in \R^n : |x| \leq 1 \}$.

The fact that the Bourgain space $\dot{Y}_\zeta^{1/2}$ enjoys the embeddings in
\eqref{in:embeddings_HT-H} makes it possible to prove uniqueness for the Calderón problem
for conductivities satisfying $\nabla \gamma \in L^n(D)^n$ and 
$\Delta \gamma \in L^{n/2} (D)$, without using the Lavine--Nachman trick.
For this reason, one could think that the use of Bourgain spaces 
in the context of the initial-to-final 
state inverse problem would avoid the use of the Lavine--Nachman trick. 
Unfortunately, this is not the case. In fact, we will see in this 
section the limitations of Bourgain spaces to deal with our problem.

The Calder\'on problem has been extensively studied: 
uniqueness, reconstruction and stability. A non-comprehensive list of
references is the following. Uniqueness in dimension 
$n = 2$: \cite{zbMATH00854849, zbMATH01044195, zbMATH05050053}.
For reconstruction: 
\cite{zbMATH04105476, zbMATH06659335, zbMATH08080725}.
For stability: \cite{zbMATH03998383, zbMATH04028037, zbMATH00004861,
zbMATH01649244, zbMATH05223898, zbMATH05704414, zbMATH06117512}.

\subsection{The Bourgain spaces for the initial-to-final state inverse problem}
In the context of the initial-to-final state inverse problem one can introduce the Bourgain 
spaces $\dot{X}_\nu^s$ with 
$s \in \{ 1/2, -1/2 \}$. These 
spaces have the norms
\[ \| u \|_{\dot{X}_\nu^s} = \| |p_\nu|^s \widehat{u} \|_{L^2(\R \times \R^n)}, \]
for $u \in \mathcal{S}(\R \times \R^n)$, with $p_\nu$  as in \eqref{id:symbol}.
One can prove that
there exists an absolute constant $C > 0$ such that
\begin{equation}
\label{in:embedding_half-in}
|\nu|^{1/4} \| u \|_{L^2 ((0,T) \times B_R)} \leq C T^{1/4} R^{1/4} \| u \|_{\dot{X}^{1/2}_{\nu}}
\end{equation}
for all $u \in \mathcal{S}(\R \times \R^n)$. Here $B_R = \{ x \in \R^n : |x| < R \}$. In fact,
this inequality can be though as half of the inequality for $S_\nu$ that we used in our 
previous work \cite[inequality (27)]{10.1063/5.0152372}---for that it is enough to chain 
\eqref{in:embedding_half-in} with its dual. 
However, despite what happens in the context of the Calderón problem,
the Bourgain space for $s=1/2$ is not embedded in the mixed-norm Lebesgue spaces appearing
in the \cref{th:non-gain_Snu}. In fact, we will show that the inequality
\begin{equation}
\label{in:parameter_embedding}
\| u \|_{L^{q^\prime}(\R; L^{r^\prime}(\R^n))} \lesssim \| u \|_{\dot{X}^{1/2}_\nu} \quad \forall u \in \mathcal{S}(\R \times \R^n)
\end{equation}
does not hold for $(q^\prime, r^\prime) \in [2, \infty] \times [2, \infty]$ satisfying 
\eqref{id:Strichartz_dual-indeces}.
Using \eqref{id:rescaling}, one can see that \eqref{in:parameter_embedding} is equivalent
to
\begin{equation}
\label{in:free_parameter_embedding}
\| u \|_{L^{q^\prime}(\R; L^{r^\prime}(\R^n))} \lesssim \| |p|^{1/2} \widehat{u} \|_{L^2(\R \times \R^n)} \quad \forall u \in \mathcal{S}(\R \times \R^n),
\end{equation}
with $p$ as in \eqref{id:normalized_symbol}.
Our goal here is to give a counterexample for the inequality 
\eqref{in:free_parameter_embedding}. Before that, we make some general comments.

In \cite{zbMATH05035890}, Tao states that the inequality
\begin{equation}
\label{in:Tao_embedding}
\| u \|_{L^{q^\prime}(\R; L^{r^\prime}(\R^n))} \lesssim \| \langle \Re p \rangle^b \widehat{u} \|_{L^2(\R \times \R^n)} \qquad \forall u \in \mathcal{S}(\R \times \R^n)
\end{equation}
holds for $b > 1/2$, where 
$\langle \Re p \rangle = (1 + |\Re p|^2)^{1/2}$ with $ \Re p(\tau, \xi) = \tau - |\xi|^2 $ for 
all $(\tau, \xi) \in \R \times \R^n$. This embedding of the non-homogenous Bourgain space
follows from \cite[Corollary 2.10]{zbMATH05035890}, which states that
\begin{equation}
\label{in:corollary2.10}
\sup_{t \in \R} \| u (t, \centerdot) \|_{L^2(\R^n))} \lesssim \| \langle \Re p \rangle^b \widehat{u} \|_{L^2(\R \times \R^n)} \qquad \forall u \in \mathcal{S}(\R \times \R^n)
\end{equation}
whenever $b > 1/2$.
In \cite[Exercise 2.71]{zbMATH05035890}, the reader is asked to
prove that the inequality \eqref{in:corollary2.10} fails in the endpoint $b=1/2$.
Here we go beyond that and show a counterexample for the embedding \eqref{in:Tao_embedding} 
for $b=1/2$. This counterexample will be adapted to eventually prove that
\eqref{in:free_parameter_embedding} does not hold.

\subsubsection{Failure of \eqref{in:Tao_embedding} at the endpoint}
\label{sec:failure_end-point}
In order to point out why \eqref{in:Tao_embedding} fails at the endpoint $b = 1/2$, 
we test the norms involved in the inequality with functions 
$u \in \mathcal{S}(\R \times \R^n)$ of the form
\begin{equation}
\label{id:u_product}
\widehat{u}(\tau, \xi) = \widehat{g}(\tau - |\xi|^2) \widehat{f}(\xi) \qquad \forall (\tau, \xi) \in \R \times \R^n
\end{equation}
with $g \in \mathcal{S}(\R)$ and $f \in \mathcal{S}(\R^n)$. One can check that
\begin{equation}
\label{id:norm_of_product}
\| \langle \Re p \rangle^{1/2} \widehat{u} \|_{L^2(\R \times \R^n)} = \| g \|_{H^{1/2}(\R)} \| f \|_{L^2(\R^n)},
\end{equation}
and
\[ u(t, x) = \frac{g(t)}{(2\pi)^{n/2}} \int_{\R^n} e^{i x \cdot \xi} e^{i t |\xi|^2} \widehat{f}(\xi) \, \dd \xi. \]
Noting that $2/{q^\prime} = n/2 - n/{r^\prime}$ defines admissible Strichartz pairs for the 
Schr\"odinger equation we have that
\[ \| u \|_{L^{q^\prime}(\R; L^{r^\prime}(\R^n))} \lesssim \| g \|_{L^\infty(\R)} \| f \|_{L^2(\R^n)}. \]
Then, by \eqref{id:norm_of_product} we have that
\[ \| u \|_{L^{q^\prime}(\R; L^{r^\prime}(\R^n))} \lesssim \frac{\| g \|_{L^\infty(\R)}}{\| g \|_{H^{1/2}(\R)}} \| \langle \Re p \rangle^{1/2} \widehat{u} \|_{L^2(\R \times \R^n)}. \]
Thus, in order to derive \eqref{in:Tao_embedding} from the previous inequality for 
the family of functions defined as \eqref{id:u_product} we should ensure that the 
quotient $ \| g \|_{L^\infty(\R)} / \| g \|_{H^{1/2}(\R)} $ remains bounded for all
$g \in \mathcal{S}(\R)$. Note the relation of this quotient with the endpoint 
Sobolev embedding, which is known to fail. Thus, one should expect that choosing 
an unbounded $g \in H^{1/2}(\R) $, we could construct a counterexample for 
\eqref{in:Tao_embedding} with $b=1/2$. This will be our strategy.

Consider $\chi \in \mathcal{S}(\R^2)$ such that 
$\supp \chi \subset \{ z \in \R^2 : |z| < 1/e \}$
with $\chi(z) = 1$ whenever $|z|\leq 1/(2e)$. The function
\[ z \in \R^2 \setminus \{ 0\} \longmapsto \chi(z) \log (\log 1/|z|) \in [0, \infty) \]
can be extended to represent a $v \in H^1(\R^2)$. 
If $g$ denotes the trace of $v$ to 
$\R \times \{0\}$, we can ensure that $g \in H^{1/2}(\R)$ and 
$g(t) \geq \log (\log 1/\delta) $ for almost every $t \in (-\delta, \delta) $ with 
$ \delta \in (0, 1/(2e)]$.

Let $f_\rho \in L^2(\R^n)$ with $\rho > 0$ be so that
\[ \widehat{f_\rho}(\xi) = \rho^{-n/2} \mathbf{1}_{<\rho} (\xi) \qquad \forall \xi \in \R^n. \]
It is clear that 
$\| f_\rho \|_{L^2(\R^n)} = \| \mathbf{1}_{<1} \|_{L^2(\R^n)}$
which is independent of $\rho$.

We consider the family $ \{ u_\rho : \rho^2 \geq e \}$ with
\[ u_\rho(t, x) = \frac{g(t)}{(2\pi)^{n/2}} \int_{\R^n} e^{i x \cdot \xi} e^{i t |\xi|^2} \widehat{f_\rho}(\xi) \, \dd \xi \qquad \dot \forall (t, x) \in \R \times \R^n.\footnote{The
symbol $\dot{\forall}$ means 'for almost every'.} \]
As in \eqref{id:norm_of_product}, one can check that
\begin{equation}
\label{id:constant_norm}
\| \langle \Re p \rangle^{1/2} \widehat{u}_\rho \|_{L^2(\R \times \R^n)} = \| g \|_{H^{1/2}(\R)} \| \mathbf{1}_{<1} \|_{L^2(\R^n)}.
\end{equation}
Furthermore,
\[ |u_\rho (t, x)| \geq \rho^{-n/2} \frac{g(t)}{(2\pi)^{n/2}} \int_{\R^n} \cos ( x \cdot \xi + t |\xi|^2) \mathbf{1}_{<\rho} (\xi) \, \dd \xi. \]
Note that if $| x \cdot \xi + t |\xi|^2| \leq 1 < \pi/3$, then $\cos ( x \cdot \xi + t |\xi|^2) > 1/2$.
Thus, for $|x|\leq 1/(2 \rho)$ and $|t|\leq 1/(2 \rho^2)$ we have that
\[\cos ( x \cdot \xi + t |\xi|^2) \mathbf{1}_{<\rho} (\xi) > 1/2, \]
and consequently,
\[ |u_\rho (t, x)| \gtrsim \rho^{n/2} \log(\log 2 \rho^2) \]
whenever $|x|\leq 1/(2 \rho)$ and $|t|\leq 1/(2 \rho^2)$. This means that
\[ \| u_\rho \|_{L^{q^\prime}(\R; L^{r^\prime}(\R^n))} \gtrsim \rho^{n/2} \rho^{-2/q^\prime} \rho^{-n/r^\prime} \log(\log 2 \rho^2). \]
Since $2/q^\prime = n/2 - n/r^\prime$ we see that
\[ \| u_\rho \|_{L^{q^\prime}(\R; L^{r^\prime}(\R^n))} \gtrsim \log(\log 2 \rho^2). \]
This inequality together with the identity \eqref{id:constant_norm} shows that \eqref{in:Tao_embedding} with $b=1/2$ fails.

\subsubsection{Counterexample to \eqref{in:free_parameter_embedding}}
From the failure of \eqref{in:Tao_embedding} for $b=1/2$, we can not immediately 
deduce that \eqref{in:free_parameter_embedding} fails
since the counterexample we presented use a family of functions 
whose Fourier support is not contained in $\{ \xi \in \R^n : |\xi_n| \leq 1 \}$.
However, the main points of the counterexample are robust enough to be adapted 
to the current situation.

As we did earlier in the \cref{sec:failure_end-point}, 
consider $\chi \in \mathcal{S}(\R^2)$ such that 
$\supp \chi \subset \{ z \in \R^2 : |z| < 1/e \}$
with $\chi(z) = 1$ whenever $|z|\leq 1/(2e)$. The function
\[ z \in \R^2 \setminus \{ 0\} \longmapsto \chi(z) \log (\log 1/|z|) \in [0, \infty) \]
can be extended to represent a $v \in H^1(\R^2)$. 
If $g$ denotes the trace of $v$ to 
$\R \times \{0\}$, we can ensure that $g \in H^{1/2}(\R)$ and 
$g(t) \geq \log (\log 1/\delta) $ for almost every $t \in (-\delta, \delta) $ with 
$ \delta \in (0, 1/(2e)]$.
Define
\[ g_\rho (t) = g(\rho t) \qquad \dot \forall t \in \R. \]

Let $f_\rho \in L^2(\R^n)$ with $\rho > 0$ be so that
\[ \widehat{f_\rho}(\xi) = \rho^{-n/2} \mathbf{1}_{<\rho} (\xi - 2 \rho e_n) \qquad \forall \xi \in \R^n, \]
It is clear again that 
$\| f_\rho \|_{L^2(\R^n)} = \| \mathbf{1}_{<1} \|_{L^2(\R^n)}$
which is independent of $\rho$.

We consider the family $ \{ u_\rho : \rho \geq e/9 \}$ satisfying
\[ \widehat{u_\rho} (\tau, \xi) = \widehat{g_\rho} (\tau - |\xi|^2) \widehat{f_\rho} (\xi) \qquad \forall (\tau, \xi) \in \R \times \R^n. \]
One can check that
\[ u_\rho(t, x) = \frac{g_\rho(t)}{(2\pi)^{n/2}} \int_{\R^n} e^{i x \cdot \xi} e^{i t |\xi|^2} \widehat{f_\rho}(\xi) \, \dd \xi \qquad \dot \forall (t, x) \in \R \times \R^n. \]
Additionally, since 
$\supp \widehat{f_\rho} \subset \{ \xi \in \R^n : \rho < |\xi_n| < 3\rho \}$
we have
\[ \| u_\rho \|_{\dot{X}^{1/2}} \eqsim \| |\Re p + i \rho|^{1/2} \widehat{u_\rho} \|_{L^2(\R \times \R^n)} = \| |\centerdot + i \rho|^{1/2} \widehat{g_\rho} \|_{L^2(\R)} \| \widehat{f_\rho} \|_{L^2(\R^n)}. \]
Since $g_\rho$ is defined as an $L^\infty(\R)$ scaling of $g$, we can see that
\[ \| |\centerdot + i \rho|^{1/2} \widehat{g_\rho} \|_{L^2(\R)} = \| g \|_{H^{1/2}(\R)}, \]
and consequently, we have that
\begin{equation}
\label{eq:constant_norm}
\| u_\rho \|_{\dot{X}^{1/2}} \eqsim \| g \|_{H^{1/2}(\R)} \| \mathbf{1}_{<1} \|_{L^2(\R^n)}.
\end{equation}

Once again,
\[ |u_\rho (t, x)| \geq \rho^{-n/2} \frac{g_\rho(t)}{(2\pi)^{n/2}} \int_{\R^n} \cos ( x \cdot \xi + t |\xi|^2) \mathbf{1}_{<\rho} (\xi - 2 \rho e_n) \, \dd \xi. \]
Note that if $| x \cdot \xi + t |\xi|^2| \leq 1 < \pi/3$, then $\cos ( x \cdot \xi + t |\xi|^2) > 1/2$.
Thus, since $\supp \widehat{f_\rho} \subset \{ \xi \in \R^n: |\xi| < 3\rho  \}$
we have for $|x|\leq 1/(6 \rho)$ and $|t|\leq 1/(18 \rho^2)$ that
\[\cos ( x \cdot \xi + t |\xi|^2) \mathbf{1}_{<\rho} (\xi - 2 \rho e_n) > 1/2, \]
and consequently,
\[ |u_\rho (t, x)| \gtrsim \rho^{n/2} g(\rho t) \geq \rho^{n/2} \log(\log 18 \rho) \]
whenever $|x|\leq 1/(6 \rho)$ and $|t|\leq 1/(18 \rho^2)$.
This means that
\[ \| u_\rho \|_{L^{q^\prime}(\R; L^{r^\prime}(\R^n))} \gtrsim \rho^{n/2} \rho^{-2/q^\prime} \rho^{-n/r^\prime} \log(\log 18 \rho). \]
Since $2/q^\prime = n/2 - n/r^\prime$ we see that
\[ \| u_\rho \|_{L^{q^\prime}(\R; L^{r^\prime}(\R^n))} \gtrsim \log(\log 18 \rho). \]
This inequality together with the estimate \eqref{eq:constant_norm} shows that 
\eqref{in:free_parameter_embedding} fails.

\subsubsection{Further thoughts about the \eqref{in:free_parameter_embedding}}
A more careful look at our counterexample for \eqref{in:free_parameter_embedding} indicates that
this embedding also fails when $2/q^\prime < n/2 - n/r^\prime$. However, it does not say
anything when $2/q^\prime > n/2 - n/r^\prime$.

We have not analysed the latter case, despite that it could be interesting. Indeed, if
\eqref{in:free_parameter_embedding} held true for $2/q^\prime > n/2 - n/r^\prime$, 
by the scaling rule \eqref{id:rescaling}, the inequality
\eqref{in:parameter_embedding} would hold with a gain of $|\nu|^{2/q^\prime - n/2 + n/r^\prime}$:
\begin{equation}
\label{emb:out-scaleinvariance}
|\nu|^{2/q^\prime - n/2 + n/r^\prime} \| u \|_{L^{q^\prime}(\R; L^{r^\prime}(\R^n))} \lesssim \| u \|_{\dot{X}^{1/2}_\nu} \quad \forall u \in \mathcal{S}(\R \times \R^n).
\end{equation}
This case would allow us to deal
with potentials in $ L^a((0, T); L^b (\R^n))$ for
$n \geq 2$ and $2-2/a>n/b$, without appealing to the Lavine--Nachman trick. 
Furthermore, this embedding would immediately imply
an extension of the \cref{th:non-gain_Snu} of the following form:
\begin{equation}
\label{in:Snu_out-scaleinvariace}
|\nu|^{2/q_1^\prime - n/2 + n/r_1^\prime} \| S_\nu f \|_{L^{q_1^\prime} (\R; L^{r_1\prime}(\R^n))} \lesssim |\nu|^{n/r_2 - n/2 -2 +2/q_2 } \| f \|_{L^{q_2} (\R; L^{r_2}(\R^n))}
\end{equation}
for all $f \in \mathcal{S}(\R \times \R^n)$
with $2/q_1^\prime > n/2 - n/r_1^\prime$ and $2-2/q_2 > n/r_2 - n/2$.

It is worth mentioning that the inequality \eqref{in:Snu_out-scaleinvariace}
with $2/q_1^\prime \geq n/2 - n/r_1^\prime$ and $2-2/q_2 \geq n/r_2 - n/2$ could hold even if 
the embedding \eqref{emb:out-scaleinvariance} fails.
Fortunately, this situation would still allow to deal
with potentials in $ L^a((0, T); L^b (\R^n))$ for
$n \geq 2$ and $2-2/a>n/b$, without appealing to the Lavine--Nachman trick.

Comparing this situation with the elliptic setting of the Calderón problem,
\eqref{in:Snu_out-scaleinvariace} corresponds to the full Carlmen inequality
that Kenig, Ruiz and Sogge proved in \cite{zbMATH04050093}---not
only in the scale-invariant situation. In fact, this
inequality is enough to prove uniqueness of the Calderón problem
for conductivities satisfying $\nabla \gamma \in L^{2p}(D)^n$ and 
$\Delta \gamma \in L^p (D)$ for $p>n/2$, without using the Lavine--Nachman trick.

\appendix

\section{Sketch of the proof of the \cref{lem:integration_by_parts}}\label{app:integration_by_parts}

We follow here the same argument we used to prove \cite[Proposition 4.2]{10.1063/5.0152372}.

\begin{proof}[Sketch of the proof of the \cref{lem:integration_by_parts}]
Start by noticing that under the assumptions of the 
\cref{lem:integration_by_parts}, we can apply the dominate convergence theorem 
to ensure that
\begin{equation}
\label{lim:delta}
\int_\Sigma \big[ (i\partial_\tm + \Delta) u \overline{v} - u \overline{(i\partial_\tm + \Delta)v} \big] \, = \lim_{\delta \to 0} \int_{\Sigma_\delta} \big[ (i\partial_\tm + \Delta) u \overline{v} - u \overline{(i\partial_\tm + \Delta)v} \big]
\end{equation}
where $\Sigma_\delta = (\delta, T - \delta) \times \R^n$.
Now, we will approximate $u$ and $v$ by $u_\varepsilon$ and 
$v_\varepsilon$ respectively, which will be smooth in $\Sigma_\delta$ and 
compactly supported in space.

Consider a smooth cut-off $\chi \in \mathcal{S}(\R^n)$ 
such that $ 0 \leq \chi(x) \leq 1 $ for 
all $x \in \R^n$, $\supp \chi \subset \{ x \in \R^n: |x| \leq 2 \}$ and
$ \chi(x) = 1 $ whenever $|x| \leq 1$. Consider $\varphi \in \mathcal{S}(\R \times \R^n)$
such that $\supp \varphi \subset [-1, 1] \times K$ with $K$ a compact subset of $\R^n$, 
$\int_{\R \times \R^n} \varphi = 1$ and $\varphi(t, x) \in [0, \infty)$
for all $(t, x) \in \R \times \R^n$. For $R, \varepsilon \in (0, \infty) $, define 
$\chi^R (x) = \chi (x/R)$ for $x \in \R^n$,
$\varphi_\varepsilon (t, x) = \varepsilon^{-(n+1)} \varphi (t/\varepsilon, x/\varepsilon) $ 
for $(t, x) \in \R \times \R^n$.

Let $(X_\delta, \| \centerdot \|_{X_{\delta}})$ and
$(X_\delta^\star, \| \centerdot \|_{X_{\delta}^\star})$ denote the Banach spaces defined as 
$(X, \| \centerdot \|_X)$ and $(X^\star, \| \centerdot \|_{X^\star})$ for the time interval 
$(\delta, T-\delta)$ instead of $(0, T)$.

For $w \in X^\star$ and $\varepsilon < \delta$, 
we consider
\[w_\varepsilon: (t, x) \in \Sigma_\delta \mapsto \chi^{R(\varepsilon)} (x)  \int_\Sigma \varphi_{\varepsilon} (t- s, x - y) w (s, y) \, \dd (s, y),\]
where $\varepsilon \in (0, \delta) \mapsto R(\varepsilon) \in (0, \infty)$ is continuous and 
$\lim_{\varepsilon \to 0} R(\varepsilon) = \infty$.

One can check that
\begin{equation}
\label{lim:t-L2_approx}
\lim_{\varepsilon \to 0} \| w(t, \centerdot) - w_\varepsilon (t, \centerdot)  \|_{L^2(\R^n)} = 0  \qquad \forall t \in [\delta, T - \delta],
\end{equation}
and 
\begin{equation}
\label{lim:Lq'Lr'_approx}
\lim_{\varepsilon \to 0} \| w - w_\varepsilon \|_{L^{q^\prime}((\delta, T - \delta); L^{r^\prime}(\R^n))} = 0.
\end{equation}

Furthermore, if $(i\partial_\tm + \Delta) w \in X_\delta$, then
\begin{equation}
\label{lim:X_approx}
\lim_{\varepsilon \to 0} \| (i\partial_\tm + \Delta)w - (i\partial_\tm + \Delta)w_\varepsilon  \|_{X_{\delta}} = 0.
\end{equation}

For the sake of clarity, let us discuss why \eqref{lim:X_approx} holds.
Let $\tilde{w}$ denote the 
trivial extension of $w$
\[ \tilde w (t, \centerdot) = \left\{  
\begin{aligned} 
&w(t, \centerdot) &  & t \in [0, T], \\
&0 & & t \notin [0, T].
\end{aligned}
\right.\]
Then, we have that
$ w_\varepsilon (t, x) = \chi_\varepsilon (x) (\varphi_\varepsilon \ast \tilde{w}) (t, x) $
for all $(t,x) \in \Sigma_\delta$.

In order to show that \eqref{lim:X_approx} holds, 
let us compute
\[(i\partial_\tm + \Delta)w_\varepsilon = \chi^{R(\varepsilon)} (i\partial_\tm + \Delta) (\varphi_\varepsilon \ast \tilde{w}) + 2 \nabla \chi^{R(\varepsilon)} \cdot \nabla (\varphi_\varepsilon \ast \tilde{w}) + \Delta \chi^{R(\varepsilon)} (\varphi_\varepsilon \ast \tilde{w}) \enspace \textnormal{in} \enspace \Sigma_\delta. \]

Observe, that \eqref{lim:X_approx} would follow from the following limits
\[ \lim_{\varepsilon \to 0} \| (i\partial_\tm + \Delta)w - \chi^{R(\varepsilon)} (i\partial_\tm + \Delta) (\varphi_\varepsilon \ast \tilde{w})  \|_{X_{\delta}} = 0, \]
\begin{equation}
\label{lim:grad_chi}
\lim_{\varepsilon \to 0} \| \nabla \chi^{R(\varepsilon)} \cdot \nabla (\varphi_\varepsilon \ast \tilde{w}) \|_{X_{\delta}} = 0,
\end{equation}
and
\[ \lim_{\varepsilon \to 0} \| \Delta \chi^{R(\varepsilon)} (\varphi_\varepsilon \ast \tilde{w}) \|_{X_{\delta}} = 0. \]

The previous three limits follow from standard arguments, however, for the limit
\eqref{lim:grad_chi} we have to choose the function 
$\varepsilon \in (0, \delta) \mapsto R(\varepsilon) \in (0, \infty)$ to satisfies the 
condition $\lim_{\varepsilon \to 0} \varepsilon R(\varepsilon) = \infty$. With this extra
condition, we have that \eqref{lim:X_approx} holds.

We now apply these convergences to
$u_\varepsilon$ and $v_\varepsilon$.
By the standard integration-by-parts rules we have that
\[\int_{\Sigma_\delta} \big[ (i\partial_\tm + \Delta) u_\varepsilon \overline{v_\varepsilon} - u_\varepsilon \overline{(i\partial_\tm + \Delta)v_\varepsilon} \big] \, = i \int_{\R^n} \big[ 
u_\varepsilon(T - \delta, \centerdot) \overline{v_\varepsilon (T - \delta,\centerdot)} - u_\varepsilon(\delta, \centerdot) \overline{v_\varepsilon (\delta,\centerdot)} \big] \,. \]

After \eqref{lim:t-L2_approx}, \eqref{lim:Lq'Lr'_approx} and \eqref{lim:X_approx}
we can conclude, by the dominate convergence theorem, that
\[\int_{\Sigma_\delta} \big[ (i\partial_\tm + \Delta) u \overline{v} - u \overline{(i\partial_\tm + \Delta)v} \big] \, = i \int_{\R^n} \big[ 
u(T - \delta, \centerdot) \overline{v (T - \delta,\centerdot)} - u(\delta, \centerdot) \overline{v (\delta,\centerdot)} \big] \,. \]

The limits \eqref{lim:delta} and
\[ \int_{\R^n} \big[ 
u(T, \centerdot) \overline{v (T,\centerdot)} - u(0, \centerdot) \overline{v (0,\centerdot)} \big] = \lim_{\delta \to 0} \int_{\R^n} \big[ 
u(T - \delta, \centerdot) \overline{v (T - \delta,\centerdot)} - u(\delta, \centerdot) \overline{v (\delta,\centerdot)} \big],\]
which follows by continuity, yield the identity in the statement.
\end{proof}

\section{Transference of the exponential decay}
\label{app:transference}

Here we include the proof of the \cref{lem:exp_decay}, which is an adaptation of the proof
of \cite[Lemma 4.6]{10.1063/5.0152372}. 
The \cref{lem:exp_decay} requires some well known properties about holomorphic extensions of the Fourier 
transform. These are all derived from the statement and the proof of
\cite[Theorem 7.4.2]{zbMATH01950198}. In \cite[Lemma 4.7]{10.1063/5.0152372}, we stated and proved  
these facts for the particular cases we needed in that context. Unfortunately, the statement in there is 
different from the one needed here, but its proof is essentially the same. For the sake of 
completeness, we include its proof here.

\begin{lemma}\label{lem:Fourier_hol-extension}\sl Consider $F \in L^1(\R; L^p(\R^n))$
with $p\geq 1$ such that there exists $c > 0$ so that $e^{c|\x|} F \in L^1(\R; L^p(\R^n))$. Then:
\begin{enumerate}[label=\textnormal{(\alph*)}, ref=\textnormal{\alph*}]
\item \label{ite:L1} For every $\nu \in \R^n$ such that 
$|\nu| < c$, we have $e^{\nu \cdot \x} F \in L^1(\R \times \R^n)$.
\item \label{ite:representation} The Fourier transform of $F$ can be extended to 
$\R \times \R^{n - 1} \times \C_{|\Im|<c}$  as the continuous function
\[ \widehat{F} : (\tau, \xi^\prime, \zeta) \mapsto \frac{1}{(2 \pi)^\frac{n+1}{2}} \int_{\R \times \R^n } e^{-i(\tau t + (\xi^\prime, \zeta) \cdot x)} F(t, x) \, \dd (t, x), \]
where $\C_{|\Im|<c} = \{ \zeta \in \C : |\Im \zeta| < c \}$. Furthermore,
it satisfies the relation 
\[ \widehat{e^{\lambda \x_n} F}(\tau, \xi^\prime, \xi_n) = \widehat{F}(\tau, \xi^\prime, \xi_n + i \lambda) \]
for all $(\tau, \xi^\prime, \xi_n) \in \R \times \R^{n-1} \times \R$ and $\lambda \in \R$ such that 
$|\lambda|<c$.
\item \label{ite:holomrphic_z} For every 
$(\tau, \xi^\prime) \in \R \times \R^{n-1}$, the function 
\[ \zeta \in \C_{|\Im|<c} \mapsto \widehat{F}(\tau, \xi^\prime, \zeta) \]
is holomorphic in $\C_{|\Im|<c} $.
\end{enumerate}
\end{lemma}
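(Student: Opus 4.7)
The plan is to handle \eqref{ite:L1} first by a direct H\"older argument, then use it to justify the integral representation in \eqref{ite:representation} via dominated convergence, and finally obtain \eqref{ite:holomrphic_z} by a standard Morera--Fubini argument. The whole lemma is essentially the Paley--Wiener heuristic in a mixed-norm setting, so there is no single decisive obstacle; the only care required is tracking the interplay between the $L^1$ time and $L^p$ space norms together with the exponential weight.

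For \eqref{ite:L1}, given $\nu \in \R^n$ with $|\nu|<c$, I would write $e^{\nu\cdot x} = e^{\nu\cdot x - c|x|}\,e^{c|x|}$ and apply H\"older's inequality in the space variable with conjugate exponents $p$ and $p'$, giving
\[
\| e^{\nu\cdot\x} F(t,\centerdot)\|_{L^1(\R^n)}
\leq \|e^{\nu\cdot\x - c|\x|}\|_{L^{p'}(\R^n)}\,\|e^{c|\x|}F(t,\centerdot)\|_{L^p(\R^n)}.
\]
Since $\nu\cdot x - c|x|\leq (|\nu|-c)|x|$ with $|\nu|-c<0$, the first factor is finite. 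Integrating in $t$ and using the assumption $e^{c|\x|}F\in L^1(\R;L^p(\R^n))$ gives $e^{\nu\cdot\x}F\in L^1(\R\times\R^n)$.

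For \eqref{ite:representation}, for each $\zeta = \xi_n + i\lambda$ with $|\lambda|<c$ we have $|e^{-i\zeta x_n}| = e^{\lambda x_n} \leq e^{|\lambda|\,|x|}$, and applying \eqref{ite:L1} with $\nu=|\lambda|\hat{e}$ for any unit vector $\hat{e}$ shows that the integrand in the proposed extension is absolutely integrable, so the integral is well-defined. The relation $\widehat{e^{\lambda\x_n}F}(\tau,\xi',\xi_n) = \widehat{F}(\tau,\xi',\xi_n+i\lambda)$ is then immediate from the definition. Continuity on $\R\times\R^{n-1}\times\C_{|\Im|<c}$ follows from the dominated convergence theorem: on any compact subset $|\Im\zeta|\leq c'<c$ the integrand is dominated by $e^{c'|\x_n|}|F(t,x)|$, which is integrable by \eqref{ite:L1}, so one can pass limits inside the integral.

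For \eqref{ite:holomrphic_z}, I would apply Morera's theorem. Fix $(\tau,\xi')\in\R\times\R^{n-1}$ and a closed rectifiable contour $\gamma$ contained in $\C_{|\Im|<c'}$ with $c'<c$. The same compact-set dominating function from the continuity step justifies Fubini, allowing one to interchange the $\gamma$-integral with the $(t,x)$-integral:
\[
\oint_\gamma \widehat{F}(\tau,\xi',\zeta)\,\dd\zeta
= \frac{1}{(2\pi)^{(n+1)/2}}\int_{\R\times\R^n} e^{-i(\tau t + \xi'\cdot x')}F(t,x)\Big(\oint_\gamma e^{-i\zeta x_n}\,\dd\zeta\Big)\,\dd(t,x).
\]
Since $\zeta\mapsto e^{-i\zeta x_n}$ is entire, the inner contour integral vanishes for every $x_n$, hence the outer integral vanishes. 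By Morera's theorem together with the continuity established in \eqref{ite:representation}, the map $\zeta\mapsto\widehat{F}(\tau,\xi',\zeta)$ is holomorphic on $\C_{|\Im|<c'}$; letting $c'\uparrow c$ yields holomorphy on the full strip $\C_{|\Im|<c}$.
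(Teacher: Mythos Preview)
Your proof is correct and essentially matches the paper's argument for parts \eqref{ite:L1} and \eqref{ite:representation}. For part \eqref{ite:holomrphic_z} the paper verifies holomorphy via the Cauchy--Riemann equations (differentiating under the integral sign, justified by $\x_n e^{\Im\zeta\,\x_n} F \in L^1$), whereas you use Morera's theorem with Fubini; both are standard and equivalent routes to the same conclusion, so this is not a substantive difference.
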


\begin{proof}
Start by proving \eqref{ite:L1}. Note that
\[ \|e^{\nu \cdot \x} F \|_{L^1 (\R \times \R^n)} \leq \| e^{- ( c - |\nu|) |x|} \|_ {L^{p^\prime}(\R^n)} \| e^{c |\x|} F \|_{L^1(\R; L^p(\R^n))}, \]
where $e^{- ( c - |\nu|) |x|} \in L^{p^\prime}(\R^n) $ since $|\nu| < c$, with $p^\prime$ denoting the
conjugate of $p$.

The property \eqref{ite:representation} 
is a direct consequence of \eqref{ite:L1} since
for $(\tau, \xi^\prime, \zeta) \in \R \times \R^{n - 1} \times \C_{|\Im|<c}$,
we have that
\begin{align*}
\widehat{e^{\Im \zeta e_n \cdot \x} F} (\tau, \xi^\prime, \Re \zeta) & = \frac{1}{(2 \pi)^\frac{n+1}{2}} \int_{\R \times \R^n } e^{-i(\tau t + (\xi^\prime, \Re \zeta) \cdot x)} e^{\Im \zeta e_n \cdot x} F(t, x) \, \dd (t, x) \\
& = \frac{1}{(2 \pi)^\frac{n+1}{2}} \int_{\R \times \R^n } e^{-i(\tau t + (\xi^\prime, \zeta) \cdot x)} F(t, x) \, \dd (t, x).
\end{align*}
Hence, we can use the last expression to extend $\widehat{F}$ to
$\R \times \R^{n - 1} \times \C_{|\Im|<c}$. Furthermore, choosing 
$\zeta = \xi_n + i \lambda$ we have that
\[ \widehat{e^{\lambda \x_n} F} (\tau, \xi^\prime, \xi_n) = \widehat{F} (\tau, \xi^\prime, \xi_n + i \lambda).  \]
This proves the property \eqref{ite:representation}.

Finally, to verify the property \eqref{ite:holomrphic_z} one 
needs to check that the function satisfies the Cauchy--Riemann equations. This 
can be justified since 
$\x_n e^{\Im \zeta e_n \cdot \x_n} F \in L^1(\R \times \R^n)$. 
This completes the proof of \eqref{ite:holomrphic_z}.
\end{proof}

In order to prove the \cref{lem:exp_decay}, we still need a preparatory result, 
which is a consequence of the \cref{lem:Fourier_hol-extension}. Again, this is pretty similar
to \cite[Corollary 4.8]{10.1063/5.0152372}.
The statement there does not include the case we need here, but 
the proof is essentially the same. For convenience for the reader, we include its proof here.

\begin{corollary}\label{cor:vanish}\sl 
Consider $F \in L^q((0,T); L^r(\R^n))$, with $(q, r) \in [1, 2] \times [1, 2]$ satisfying
\eqref{id:Strichartz_indeces}.
Let $G \in L^p(\R; L^r(\R^n))$ with $p \in [1, q]$ denote the function
\begin{equation*}
G (t, \centerdot) =
	\left\{
		\begin{aligned}
		& F(t, \centerdot) & & \textnormal{if} \, t \in (0,T), \\
		& 0 &  & \textnormal{if} \, t \in \R \setminus (0,T).
		\end{aligned}
	\right.
\end{equation*}
If there exists $u \in X^\star$ satisfying
\[
\left\{
		\begin{aligned}
		& (i\partial_\tm + \Delta) u = F & & \textnormal{in} \enspace \Sigma, \\
		& u(0, \centerdot) = u(T, \centerdot) = 0 &  & \textnormal{in} \enspace \R^n;
		\end{aligned}
	\right.
\]
with $e^{c|\x|} F \in L^q((0,T); L^r(\R^n))$ for some $c>0$,
then, for every $(\tau, \xi^\prime) \in \R \times \R^{n-1}$ such that 
$\tau \neq -|\xi^\prime|^2$, the function 
\begin{equation}
\label{map:Gzparoboloid}
\zeta \in \C_{|\Im|<c} \mapsto \frac{\widehat{G}(\tau, \xi^\prime, \zeta)}{-\tau - |\xi^\prime|^2 - \zeta^2}
\end{equation}
is holomorphic in $\C_{|\Im|<c} = \{ \zeta \in \C : |\Im \zeta| < c \}$.
\end{corollary}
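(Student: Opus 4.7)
The plan is to extend $u$ by zero to $\tilde{u}$ on $\R \times \R^n$, derive a Fourier-side identity that forces $\widehat{G}$ to vanish pointwise on the real paraboloid $\{\tau + |\xi|^2 = 0\}$, and then propagate this vanishing via analytic continuation in $\zeta$ to every zero of $-\tau - |\xi'|^2 - \zeta^2$ inside the strip $\C_{|\Im|<c}$. Since $u \in X^\star$ with $u(0,\centerdot) = u(T,\centerdot) = 0$, the zero-extension $\tilde{u}$ belongs to $C(\R; L^2(\R^n))$ with compact support in time, hence to $L^2(\R \times \R^n)$, and the boundary contributions in a standard integration-by-parts against test functions vanish, yielding $(i\partial_\tm + \Delta)\tilde{u} = G$ on $\R \times \R^n$ as tempered distributions. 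Consequently $\widehat{\tilde{u}} \in L^2(\R \times \R^n)$ and
\[ -(\tau + |\xi|^2)\, \widehat{\tilde{u}}(\tau, \xi) = \widehat{G}(\tau, \xi) \qquad \textnormal{a.e.} \]
On the other hand, $e^{c|\x|} F \in L^q((0,T); L^r(\R^n))$ together with Hölder's inequality gives $e^{c|\x|} G \in L^1(\R \times \R^n)$, and since $G$ is also compactly supported in $t$, a direct application of the \cref{lem:Fourier_hol-extension} shows that $\widehat{G}(\tau, \xi', \zeta)$ extends to $\C \times \R^{n-1} \times \C_{|\Im|<c}$ as a continuous function that is jointly holomorphic in $(\tau, \zeta)$.

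The main technical step is then to upgrade the distributional identity to the pointwise vanishing of $\widehat{G}$ on the real paraboloid. Assume for contradiction that $\widehat{G}(\tau_0, \xi_0) \neq 0$ at some point with $\tau_0 + |\xi_0|^2 = 0$. By continuity there is a neighbourhood $U$ of $(\tau_0, \xi_0)$ on which $|\widehat{G}| \geq \varepsilon > 0$; using $\widehat{\tilde{u}} = -\widehat{G}/(\tau + |\xi|^2)$ almost everywhere off the paraboloid, I obtain
\[ \int_U |\widehat{\tilde{u}}|^2 \, \dd\tau\, \dd\xi \geq \varepsilon^2 \int_U \frac{\dd\tau\, \dd\xi}{(\tau + |\xi|^2)^2}. \]
Parametrising $U$ by the normal coordinate $s = \tau + |\xi|^2$ (the paraboloid is smooth with non-vanishing gradient), the right-hand side contains a divergent factor $\int_{-\delta}^{\delta} s^{-2}\, \dd s$, contradicting $\widehat{\tilde{u}} \in L^2(\R \times \R^n)$. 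Hence $\widehat{G} \equiv 0$ on $\{\tau + |\xi|^2 = 0\}$. I expect this bridging of the continuous object $\widehat{G}$ with the $L^2$ distribution $\widehat{\tilde{u}}$ to be the main obstacle.

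Finally, I would propagate the vanishing to complex zeros by analytic continuation. Fix $\xi' \in \R^{n-1}$ and define
\[ \phi(\zeta) = \widehat{G}\bigl(-|\xi'|^2 - \zeta^2,\, \xi',\, \zeta\bigr), \qquad \zeta \in \C_{|\Im|<c}. \]
Joint holomorphicity of $\widehat{G}$ in $(\tau, \zeta)$ together with the entire dependence $\zeta \mapsto -|\xi'|^2 - \zeta^2$ makes $\phi$ holomorphic on the connected strip. The previous step gives $\phi(\xi_n) = 0$ for all $\xi_n \in \R$, so the identity theorem forces $\phi \equiv 0$. Specialising to any $(\tau_0, \xi_0') \in \R \times \R^{n-1}$ with $\tau_0 + |\xi_0'|^2 \neq 0$, each zero $\zeta_\pm \in \C_{|\Im|<c}$ of the polynomial $-\tau_0 - |\xi_0'|^2 - \zeta^2$ satisfies $\tau_0 = -|\xi_0'|^2 - \zeta_\pm^2$ and hence $\widehat{G}(\tau_0, \xi_0', \zeta_\pm) = \phi(\zeta_\pm) = 0$. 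The condition $\tau_0 + |\xi_0'|^2 \neq 0$ guarantees that $\zeta_\pm$ are distinct simple zeros of the denominator, so the singularities of the quotient are removable and the map in \eqref{map:Gzparoboloid} is holomorphic on $\C_{|\Im|<c}$.
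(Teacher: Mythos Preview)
Your proof is correct and follows the same global architecture as the paper: reduce to showing that $\widehat{G}$ vanishes on the real paraboloid $\{\tau + |\xi|^2 = 0\}$, then propagate this via analytic continuation of $\zeta \mapsto \widehat{G}(-|\xi'|^2 - \zeta^2, \xi', \zeta)$ to the complex zeros of the denominator. The genuine difference lies in how the vanishing on the paraboloid is obtained. The paper tests $\widehat{G}(-|\xi|^2, \xi)$ against an arbitrary $\overline{\widehat{\phi}}$, rewrites the pairing as $\int_\Sigma (i\partial_\tm + \Delta)u\,\overline{e^{i\tm\Delta}\phi}$, and invokes the integration-by-parts formula of \cref{lem:integration_by_parts} with $v = e^{i\tm\Delta}\phi$ to conclude directly. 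You instead take the zero-extension $\tilde{u}$, derive $-(\tau + |\xi|^2)\widehat{\tilde{u}} = \widehat{G}$, and argue by contradiction that a continuous $\widehat{G}$ nonzero at a paraboloid point would force $\widehat{\tilde{u}} \notin L^2$ via a divergent $\int s^{-2}\,\dd s$. Your route is slightly more elementary in that it avoids appealing to \cref{lem:integration_by_parts} (integration by parts against $\mathcal{D}$-test functions suffices), and it is in fact the same device---the zero-extension and the Fourier identity $(-\tau - |\xi|^2)\widehat{v} = \widehat{G}$---that the paper itself deploys later in the proof of \cref{lem:exp_decay}. One small remark: \cref{lem:Fourier_hol-extension} as stated only yields holomorphicity in $\zeta$; the extension to complex $\tau$ and the joint holomorphicity you use come from the additional compact support of $G$ in time, which the paper also invokes without citing the lemma.
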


\begin{proof}
Since $G \in L^1(\R; L^r(\R^n))$, we know from \eqref{ite:holomrphic_z} in the 
\cref{lem:Fourier_hol-extension} that, for every 
$(\tau, \xi^\prime) \in \R \times \R^{n-1}$, the function 
\begin{equation}
\label{map:Gz}
\zeta \in \C_{|\Im|<c} \mapsto \widehat{G}(\tau, \xi^\prime, \zeta)
\end{equation}
is holomorphic in $\C_{|\Im|<c} $. While, whenever $\tau \neq - |\xi^\prime|^2$, 
the function 
\[ \zeta \in \C_{|\Im|<c} \mapsto \frac{1}{-\tau - |\xi^\prime|^2 - \zeta^2} \]
is meromorphic, with simple poles at $\zeta_1 = -|\tau + |\xi^\prime|^2 |^{1/2} $ 
and $ \zeta_2 = |\tau + |\xi^\prime|^2 |^{1/2} $ if $- \tau - |\xi^\prime|^2 > 0$, and $\zeta_3 = -i|\tau + |\xi^\prime|^2 |^{1/2} $ and 
$ \zeta_4 = i|\tau + |\xi^\prime|^2 |^{1/2} $ if $- \tau - |\xi^\prime|^2 < 0$
and $|\tau + |\xi^\prime|^2 | < c^2$. 
Then, in order to prove that the function 
\eqref{map:Gzparoboloid} is holomorphic is enough to check that \eqref{map:Gz} vanishes at these poles. Let us focus on proving that.

The item \eqref{ite:L1} in the \cref{lem:Fourier_hol-extension} and the fact that
$\supp G \subset \overline{\Sigma}$ ensure that, for every 
$(\sigma, \nu) \in \R \times \R^n$ such that  $|\nu| < c$, 
we have $e^{\sigma \tm + \nu \cdot \x} G \in L^1(\R \times \R^n)$.
Thus, the Fourier transform of $G$ can be extended to 
$\C \times \R^{n - 1} \times \C_{|\Im|<c}$  as the continuous function
\begin{equation}
\label{map:Ghatext}
\widehat{G} : (\gamma, \xi^\prime, \zeta) \mapsto \frac{1}{(2 \pi)^\frac{n+1}{2}} \int_{\R \times \R^n } e^{-i(\gamma t + (\xi^\prime, \zeta) \cdot x)} G(t, x) \, \dd (t, x).
\end{equation}
One can check that, for every $\xi^\prime \in \R^{n-1}$, the function 
\begin{equation}
\label{map:Gzz}
\zeta \in \C_{|\Im|<c} \mapsto \widehat{G}(-|\xi^\prime|^2 - \zeta^2, \xi^\prime, \zeta)
\end{equation}
is holomorphic in $\C_{|\Im|<c} $. We will see later that
$\widehat{G}(-|\xi|^2, \xi) = 0$ for all $\xi \in \R^n$, that is,
for every $\xi^\prime \in \R^{n-1}$ the function
\eqref{map:Gzz} vanishes on $\{ \zeta \in \C_{|\Im|<c} : \Im \zeta = 0 \}$.
Then, by analytic 
continuation we have that
\[ \widehat{G} (-|\xi^\prime|^2 - \zeta^2, \xi^\prime, \zeta) = 0  \qquad \forall (\xi^\prime, \zeta) \in \R^{n - 1} \times \C_{|\Im| < c},\]
which implies that \eqref{map:Gz} vanishes at the poles 
$\{ \zeta_1, \zeta_2, \zeta_3, \zeta_4\}$.
Indeed, if $- \tau - |\xi^\prime|^2 > 0$ we have that
\[ \widehat{G} (\tau, \xi^\prime, \zeta_j) =  \widehat{G} (-|\xi^\prime|^2 - \zeta_j^2, \xi^\prime, \zeta_j) = 0 \qquad j \in \{ 1, 2 \}; \]
however, $- \tau - |\xi^\prime|^2 < 0$ and $|\tau + |\xi^\prime|^2 | < c^2$
we have that
\[ \widehat{G} (\tau, \xi^\prime, \zeta_j) =  \widehat{G} (-|\xi^\prime|^2 - \zeta_j^2, \xi^\prime, \zeta_j) = 0 \qquad j \in \{ 3, 4 \}. \]
Therefore, the function \eqref{map:Gz} vanishes at the poles.

To conclude the proof of this corollary, we need to show that
$\widehat{G}(-|\xi|^2, \xi) = 0$ for all $\xi \in \R^n$. By the expression
\eqref{map:Ghatext}, we have that
\[ \widehat{G}(-|\xi|^2, \xi) = \frac{1}{(2 \pi)^\frac{n+1}{2}} \int_\Sigma e^{-i(-|\xi|^2 t + \xi \cdot x)} F(t, x) \, \dd (t, x). \]
Since every arbitrary function in $ \mathcal{S}(\R^n)$ can be written as 
$\widehat{\phi}$ with $\phi \in \mathcal{S}(\R^n)$, we have
\[ \int_{\R^n} \widehat{G}(-|\xi|^2, \xi) \overline{\widehat{\phi(\xi)}} \, \dd \xi = \frac{1}{(2 \pi)^\frac{1}{2}} \int_\Sigma (i\partial_\tm + \Delta) u \, \overline{[e^{i\tm \Delta} \phi]} \]
using the fact that $(i\partial_\tm + \Delta) u = F$ in $\Sigma$. The 
\cref{lem:integration_by_parts} and the fact that 
$u(0, \centerdot) = u(T, \centerdot) = 0$ imply that the right-hand side of
the previous identity vanishes. Hence $\widehat{G}(-|\xi|^2, \xi) = 0$ for all 
$\xi \in \R^n$, which concludes this proof.
\end{proof}

Finally, we focus on proving the \cref{lem:exp_decay}, which follows the lines of 
\cite[Lemma 4.6]{10.1063/5.0152372} but using the new inequality of the \cref{th:non-gain_Snu}.

\begin{proof}[Proof of the \cref{lem:exp_decay}]
Start by checking that the right-hand side of the 
inequality in the statement of this lemma is finite:
\begin{equation}
\label{in:exp_decay}
\| e^{\nu \cdot \x} F \|_{L^q ((0, T); L^r(\R^n))} \leq \| e^{-( c - |\nu|) |x|} \|_{L^\infty(\R^n)} \| e^{c|\x|} F \|_{L^q ((0, T); L^r(\R^n))}.
\end{equation}

For $\nu \in \R^n \setminus \{ 0 \}$ and $g \in \mathcal{S}(\R \times \R^n)$,
let us define the multiplier $T_\nu $:
\[T_\nu g (t, x) = \frac{1}{(2 \pi)^\frac{n+1}{2}} \int_{\R \times \R^n} e^{i(t \tau + x \cdot \xi)} \frac{\widehat{g} (\tau, \xi)}{-\tau - (\xi + i \nu)\cdot(\xi + i \nu)} \, \dd (\tau, \xi) \]
for all $(t, x) \in \R \times \R^n$. Note that $T_\nu$ is the Fourier multiplier with symbol
$p_{-\nu} (\tau - |\nu|^2, \xi)^{-1}$ with $p_\nu$ defined as in \eqref{id:symbol}.
Changing variables according to $ \sigma = \tau - |\nu|^2$, we have that
\begin{align*}
& T_\nu g (t, x) = e^{i|\nu|^2 t} S_{-\nu} f (t, x),\\
& f (s, y) = e^{-i|\nu|^2 s} g(s, y);
\end{align*}
where the multiplier $S_\nu$ is defined in \eqref{id:multiplier_definition}.
Using the \cref{th:non-gain_Snu} and the previous identities we know that there is a constant $C > 0$,
that only depends on $n$, $q$ and $r$, such that
\begin{equation}
\label{in:boundedness_Tnu}
\| T_\nu g \|_{L^{q^\prime} (\R; L^{r\prime}(\R^n))} \leq C \| g \|_{L^q (\R; L^r(\R^n))}
\end{equation}
for all $g \in \mathcal{S}(\R \times \R^n)$.

Let $G \in L^p(\R; L^r(\R^n))$ with $p \in [1, q]$ denote the function
\begin{equation*}
G (t, \centerdot) =
	\left\{
		\begin{aligned}
		& F(t, \centerdot) & & \textnormal{if} \, t \in (0,T), \\
		& 0 &  & \textnormal{if} \, t \in \R \setminus (0,T).
		\end{aligned}
	\right.
\end{equation*}
Obviously, $e^{c|\x|} G \in L^p(\R; L^r(\R^n))$ with $p \in [1, q]$.
As a consequence of the inequality \eqref{in:boundedness_Tnu}, $T_\nu$ can be 
extended to define $T_\nu (e^{\nu \cdot \x} G)$ for every 
$\nu \in \R^n \setminus \{ 0 \}$ such that $|\nu| < c$ since
\[\| e^{\nu \cdot \x} G \|_{L^q (\R; L^r(\R^n))} = \| e^{\nu \cdot \x} F \|_{L^q ((0, T); L^r(\R^n))} \]
and \eqref{in:exp_decay} holds. Furthermore,
\begin{equation}
\label{in:TnuExpG}
\| T_\nu (e^{\nu \cdot \x} G) \|_{L^{q^\prime} (\R; L^{r\prime}(\R^n))} \leq C \| e^{\nu \cdot \x} F \|_{L^q ((0, T); L^r(\R^n))}
\end{equation}
for all $\nu \in \R^n \setminus \{ 0 \}$ such that $|\nu| < c$.

For $\nu \in \R^n \setminus \{ 0 \}$ define
\[ \Gamma_\nu = \{ (\tau, \xi) \in \R \times \R^n : \tau = - (\xi + i \nu)\cdot(\xi + i \nu) \}. \]
Since the codimension of $\Gamma_\nu$ is $2$, we have by
the item \eqref{ite:L1} of \cref{lem:Fourier_hol-extension}
that---for every $\phi \in \mathcal{D}(\R \times \R^n)$ and every $\nu \in \R^n \setminus \{ 0 \}$ such 
that $|\nu| < c$---the function
\[ (\tau, \xi) \in (\R \times \R^n) \setminus \Gamma_\nu \mapsto \frac{\widehat{e^{\nu \cdot \x} G} (\tau, \xi)}{- \tau - (\xi + i \nu)\cdot(\xi + i \nu)} \, \widecheck{e^{-\nu \cdot \x} \phi}(\tau, \xi) \]
can be extended to $\R \times \R^n$ so that it represents an element in 
$L^1(\R \times \R^n)$. Here 
\[ \widecheck{e^{-\nu \cdot \x} \phi}(\tau, \xi) = \widehat{e^{-\nu \cdot \x} \phi}(-\tau, -\xi).  \]
Hence, we can write
\begin{align*}
\langle e^{- \nu \cdot \x}  T_\nu (e^{\nu \cdot \x} G), \phi \rangle & = \langle  T_\nu (e^{\nu \cdot \x} G), e^{-\nu \cdot \x} \phi\rangle \\
& = \int_{\R \times \R^n} \frac{\widehat{e^{\nu \cdot \x} G} (\tau, \xi)}{- \tau - (\xi + i \nu)\cdot(\xi + i \nu)} \, \widecheck{e^{-\nu \cdot \x} \phi}(\tau, \xi) \, \dd (\tau, \xi)
\end{align*}
for all $\phi \in \mathcal{D}(\R \times \R^n)$ and 
$\nu \in \R^n \setminus \{ 0 \}$ such that $|\nu| < c$. 
For every $\nu \in \R^n \setminus \{ 0 \}$, 
there exists a $Q \in \Orth(n)$ so 
that $\nu = |\nu| Q e_n$, with $e_n$ the $n^{\rm th}$ element of the standard 
basis of $\R^n$. Let $G_Q$ and $\phi_Q$ denote the functions 
$G_Q (t, x) = G (t, Qx)$ and $\phi_Q (t, x) = \phi (t, Qx)$ for all 
$(t, x) \in \R \times \R^n$. Changing variables, we have that
\[ \langle e^{- \nu \cdot \x}  T_\nu (e^{\nu \cdot \x} G), \phi \rangle = \int_{\R \times \R^n} \frac{\widehat{e^{|\nu| \x_n} G_Q} (\tau, \xi)}{- \tau - (\xi + i |\nu| e_n)\cdot(\xi + i |\nu| e_n)} \, \widecheck{e^{-|\nu| \x_n} \phi_Q}(\tau, \xi) \, \dd (\tau, \xi). \] 
By the item \eqref{ite:representation} of the \cref{lem:Fourier_hol-extension},
we have that
the right-hand side of the previous identity is equal to
\[ \int_{\R \times \R^{n-1} \times \R} \frac{\widehat{G_Q} (\tau, \xi^\prime, \xi_n + i |\nu|)}{- \tau - |\xi^\prime|^2 - (\xi_n + i |\nu|)^2} \, \widecheck{\phi_Q}(\tau, \xi^\prime, \xi_n + i |\nu|) \, \dd (\tau, \xi^\prime, \xi_n). \]
By the Fubini--Tonelli theorem the function
\[ \xi_n \in \R \mapsto \frac{\widehat{G_Q} (\tau, \xi^\prime, \xi_n + i |\nu|)}{- \tau - |\xi^\prime|^2 - (\xi_n + i |\nu|)^2} \, \widecheck{\phi_Q}(\tau, \xi^\prime, \xi_n + i |\nu|) \]
belongs to $L^1(\R)$ for almost every $(\tau, \xi^\prime) \in \R \times \R^{n-1}$,
the function
\[ H_Q^{|\nu|} : (\tau, \xi^\prime) \in \R \times \R^{n-1} \mapsto \int_\R \frac{\widehat{G_Q} (\tau, \xi^\prime, \xi_n + i |\nu|)}{- \tau - |\xi^\prime|^2 - (\xi_n + i |\nu|)^2} \, \widecheck{\phi_Q}(\tau, \xi^\prime, \xi_n + i |\nu|) \, \dd \xi_n \]
belongs to $L^1(\R \times \R^{n - 1})$, and
\[ \langle e^{- \nu \cdot \x}  T_\nu (e^{\nu \cdot \x} G), \phi \rangle = \int_{\R \times \R^{n - 1}} H_Q^{|\nu|}(\tau, \xi^\prime) \, \dd (\tau, \xi^\prime).  \]

Note that, by the \cref{cor:vanish} and the \cref{lem:Fourier_hol-extension}
the function
\begin{equation}
\label{map:GQxin}
\xi_n \in \R \mapsto \frac{\widehat{G_Q} (\tau, \xi^\prime, \xi_n)}{- \tau - |\xi^\prime|^2 - \xi_n^2} \, \widecheck{\phi_Q}(\tau, \xi^\prime, \xi_n)
\end{equation}
belongs to $L^1(\R)$ for all $(\tau, \xi^\prime) \in \R \times \R^{n-1}$ such that
$\tau \neq - |\xi^\prime|^2$. 

Our final goal will be to show that the function
\[H_Q : (\tau, \xi^\prime) \in \R \times \R^{n-1} \mapsto \int_\R \frac{\widehat{G_Q} (\tau, \xi^\prime, \xi_n)}{- \tau - |\xi^\prime|^2 - \xi_n^2} \, \widecheck{\phi_Q}(\tau, \xi^\prime, \xi_n) \, \dd \xi_n\]
satisfies that
\begin{equation}
\label{id:HQ-HQnu}
H_Q (\tau, \xi^\prime) = H_Q^{|\nu|} (\tau, \xi^\prime) \qquad \forall (\tau, \xi^\prime) \in \R \times \R^{n-1} : \tau \neq - |\xi^\prime|^2.
\end{equation}
With this at hand, we have that
\begin{equation}
\label{id:TnuHQ}
\langle e^{- \nu \cdot \x}  T_\nu (e^{\nu \cdot \x} G), \phi \rangle = \int_{\R \times \R^{n - 1}} H_Q (\tau, \xi^\prime) \, \dd (\tau, \xi^\prime). 
\end{equation}
As we will see at a later stage, the function
\begin{equation}
\label{map:GQ}
(\tau, \xi) \in \R \times \R^n \mapsto \frac{\widehat{G_Q} (\tau, \xi)}{- \tau - |\xi|^2} \, \widecheck{\phi_Q}(\tau, \xi)
\end{equation}
belongs to $L^1(\R \times \R^n)$. Hence, by the Fubini--Tonelli theorem we have 
that
\[ \int_{\R \times \R^{n - 1}} H_Q (\tau, \xi^\prime) \, \dd (\tau, \xi^\prime) = \int_{\R \times \R^n} \frac{\widehat{G_Q} (\tau, \xi)}{- \tau - |\xi|^2} \, \widecheck{\phi_Q}(\tau, \xi) \, \dd (\tau, \xi). \]
Using the identity \eqref{id:TnuHQ} and the change of variables that removes the 
orthogonal matrix $Q$, we have
\begin{equation}
\label{id:TnuG}
\langle e^{- \nu \cdot \x}  T_\nu (e^{\nu \cdot \x} G), \phi \rangle = \int_{\R \times \R^n} \frac{\widehat{G} (\tau, \xi)}{- \tau - |\xi|^2} \, \widecheck{\phi}(\tau, \xi) \, \dd (\tau, \xi)
\end{equation}
for all $\phi \in \mathcal{D}(\R \times \R^n)$ and 
$\nu \in \R^n \setminus \{ 0 \}$ such that $|\nu| < c$.

Let $v \in C(\R; L^2(\R^n))$ denote the function
\begin{equation*}
v (t, \centerdot) =
	\left\{
		\begin{aligned}
		& u (t, \centerdot) & & \textnormal{if} \, t \in [0,T], \\
		& 0 &  & \textnormal{if} \, t \in \R \setminus [0,T].
		\end{aligned}
	\right.
\end{equation*}
The continuity follows from the fact that 
$u (0, \centerdot) = u (T, \centerdot) = 0$. Using this,
one can check that
\[ (i\partial_\tm + \Delta) v = G \textnormal{ in } \R \times \R^n. \]
Taking Fourier transform we have that
\begin{equation}
\label{eq:RtimeRn}
(- \tau - |\xi|^2) \widehat{v}(\tau, \xi) = \widehat{G} (\tau, \xi) \enspace \dot{\forall} (\tau, \xi) \in \R \times \R^n.\footnote{The symbol $\dot{\forall}$ stands 'for almost every'.}
\end{equation}
This identity and \eqref{id:TnuG} yield
\[\langle e^{- \nu \cdot \x}  T_\nu (e^{\nu \cdot \x} G), \phi \rangle = \langle v, \phi \rangle \]
for all $\phi \in \mathcal{D}(\R \times \R^n)$, or equivalently
\[\langle T_\nu (e^{\nu \cdot \x} G), \psi \rangle = \langle e^{\nu \cdot \x}  v, \psi \rangle \]
for all $\psi \in \mathcal{D}(\R \times \R^n)$---it is enough to consider 
$\phi = e^{\nu \cdot \x} \psi$ for an arbitrary $\psi$. In particular,
\[ T_\nu (e^{\nu \cdot \x} G)|_\Sigma =  e^{\nu \cdot \x}  u.\]
Then, the inequality \eqref{in:TnuExpG} is the one stated in the 
\cref{lem:exp_decay}.

In order to conclude this proof, it remains to check two points:
the function \eqref{map:GQ} belongs to $L^1(\R \times \R^n)$, and the identity
\eqref{id:HQ-HQnu} holds. Start by showing the first of these points.
It is clear that \eqref{map:GQ} belongs to $L^1(\R \times \R^n)$ if, and only if,
the function
\begin{equation}
\label{map:G}
(\tau, \xi) \in \R \times \R^n \mapsto \frac{\widehat{G} (\tau, \xi)}{- \tau - |\xi|^2} \, \widecheck{\phi}(\tau, \xi)
\end{equation}
also belongs to $L^1(\R \times \R^n)$. In order to check that \eqref{map:G}
belongs to $L^1(\R \times \R^n)$, 
we derive from the identity \eqref{eq:RtimeRn} that
\[ \widehat{v}(\tau, \xi) = \frac{\widehat{G} (\tau, \xi)}{- \tau - |\xi|^2} \]
for almost every $ (\tau, \xi) \in \R \times \R^n$ such that $\tau \neq - |\xi|^2$.
Since the measure of the set 
$\{ (\tau, \xi) \in \R \times \R^n : \tau = - |\xi|^2 \}$ is zero, 
the above identity holds for almost every $ (\tau, \xi) \in \R \times \R^n$.
This together with fact that $v \in L^2(\R \times \R^n)$ ensure that
\eqref{map:G} belongs to $L^1(\R \times \R^n)$.

Finally, let us check the identity \eqref{id:HQ-HQnu}. By the \cref{cor:vanish} 
and the \cref{lem:Fourier_hol-extension}
the function
\[ \zeta \in \C_{|\Im|<c} \mapsto \frac{\widehat{G_Q} (\tau, \xi^\prime, \zeta)}{- \tau - |\xi^\prime|^2 - \zeta^2} \, \widecheck{\phi_Q}(\tau, \xi^\prime, \zeta)\]
is holomorphic. Then, by the Cauchy--Goursat theorem we have that
\begin{align*}
H_Q (\tau, \xi^\prime) &+ \lim_{\rho \to \infty} i \int_0^{|\nu|} \frac{\widehat{G_Q} (\tau, \xi^\prime, \rho + it)}{- \tau - |\xi^\prime|^2 - (\rho + it)^2} \, \widecheck{\phi_Q}(\tau, \xi^\prime, \rho + it) \, \dd t \\
& = H_Q^{|\nu|} (\tau, \xi^\prime) + \lim_{\rho \to \infty} i \int_0^{|\nu|} \frac{\widehat{G_Q} (\tau, \xi^\prime, - \rho + it)}{- \tau - |\xi^\prime|^2 - (-\rho + it)^2} \, \widecheck{\phi_Q}(\tau, \xi^\prime, -\rho + it) \, \dd t 
\end{align*}
for all $(\tau, \xi^\prime) \in \R \times \R^{n-1}$ such that 
$ \tau \neq - |\xi^\prime|^2$. The item \eqref{ite:representation} of the 
\cref{lem:Fourier_hol-extension}, and the decay as $\rho$ tends to infinity,
can be used to show that each of the limits in
the previous identity is zero. Hence, the identity \eqref{id:HQ-HQnu} holds.
This concludes the proof of the \cref{lem:exp_decay}.
\end{proof}

\begin{acknowledgements}
P.C. is supported by the project PID2024-156267NB-I00, as well as BCAM-BERC 
2022-2025 and the BCAM Severo Ochoa CEX2021-001142-S.
\end{acknowledgements}

\bibliography{references}{}

\begin{thebibliography}{10}

\bibitem{zbMATH03998383}
Giovanni {Alessandrini}.
\newblock {Stable determinations of conductivity by boundary measurements.}
\newblock {\em {Appl. Anal.}}, 27(1-3):153--172, 1988.

\bibitem{zbMATH00004861}
Giovanni {Alessandrini}.
\newblock {Singular solutions of elliptic equations and the determination of
  conductivity by boundary measurements.}
\newblock {\em {J. Differ. Equations}}, 84(2):252--272, 1990.

\bibitem{zbMATH05050053}
Kari {Astala} and Lassi {P\"aiv\"arinta}.
\newblock {Calder\'on's inverse conductivity problem in the plane.}
\newblock {\em {Ann. Math. (2)}}, 163(1):265--299, 2006.

\bibitem{zbMATH01649244}
Juan~Antonio {Barcel\'o}, Tomeu {Barcel\'o}, and Alberto {Ruiz}.
\newblock {Stability of the inverse conductivity problem in the plane for less
  regular conductivities.}
\newblock {\em {J. Differ. Equations}}, 173(2):231--270, 2001.

\bibitem{zbMATH05223898}
Tomeu {Barcel\'o}, Daniel {Faraco}, and Alberto {Ruiz}.
\newblock {Stability of Calder\'on inverse conductivity problem in the plane.}
\newblock {\em {J. Math. Pures Appl. (9)}}, 88(6):522--556, 2007.

\bibitem{zbMATH01886353}
Lucie Baudouin and Jean-Pierre Puel.
\newblock Uniqueness and stability in an inverse problem for the
  {Schr{\"o}dinger} equation.
\newblock {\em Inverse Probl.}, 18(6):1537--1554, 2002.

\bibitem{zbMATH06733553}
Mourad Bellassoued.
\newblock Stable determination of coefficients in the dynamical
  {Schr{\"o}dinger} equation in a magnetic field.
\newblock {\em Inverse Probl.}, 33(5):36, 2017.
\newblock Id/No 055009.

\bibitem{zbMATH05549395}
Mourad Bellassoued and Mourad Choulli.
\newblock Logarithmic stability in the dynamical inverse problem for the
  {Schr{\"o}dinger} equation by arbitrary boundary observation.
\newblock {\em J. Math. Pures Appl. (9)}, 91(3):233--255, 2009.

\bibitem{zbMATH05655673}
Mourad Bellassoued and Mourad Choulli.
\newblock Stability estimate for an inverse problem for the magnetic
  {Schr{\"o}dinger} equation from the {Dirichlet}-to-{Neumann} map.
\newblock {\em J. Funct. Anal.}, 258(1):161--195, 2010.

\bibitem{zbMATH05839237}
Mourad Bellassoued and David Dos Santos~Ferreira.
\newblock Stable determination of coefficients in the dynamical anisotropic
  {Schr{\"o}dinger} equation from the {Dirichlet}-to-{Neumann} map.
\newblock {\em Inverse Probl.}, 26(12):30, 2010.
\newblock Id/No 125010.

\bibitem{zbMATH06769718}
Ibtissem Ben~A{\"{\i}}cha.
\newblock Stability estimate for an inverse problem for the {Schr{\"o}dinger}
  equation in a magnetic field with time-dependent coefficient.
\newblock {\em J. Math. Phys.}, 58(7):071508, 21, 2017.

\bibitem{zbMATH06864429}
Ibtissem Ben~A{\"{\i}}cha and Youssef Mejri.
\newblock Simultaneous determination of the magnetic field and the electric
  potential in the {Schr{\"o}dinger} equation by a finite number of boundary
  observations.
\newblock {\em J. Inverse Ill-Posed Probl.}, 26(2):201--209, 2018.

\bibitem{zbMATH00912089}
Russell~M. {Brown}.
\newblock {Global uniqueness in the impedance-imaging problem for less regular
  conductivities.}
\newblock {\em {SIAM J. Math. Anal.}}, 27(4):1049--1056, 1996.

\bibitem{zbMATH02102106}
Russell~M. {Brown} and Rodolfo~H. {Torres}.
\newblock {Uniqueness in the inverse conductivity problem for conductivities
  with $3/2$ derivatives in $L^p$, $p>2n$.}
\newblock {\em {J. Fourier Anal. Appl.}}, 9(6):563--574, 2003.

\bibitem{zbMATH01044195}
Russell~M. {Brown} and Gunther~A. {Uhlmann}.
\newblock {Uniqueness in the inverse conductivity problem for nonsmooth
  conductivities in two dimensions.}
\newblock {\em {Commun. Partial Differ. Equations}}, 22(5-6):1009--1027, 1997.

\bibitem{zbMATH05684831}
Alberto~P. {Calder\'on}.
\newblock {On an inverse boundary value problem.}
\newblock {\em {Comput. Appl. Math.}}, 25(2-3):133--138, 2006.

\bibitem{zbMATH06117512}
Pedro {Caro}, Andoni {Garc{\'\i}a}, and Juan~Manuel {Reyes}.
\newblock {Stability of the Calder\'on problem for less regular
  conductivities.}
\newblock {\em {J. Differ. Equations}}, 254(2):469--492, 2013.

\bibitem{zbMATH08080725}
Pedro Caro, Mar{\'{\i}}a~{\'A}ngeles Garc{\'{\i}}a-Ferrero, and Keith~M.
  Rogers.
\newblock Reconstruction for the {Calder{\'o}n} problem with {Lipschitz}
  conductivities.
\newblock {\em Anal. PDE}, 18(8):2033--2060, 2025.

\bibitem{zbMATH06534426}
Pedro {Caro} and Keith~M. {Rogers}.
\newblock {Global uniqueness for the Calder\'on problem with Lipschitz
  conductivities.}
\newblock {\em {Forum Math. Pi}}, 4:28, 2016.

\bibitem{10.1063/5.0152372}
Pedro Caro and Alberto Ruiz.
\newblock {An inverse problem for data-driven prediction in quantum mechanics}.
\newblock {\em Journal of Mathematical Physics}, 65(1):011505, 01 2024.

\bibitem{10.1088/1361-6420/ae15b8}
Manuel Cañizares, Pedro Caro, Ioannis Parissis, and Thanasis Zacharopoulos.
\newblock The initial-to-final-state inverse problem with time-independent
  potentials.
\newblock {\em Inverse Problems}, 41(11):115001, oct 2025.

\bibitem{zbMATH06516179}
Mourad Choulli, Yavar Kian, and Eric Soccorsi.
\newblock Stable determination of time-dependent scalar potential from boundary
  measurements in a periodic quantum waveguide.
\newblock {\em SIAM J. Math. Anal.}, 47(6):4536--4558, 2015.

\bibitem{zbMATH05704414}
Albert {Clop}, Daniel {Faraco}, and Alberto {Ruiz}.
\newblock {Stability of Calder\'on's inverse conductivity problem in the plane
  for discontinuous conductivities.}
\newblock {\em {Inverse Probl. Imaging}}, 4(1):49--91, 2010.

\bibitem{zbMATH02222227}
Piero D'Ancona, Vittoria Pierfelice, and Nicola Visciglia.
\newblock Some remarks on the {Schr{\"o}dinger} equation with a potential in
  {{\(L^r_t L^s_x\)}}.
\newblock {\em Math. Ann.}, 333(2):271--290, 2005.

\bibitem{zbMATH06156446}
David Dos Santos~Ferreira, Carlos~E. Kenig, and Mikko Salo.
\newblock Determining an unbounded potential from {Cauchy} data in admissible
  geometries.
\newblock {\em Commun. Partial Differ. Equations}, 38(1-3):50--68, 2013.

\bibitem{zbMATH05379127}
Gregory Eskin.
\newblock Inverse problems for the {Schr{\"o}dinger} equations with
  time-dependent electromagnetic potentials and the {Aharonov}-{Bohm} effect.
\newblock {\em J. Math. Phys.}, 49(2):022105, 18, 2008.

\bibitem{zbMATH06659335}
Andoni Garc{\'{\i}}a and Guo Zhang.
\newblock Reconstruction from boundary measurements for less regular
  conductivities.
\newblock {\em Inverse Probl.}, 32(11):22, 2016.
\newblock Id/No 115015.

\bibitem{zbMATH00850580}
Piotr~G. Grinevich and Roman~G. Novikov.
\newblock Transparent potentials at fixed energy in dimension two.
  {Fixed}-energy dispersion relations for the fast decaying potentials.
\newblock {\em Commun. Math. Phys.}, 174(2):409--446, 1995.

\bibitem{zbMATH06490961}
Boaz {Haberman}.
\newblock {Uniqueness in Calder\'on's problem for conductivities with unbounded
  gradient.}
\newblock {\em {Commun. Math. Phys.}}, 340(2):639--659, 2015.

\bibitem{zbMATH06145493}
Boaz {Haberman} and Daniel {Tataru}.
\newblock {Uniqueness in Calder\'on's problem with Lipschitz conductivities.}
\newblock {\em {Duke Math. J.}}, 162(3):497--516, 2013.

\bibitem{zbMATH07373390}
Seheon Ham, Yehyun Kwon, and Sanghyuk Lee.
\newblock Uniqueness in the {Calder{\'o}n} problem and bilinear restriction
  estimates.
\newblock {\em J. Funct. Anal.}, 281(8):58, 2021.
\newblock Id/No 109119.

\bibitem{zbMATH01950198}
Lars H{\"o}rmander.
\newblock {\em The analysis of linear partial differential operators. {I}:
  {Distribution} theory and {Fourier} analysis.}
\newblock Class. Math. Berlin: Springer, reprint of the 2nd edition 1990
  edition, 2003.

\bibitem{zbMATH02204588}
Alexandru~D. {Ionescu} and Carlos~E. {Kenig}.
\newblock {Well-posedness and local smoothing of solutions of Schr\"odinger
  equations}.
\newblock {\em {Math. Res. Lett.}}, 12(2-3):193--205, 2005.

\bibitem{zbMATH01215570}
Markus Keel and Terence Tao.
\newblock Endpoint {Strichartz} estimates.
\newblock {\em Am. J. Math.}, 120(5):955--980, 1998.

\bibitem{zbMATH04050093}
C.~E. {Kenig}, A.~{Ruiz}, and C.~D. {Sogge}.
\newblock {Uniform Sobolev inequalities and unique continuation for second
  order constant coefficient differential operators.}
\newblock {\em {Duke Math. J.}}, 55:329--347, 1987.

\bibitem{zbMATH07033617}
Yavar Kian and Eric Soccorsi.
\newblock H{\"o}lder stably determining the time-dependent electromagnetic
  potential of the {Schr{\"o}dinger} equation.
\newblock {\em SIAM J. Math. Anal.}, 51(2):627--647, 2019.

\bibitem{zbMATH07242805}
Yavar Kian and Alexander Tetlow.
\newblock H{\"o}lder-stable recovery of time-dependent electromagnetic
  potentials appearing in a dynamical anisotropic {Schr{\"o}dinger} equation.
\newblock {\em Inverse Probl. Imaging}, 14(5):819--839, 2020.

\bibitem{zbMATH04050176}
Adrian {Nachman}, John {Sylvester}, and Gunther {Uhlmann}.
\newblock {An n-dimensional Borg-Levinson theorem.}
\newblock {\em {Commun. Math. Phys.}}, 115(4):595--605, 1988.

\bibitem{zbMATH04105476}
Adrian~I. {Nachman}.
\newblock {Reconstructions from boundary measurements.}
\newblock {\em {Ann. Math. (2)}}, 128(3):531--576, 1988.

\bibitem{zbMATH00854849}
Adrian~I. {Nachman}.
\newblock {Global uniqueness for a two-dimensional inverse boundary value
  problem.}
\newblock {\em {Ann. Math. (2)}}, 143(1):71--96, 1996.

\bibitem{zbMATH05013664}
Virginia Naibo and Atanas Stefanov.
\newblock On some {Schr{\"o}dinger} and wave equations with time dependent
  potentials.
\newblock {\em Math. Ann.}, 334(2):325--338, 2006.

\bibitem{zbMATH04103631}
R.~G. Novikov and G.~M. Khenkin.
\newblock The {{\({\bar \partial}\)}}-equation in the multidimensional inverse
  scattering problem.
\newblock {\em Russ. Math. Surv.}, 42(3):109--180, 1987.

\bibitem{zbMATH00554321}
Roman~G. Novikov.
\newblock The inverse scattering problem at fixed energy for the
  three-dimensional {Schr{\"o}dinger} equation with an exponentially decreasing
  potential.
\newblock {\em Commun. Math. Phys.}, 161(3):569--595, 1994.

\bibitem{zbMATH07395052}
Felipe Ponce-Vanegas.
\newblock A bilinear strategy for {Calder{\'o}n}'s problem.
\newblock {\em Rev. Mat. Iberoam.}, 37(6):2119--2160, 2021.

\bibitem{zbMATH04015323}
John {Sylvester} and Gunther {Uhlmann}.
\newblock {A global uniqueness theorem for an inverse boundary value problem.}
\newblock {\em {Ann. Math. (2)}}, 125:153--169, 1987.

\bibitem{zbMATH04028037}
John {Sylvester} and Gunther {Uhlmann}.
\newblock {Inverse boundary value problems at the boundary-continuous
  dependence.}
\newblock {\em {Commun. Pure Appl. Math.}}, 41(2):188--219, 1988.

\bibitem{zbMATH05035890}
Terence {Tao}.
\newblock {\em {Nonlinear dispersive equations. Local and global analysis}},
  volume 106.
\newblock Providence, RI: American Mathematical Society (AMS), 2006.

\bibitem{zbMATH02208390}
Gunther Uhlmann and Andr{\'a}s Vasy.
\newblock Fixed energy inverse problem for exponentially decreasing potentials.
\newblock {\em Methods Appl. Anal.}, 9(2):239--247, 2002.

\end{thebibliography}
\bibliographystyle{plain}

\end{document}